\documentclass{amsart} 
\usepackage{nicefrac}
\usepackage{amssymb, amsmath,mathtools}
\usepackage{mathrsfs}
\usepackage{amscd}
\usepackage{verbatim}
\usepackage{stmaryrd}
\usepackage[dvipsnames]{xcolor}
\usepackage{a4wide} 
\usepackage[sort,numbers,sort&compress]{natbib}

\usepackage{enumitem}

\usepackage[colorlinks,linkcolor={blue},citecolor={blue},urlcolor={purple}]{hyperref}

\usepackage[colorinlistoftodos,prependcaption,textsize=tiny]{todonotes}

\usepackage{forest}
\usepackage{tikz}
\tikzset{>=latex}

\usepackage{tabularx}
\newcolumntype{L}{>{\arraybackslash}X}
\usepackage{multirow}

\theoremstyle{plain}
\newtheorem{theorem}{Theorem}[section]
\theoremstyle{remark}
\newtheorem{remark}[theorem]{Remark}

\theoremstyle{plain}
\newtheorem{corollary}[theorem]{Corollary}
\newtheorem{lemma}[theorem]{Lemma}
\newtheorem{proposition}[theorem]{Proposition}
\newtheorem{definition}[theorem]{Definition}

\newtheorem{assumption}[theorem]{Assumption}

\numberwithin{equation}{section}

\newcommand{\N}{\mathbb{N}}
\newcommand{\Z}{\mathbb{Z}}

\newcommand{\R}{\mathbb{R}}

\newcommand{\Cper}{C(D)}
\newcommand{\ceqq}{\coloneqq}

\newcommand{\col}{\colon}  
\newcommand{\pf}{p_{\kern-.5pt \alpha}}

\newcommand{\E}{{\mathbb E}}
\renewcommand{\P}{{\mathbb P}}
\newcommand{\F}{{\mathscr F}} 
 
\newcommand{\eps}{\varepsilon}

\newcommand{\G}{\mathcal{G}}

\newcommand{\Om}{\Omega} 
\newcommand{\om}{\omega}

\makeatletter
\newcommand{\plus}{%
  \DOTSB\mathop{\mathpalette\mattos@bigplus\relax}\slimits@
}
\newcommand\mattos@bigplus[2]{%
  \vcenter{\hbox{%
    \sbox\z@{$#1\sum$}%
    \resizebox{!}{0.9\dimexpr\ht\z@+\dp\z@}{\raisebox{\depth}{$\m@th#1+$}}%
  }}%
  \vphantom{\sum}%
}
\makeatother

\newcommand{\A}{{\mathcal A}}

\newcommand{\calL}{\mathcal{L}}

\newcommand{\dd}{\,\mathrm{d}}

\usepackage{stmaryrd}

\newcommand{\one}{{{\bf 1}}}

\newcommand{\into}{\hookrightarrow}
\newcommand{\ddd}{\mathrm{d}}

\allowdisplaybreaks

\begin{document}

\author{Michael Salins}
\address[Michael Salins]{Boston University\\United States of America.} 
\email{msalins@bu.edu} 
\author{Esm\'ee Theewis}
\address[Esm\'ee Theewis]{Delft University of Technology\\The Netherlands.}
\email{e.s.theewis@tudelft.nl}

\thanks{The first author was supported by Simons Foundation Grant 962543. The second author was supported by the VICI subsidy VI.C.212.027 of the Netherlands Organisation for Scientific Research (NWO)}   

\date{August 20, 2026}
 
\title[ULDP  for the stochastic heat equation]{Uniform large deviation principles for the stochastic heat equation  
over unbounded sets of initial data}

\keywords{Uniform large deviation principle, stochastic heat equation}

\subjclass[2020]{Primary: 60H15, Secondary: 60F10, 35K58}

\begin{abstract}
We study small-noise large deviations for the stochastic heat equation (SHE) on the torus with unbounded, multiplicative space-time white noise. We establish uniform large deviation principles (ULDPs) over unbounded sets of initial data, alongside novel well-posedness and regularity results.  
The ULDPs are obtained for both  continuous-in-space  and  $L^p$ initial data. 
For these two classes, the ULDP holds uniformly over $L^q$-bounded subsets, with  $q<\infty$ and $q<p$ allowed respectively, so these sets  are highly unbounded in the state space. 
The admissible range of $q$ is dictated by the growth of the noise coefficient in the SHE. 
Crucially, our methods allow us to reach down to $q=1$. This yields ULDPs over $L^1$-bounded subsets, enabling the study of exit times in physical systems with mass conservation. 
\end{abstract}

\maketitle

\section*{Introduction}
 
We consider the following stochastic heat equation (SHE)  
on the one-dimensional torus $D\ceqq \R/2\pi\Z$, for small $\eps>0$:  
\begin{equation}\label{eq:SHE}
\begin{cases}
\frac{\partial X_x^{\eps}}{\partial t}(t, \xi) = \Delta X_x^{\eps}(t, \xi) +b(X_x^{\eps}(t,\xi))+ \sqrt{\eps} \sigma(X_x^{\eps}(t, \xi))\dot{W}(t,\xi), \quad  \xi \in D, t \in(0,T], \\ 
X_x^{\eps}(0, \xi) = x(\xi), \quad\xi\in D,
\end{cases} 
\end{equation} 
where $W$ is a space-time white noise.  
In this paper, we study uniform large deviation principles (ULDPs) for families of solutions $(X^\eps_x)_{\eps,x}$ to \eqref{eq:SHE}, as $\eps\to 0$.  We prove ULDPs in the $C([0,T];L^p(D))$ topology  for $p\geq 2$ and the $C([0,T]\times D)$ topology on the path space of solutions to \eqref{eq:SHE}. 
Unlike previous results for this problem, our main results prove that the ULDP holds uniformly over initial data  that are bounded in $L^q(D)$ norm, for some $q<p$. In this regime, the set $\{x \in L^p(D): \|x\|_{L^q(D)} \leq R\}$ is    unbounded in $L^p(D)$. Previous results mostly prove uniformity over bounded sets or compact sets of initial data \cite{CR04,BDM08}.  

A large deviation principle formally quantifies the  exponential decay rates for probabilities of rare events. In the context of the SHE, large deviation principles allow  us to accurately study the asymptotic behavior of solutions  $X_x^\eps$, for the small-noise limit $\eps\to 0$. Beyond guaranteeing almost sure convergence of $X_x^\eps$ to the deterministic solution $X_x^0$, they   provide the precise rates of decay for probabilities of relevant events. A powerful application of this theory is the Freidlin--Wentzell exit time problem, due to Freidlin and Wentzell \cite{FW3rdedition}. For $G\subset C(D)$ (or $G\subset L^p(D)$), using large deviation principles, one can characterize the exponential
growth rates of the corresponding exit times for the solutions $X_x^\eps$, defined as
\[
\tau^\eps_x\ceqq \inf\{t>0:X_x^\eps(t,\cdot)\notin G\}. 
\]
For these exit time applications, it is crucial to establish an LDP that holds uniformly with respect to the initial data in $G$, i.e.\ a ULDP (see Definition \ref{def:uldp}). We refer the reader to  \cite{FW3rdedition,DZ10} for details on the general proof strategy used in such applications. 

A highly influential development for proving ULDPs for small-noise SPDEs is the weak convergence approach by Budhiraja, Dupuis, and Maroulas \cite{BDM08}, which yields uniformity over compact sets of initial data. This approach has been applied in a large body of literature on LDPs \cite{LDP1,LDP2,LDP3,LDP4,LDP5,LDP6,LDP7,LDP8,LDP9,LDP10,LDP11,LDP12,LDP13,LDP14,LDP15,LDP16,LDP17,LDP18,LDP19,LDP20,LDP21,LDP22,LDP23} and ULDPs \cite{ULDP1,ULDP2,ULDP3,ULDP4}.  However, for the exit problem above, one typically wants to consider sets $G$ with  non-empty interior. Because such sets are non-compact in infinite-dimensional Banach spaces and one needs uniformity over initial data in $G$,   the weak convergence approach cannot be applied directly. 
In \cite{Salins19equivalences}, the first  author showed that the variational principle underlying \cite{BDM08} can nevertheless be used to establish uniform large deviation principles on non-compact, and even unbounded, sets of initial data, provided that the controlled equations satisfy a stronger mode of convergence than weak convergence.

Concerning the SHE and more general reaction-diffusion equations,  the earliest LDP results include \cite{freidlin88,Sowers92,KX96} and treat Lipschitz coefficients $b$ and $\sigma$.  
In \cite{CR04}, Cerrai and R\"ockner relaxed the assumptions of those works by removing Lipschitz
continuity from reaction terms and removing  ellipticity conditions on the multiplicative noise terms, and they were the first to prove a ULDP over supremum-norm-bounded subsets of initial data.  
More recently, the work \cite{Salins21reacdiff} by the first author further relaxed the assumptions of \cite{CR04}  by removing  local Lipschitz
 and polynomial growth conditions for the reaction terms, while establishing a  ULDP that still holds   uniformly over supremum-norm-bounded subsets of initial data, and even uniformly over all initial data in case $\sigma$ is bounded.

 The purpose of the current work is to establish, for \emph{unbounded} noise coefficients $\sigma$, a ULDP that holds uniformly over sets of initial data that are \emph{unbounded} in $C(D)$, i.e.\ sets for which we do not have pointwise-in-space control. 
More precisely, we will prove large deviation principles that hold uniformly over $L^q$-bounded subsets of initial data, where $q<\infty$. We assume that  $b$ and $\sigma$ are Lipschitz and grow (sub)linearly:  
\begin{assumption}\label{asm}
$T\in(0,\infty)$, $b,\sigma\col  \R\to \R$ are Lipschitz continuous, $\lambda\in(0,1]$, and there is a constant $C$ such that for all  $z\in \R$:
\begin{align}\label{eq:growth}
\begin{split}
  &|b(z)|\leq C(1+|z|)\quad \text{ and }\quad |\sigma(z)|\leq C(1+|z|^\lambda).
\end{split}
\end{align}
\end{assumption}
While we anticipate that our results generalize to multi-dimensional systems and non-Lipschitz dissipative drifts, our primary focus in this work is establishing uniformity with respect to unbounded sets.

Our first main result is the following. The precise notions of mild solution and ULDP will be discussed in Section \ref{sec:prelims}.  
 
\begin{theorem}[{ULDP on $C([0,T];C(D))$}]\label{th:C} Let Assumption \ref{asm} hold.  Suppose that $q\in[1,\infty)$ and  
\begin{equation*} 
2\lambda<q, 
\end{equation*}
where $\lambda$ is the growth rate of $\sigma$ from \eqref{asm}. 
Then, the family $\{X_x^{\eps}:\eps>0,x\in C(D)\}$ of mild solutions to \eqref{eq:SHE} satisfies the ULDP on $C([0,T];C(D))$, uniformly over initial data in $L^q(D)$-bounded subsets of $C(D)$, with rate functions $I_x\col C([0,T];C(D))\to [0,+\infty]$ given by 
\begin{align}\label{eq:rate}
\begin{split}
I_x(\varphi)\ceqq \frac12\inf\{\|u&\|_{L^2(0,T;L^2(D))}^2 :     u\in L^2(0,T;L^2(D)),\,\varphi=X_x^{0,u}\}, 
\end{split}
\end{align}
where $\inf\varnothing\ceqq +\infty$ and $X_x^{0,u}$ is the mild solution to 
\begin{equation*} 
\begin{cases}
\frac{\partial X}{\partial t}(t, \xi) = \Delta X(t, \xi) +b(X(t,\xi))+ \sigma(X(t, \xi))u(t,\xi), \quad  \xi \in D, t \in(0,T], \\ 
X(0, \xi) = x(\xi), \quad\xi\in D.
\end{cases} 
\end{equation*}  
\end{theorem}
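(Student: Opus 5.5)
We will deduce the theorem from the abstract criterion of \cite{Salins19equivalences}. That criterion upgrades the Budhiraja--Dupuis--Maroulas weak convergence approach to uniformity over arbitrary (possibly unbounded) sets $\mathcal{E}_0$ of initial data. It suffices to verify the following uniform convergence statement for the controlled equations. For every $N>0$, $\delta>0$ and $R>0$,
\[
\lim_{\eps\to0}\sup_{\|x\|_{L^q}\le R,\,x\in C(D)}\ \sup_{u\in \mathcal{P}_N}\P\Big(\|X_x^{\eps,u}-X_x^{0,u}\|_{C([0,T];C(D))}>\delta\Big)=0,
\]
where $\mathcal{P}_N$ denotes the predictable controls with $\|u\|_{L^2(0,T;L^2(D))}^2\le N$ a.s. Here $X^{\eps,u}_x$ solves \eqref{eq:SHE} with the extra drift $\sigma(X)u$ and noise $\sqrt\eps\sigma(X)\dot W$. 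In addition, the level sets of the skeleton map must be compact, uniformly in $x$ over compact sets. The latter is a deterministic statement. We would prove it by continuity of $u\mapsto X^{0,u}_x$ from weakly compact control balls, using compactness of the heat semigroup for $t>0$.

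The first step is a priori bounds that see only $\|x\|_{L^q}$. By heat kernel smoothing in one dimension, $\|S(t)x\|_{L^\infty}\lesssim t^{-1/(2q)}\|x\|_{L^q}$. So for $t>0$ the solution is bounded in sup norm, but with a singular weight at $t=0$. We would work in the weighted norm $\sup_t t^{\theta}\|X(t)\|_{L^\infty}$ with $\theta=1/(2q)$. We control the stochastic convolution $\sqrt\eps\int_0^t S(t-s)\sigma(X(s))\,dW(s)$ and the controlled term $\int_0^t S(t-s)\sigma(X)u\,ds$ in this norm. The growth $|\sigma(z)|\lesssim 1+|z|^\lambda$ turns the weight into $s^{-2\lambda\theta}$ inside the $L^2$ time integral coming from the white noise, or from Cauchy--Schwarz against $u$. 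This singularity is integrable precisely when $2\lambda\theta<1$ up to the kernel factor. Combining it with the $(t-s)^{-1/2}$-type kernel singularity (Hölder/factorization method) yields the condition $2\lambda<q$. We would obtain moment bounds uniform over $\|x\|_{L^q}\le R$, $u\in\mathcal{P}_N$, $\eps\le1$, via a stopping-time and Gronwall argument in the weighted norm. The linear growth of $b$ is harmless.

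The second step is the convergence itself. We set $Z=X^{\eps,u}_x-X^{0,u}_x$, which satisfies a mild equation with Lipschitz drift differences and the term $\int S(t-s)(\sigma(X^\eps)-\sigma(X^0))u\,ds$, plus the stochastic convolution $\sqrt\eps\,\Gamma^\eps$. The key point is that $Z(0)=0$, so no initial singularity enters $Z$ except through the coefficients. Using the weighted bounds from step one, $\sqrt\eps\,\E\sup_{t}\|\Gamma^\eps(t)\|_{L^\infty}\to0$ uniformly; the factorization method together with the Kolmogorov criterion gives sup-norm control. A Gronwall lemma with the random kernel $\|u(s)\|_{L^2}$ then yields $\|Z\|_{C([0,T];C(D))}\lesssim \sqrt\eps\,\|\Gamma^\eps\|$, with constants depending only on $N,R,T$. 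For Gronwall against $u$, we split $[0,T]$ into finitely many intervals on which $\int\|u\|^2$ is small, the number being bounded by $N$.

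Main obstacle: the uniform sup-norm estimate of the stochastic convolution near $t=0$ when $x$ is only $L^q$-bounded. The integrand $\sigma(X(s))$ can be as large as $s^{-\lambda/q}$ in sup norm. We would first try bounding it in $L^{q/\lambda}$-type spaces, where $\|\sigma(X(s))\|_{L^{q/\lambda}}\lesssim 1+\|X(s)\|_{L^q}^\lambda$ is uniformly bounded. We would then use the $L^r\to L^\infty$ smoothing of the kernel inside the BDG estimate. This is where $2\lambda<q$ should enter sharply, and reaching $q$ near $1$ requires care with $L^{q/\lambda}$ for $q/\lambda$ possibly below $2$.
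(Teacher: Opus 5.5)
Your plan is sound and closes at the right threshold, but it obtains the key uniform estimate differently from the paper. Both arguments reduce, via Theorem \ref{th:uldp suff}(1), to showing $\E\|X^{\eps,u}_x-X^{0,u}_x\|^{\tilde p}_{C([0,T];C(D))}\lesssim\eps^{\tilde p/2}$ uniformly in $x\in B^q_R\cap C(D)$ and $u\in\A_N$.

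\textbf{The paper's route.} It never needs a uniform bound on $X^{\eps,u}_x$ itself. It controls $\sqrt{\eps}Z^{\eps,u}_x$ in sup norm with Theorem \ref{th:stoch conv C}, which only requires $\sigma(X^{\eps,u}_x)\in L^{\tilde p}([0,T]\times D)$ with $\tilde p>6$. It then splits $|X^{\eps,u}_x|^{\lambda\tilde p}\lesssim 1+|X^{\eps,u}_x-X^{0,u}_x|^{\tilde p}+|X^{0,u}_x|^{\lambda\tilde p}$ and absorbs the difference in a Gronwall argument in expectation. The skeleton is bounded deterministically in $L^{\lambda\tilde p}([0,T]\times D)$ (Lemma \ref{lem:conv Linfty}), using $L^q\to L^{\lambda\tilde p}$ smoothing of $S(t)$. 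That smoothing is $\lambda\tilde p$-integrable in time iff $\tilde p<3q/\lambda$, so $2\lambda<q$ is simply nonemptiness of $(6,3q/\lambda)$.

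\textbf{Your route.} You prove moment bounds for the full controlled process in the weighted norm $\sup_t t^{1/(2q)}\|X(t)\|_{L^\infty(D)}$ and redo the factorization with a time-singular integrand. This reaches the same threshold. With $\|\sigma(X(r))\|^2_{L^\infty}\lesssim 1+r^{-\lambda/q}\|X\|_w^{2\lambda}$, two conditions are needed:
\begin{itemize}
\item $(\frac12-2\alpha-\frac{\lambda}{q})\frac p2>-1$, so that $Z_\alpha\in L^p(\Om\times[0,T]\times D)$;
\item $\alpha>\frac{3}{2p}$, so that $G_\alpha$ maps into $C([0,T];C(D))$.
\end{itemize}
Both can be met for some large $p$ iff $2\lambda<q$. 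Your pathwise Gronwall for the difference also works, because $\|G(t-s,\cdot)\|_{L^2(D)}\lesssim(t-s)^{-1/4}$ and $u$ is a.s.\ $L^2$-bounded.

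\textbf{What each buys.} The paper's route is more economical: it needs only a deterministic a.s.\ bound on the skeleton plus an off-the-shelf convolution estimate, within the same $L^{\lambda\tilde p}$ framework that gives well-posedness for $L^q$ data. Yours requires a stochastic a priori bound (a Gronwall argument with an integrable weight $s^{-\gamma}$). In exchange it gives the finer pointwise blow-up rate $t^{-1/(2q)}$ for $X^{\eps,u}_x$.

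\textbf{Minor points.}
\begin{itemize}
\item Compactness of level sets is not an extra hypothesis of the criterion used here; Theorem \ref{th:uldp suff} requires only the uniform convergence in probability.
\item Your worry about $q/\lambda<2$ is moot, since $2\lambda<q$ forces $q/\lambda>2$.
\item It is $|\sigma(X(s))|^2$, not $|\sigma(X(s))|$, that can be of size $s^{-\lambda/q}$.
\end{itemize}
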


\begin{remark}
If $\lambda\in(0,1/2)$, then  Theorem \ref{th:C} establishes the ULDP uniformly over $L^1(D)$-bounded subsets of initial data in $C(D)$, in the notably strong $C([0,T];C(D))$ topology. 
\end{remark} 

In many applications, the solutions to the SHE are nonnegative and the solutions $X^\eps_x(t,\xi)$ represent mass densities of diffusing molecules. In this setting, the total mass of the system at time $t$ is the $L^1$ norm $\int_D X^\eps_x(t,\xi)\dd \xi$. Thus, taking $q=1$, Theorem \ref{th:C}  shows that when $\lambda<1/2$, the solutions $X^\eps_x$ satisfy a ULDP in the uniform topology that is uniform over sets of initial data $x\in C(D)$ with bounded mass. This is a big improvement over previous results that could only prove a ULDP that was uniform over sets of densities bounded in the $L^\infty$ norm. Returning to the motivation discussed above, this improvement paves the way for deriving probabilistic exit time estimates for mass-conserving systems.

Besides initial data in $C(D)$, we will also consider initial data in $L^p(D)$ for $p\in[2,\infty)$ and study uniform large deviations in the  $C([0,T];L^p(D))$ topology.  
In this setting, one expects the ULDP to hold in the $C([0,T];L^p(D))$ topology  uniformly over $L^p(D)$-bounded sets of initial data, and consequently uniformly over $L^q(D)$-bounded sets  for $q\geq p$ (since $L^q(D)\into L^p(D)$). 
We cover this expected regime. The core novelty of the current work, however, lies in establishing the ULDP  for the  regime where $q<p$ and the $L^q(D)$ balls are unbounded in the $L^p(D)$ topology. Our result is as follows. 

\begin{theorem}[{ULDP on $C([0,T];L^p(D))$}]\label{th:Lp}  
Let Assumption \ref{asm} hold. Suppose that $p\in[2,\infty), q\in[1,\infty)$, and  
\begin{equation*} 
\frac{2\lambda p}{p+2}< q,  
\end{equation*}
where $\lambda$ is the growth rate of $\sigma$ from \eqref{asm}. 
Then, the family $\{X_x^{\eps}:\eps>0,x\in L^p(D)\}$ of mild solutions to \eqref{eq:SHE}  satisfies the ULDP on $C([0,T];L^p(D))$, uniformly over initial data in $L^q(D)$-bounded subsets of $L^p(D)$, with rate functions $I_x\col C([0,T];L^p(D))\to [0,+\infty]$ given by the formula \eqref{eq:rate}. 
\end{theorem}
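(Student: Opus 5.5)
The plan is to use the equivalence framework of \cite{Salins19equivalences}. There, the ULDP over a family $\mathcal{A}$ of initial data follows from a uniform Laplace principle. That principle in turn follows from the Budhiraja--Dupuis variational representation, once the following uniform convergence statement is established.

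For every $N>0$, every $\delta>0$ and every $R>0$,
\begin{equation*}
\lim_{\eps\to0}\sup_{\|x\|_{L^q(D)}\le R}\ \sup_{u\in\mathcal{P}_N}\P\Big(\|X^{\eps,u}_x-X^{0,u}_x\|_{C([0,T];L^p(D))}>\delta\Big)=0.
\end{equation*}
Here $\mathcal{P}_N$ denotes the predictable controls with $\|u\|_{L^2(0,T;L^2)}^2\le N$ a.s., and $X^{\eps,u}_x$ solves \eqref{eq:SHE} with the extra drift $\sigma(X)u$. In addition we need that the level sets $\{\varphi: I_x(\varphi)\le s\}$ are compact, uniformly for $x$ in compact sets (or the analogous continuity of $x\mapsto X^{0,u}_x$). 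The first step is therefore well-posedness of the mild controlled equations for $x\in L^p(D)$. This is done by a fixed point argument in $C([0,T];L^p(D))$, using the Lipschitz continuity of $b,\sigma$ and the heat semigroup bound $\|S(t)\|_{L^r\to L^p}\lesssim t^{-\frac12(\frac1r-\frac1p)}$. The term $\int_0^t S(t-s)\sigma(X)u\,ds$ is controlled via $\|\sigma(X)u\|_{L^1}\le\|\sigma(X)\|_{L^2}\|u\|_{L^2}$. The kernel $\|S(t-s)\|_{L^1\to L^p}^2\sim (t-s)^{-(1-1/p)}$ is integrable, so Cauchy--Schwarz in time applies.

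The core step is to show that the difference $Z=X^{\eps,u}_x-X^{0,u}_x$ is small uniformly over $L^q$-balls, although $x$ is unbounded in $L^p$. Write
\[
Z(t)=\int_0^tS(t-s)\big[b(X^\eps)-b(X^0)\big]ds+\int_0^tS(t-s)\big[\sigma(X^\eps)-\sigma(X^0)\big]u\,ds+\sqrt\eps\,\Gamma^\eps(t),
\]
where $\Gamma^\eps(t)=\int_0^tS(t-s)\sigma(X^\eps)\,dW$. The first two terms are Lipschitz in $Z$, and the Gronwall-type argument uses the fact that $u$ has bounded $L^2$ norm; this is done by splitting $[0,T]$ into finitely many random intervals on which $\int\|u\|^2$ is small. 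So it suffices to show $\sqrt\eps\,\sup_{x}\E\|\Gamma^\eps\|_{C([0,T];L^p)}\to0$. By the factorization method and BDG, this reduces to a moment bound on a weighted integral of $\|\sigma(X^\eps(s))\|$ in a suitable norm. The $\sigma$-growth gives $\|\sigma(X^\eps(s))\|_{L^{r}}\lesssim 1+\|X^\eps(s)\|_{L^{\lambda r}}^\lambda$.

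The key a priori estimate is a smoothing bound. One has $\|S(s)x\|_{L^{m}}\lesssim s^{-\frac12(\frac1q-\frac1m)}\|x\|_{L^q}$, and we aim to show
\[
\sup_{\|x\|_{L^q}\le R}\sup_{u\in\mathcal{P}_N}\E\sup_{s\le T}\Big(s^{\gamma}\|X^{\eps,u}_x(s)\|_{L^p}\Big)^k<\infty,\qquad \gamma=\tfrac12\big(\tfrac1q-\tfrac1p\big),
\]
or at least a weighted-in-time bound on $\|X^\eps(s)\|_{L^{\lambda r}}^{\lambda}$ that is integrable against the stochastic convolution kernel. In the stochastic integral, $\sigma(X^\eps)$ enters squared. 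Its singularity at $s=0$ is of order $s^{-\lambda(\frac1q-\frac1m)}$, and it has to be integrated against $(t-s)^{-1/2-\ldots}$ coming from the $L^2\to L^p$ white-noise kernel. Computing the exponents, the condition for integrability becomes $2\lambda p/(p+2)<q$. In the same way, the sup-norm version with $m=\infty$ gives $2\lambda<q$ in Theorem \ref{th:C}. I would prove the weighted estimate by a fixed point / stopping time argument in the weighted space. Sublinear growth $\lambda\le1$ makes the nonlinearity close, and the Lipschitz $b$ and the control term are handled as above. I expect this weighted moment bound, uniform over the unbounded $L^q$-ball, to be the main obstacle, in particular getting the exponent bookkeeping to match exactly $2\lambda p/(p+2)<q$.

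Finally, compactness of level sets and continuity of $x\mapsto X^{0,u}_x$ in $L^p$ follow from the same deterministic estimates, using the compactness of $S(t)$ for $t>0$ together with the weak continuity of $u\mapsto X^{0,u}_x$ on $\{\|u\|^2\le N\}$. Combining these with the uniform convergence statement and \cite{Salins19equivalences} yields the ULDP over $L^q$-bounded subsets of $L^p(D)$ with the rate function \eqref{eq:rate}.
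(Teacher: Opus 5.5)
Your architecture matches the paper's. It consists of the criterion of Theorem~\ref{th:uldp suff}(2), which as used there needs only the uniform convergence in probability, so no separate level-set compactness argument is required. It then splits $X^{\eps,u}_x-X^{0,u}_x$ into two Lipschitz terms plus $\sqrt{\eps}$ times a stochastic convolution, handles the latter by factorization and BDG, and lets heat-semigroup smoothing fix the threshold.

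The gap is the step you yourself call the main obstacle: a bound on the stochastic convolution that is uniform over $x\in B^q_R$, $u\in\A_N$, $\eps\le\eps_0$. You reduce it to a uniform weighted moment estimate for $X^{\eps,u}_x$ itself, and only gesture at a fixed-point/stopping-time argument to prove it. Moreover, the estimate you write first does not reach the stated range when $\lambda<1$. Suppose you control $\|\sigma(X(s))\|_{L^p}\lesssim 1+\|X(s)\|^{\lambda}_{L^{\lambda p}}$ through $s^{\gamma}\|X(s)\|_{L^p}$ with $\gamma=\frac12(\frac1q-\frac1p)$. You then get a singularity $s^{-\lambda\gamma}$. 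The factorization method needs $\|\sigma(X)\|_{L^p}\in L^{\tilde p}(0,T)$ for some $\tilde p>4$, i.e.\ $4\lambda\gamma<1$, i.e.\ $q>\frac{2\lambda p}{p+2\lambda}$. That is strictly stronger than $q>\frac{2\lambda p}{p+2}$. Your fallback of working in $L^{\lambda p}$ is the right norm, since there $S(s)x$ has only the milder singularity $s^{-\frac12(\frac1q-\frac1{\lambda p})}$, but you do not carry it out.

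The missing idea closes this without any uniform-in-$\eps$ bound on $X^{\eps,u}_x$, weights, or stopping times. Start from the bound of Proposition~\ref{prop:stoch conv}, $\E\|Z^{\eps,u}_x\|^{\tilde p}_{C([0,T_1];L^p(D))}\lesssim \E\int_0^{T_1}(1+\|X^{\eps,u}_x(s)\|^{\lambda\tilde p}_{L^{\lambda p}(D)})\dd s$. Split $\|X^{\eps,u}_x\|^{\lambda\tilde p}_{L^{\lambda p}}\lesssim 1+\|X^{\eps,u}_x-X^{0,u}_x\|^{\tilde p}_{L^p}+\|X^{0,u}_x\|^{\lambda\tilde p}_{L^{\lambda p}}$.

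The first term is absorbed by Gronwall applied to $T_1\mapsto\E\|X^{\eps,u}_x-X^{0,u}_x\|^{\tilde p}_{C([0,T_1];L^p(D))}$. The Lipschitz terms already have this Volterra form, because Lemma~\ref{lem:det conv} uses the full bound $\|u\|_{L^2}\le N$ via Cauchy--Schwarz and H\"older in time. So no random partition of $[0,T]$ is needed.

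The second term involves only the deterministic skeleton. Lemma~\ref{lem:I2} bounds it a.s.\ by $M(1+\|x\|^{\lambda\tilde p}_{L^q})$ through a deterministic Gronwall in the unweighted mixed norm $L^{\lambda\tilde p}(0,T;L^{\lambda p}(D))$. Its only input from $x$ is $\int_0^T\|S(s)x\|^{\lambda\tilde p}_{L^{\lambda p}}\dd s\lesssim\|x\|^{\lambda\tilde p}_{L^q}$. The smoothing estimate \eqref{eq:est} gives this for some $\tilde p>4$ precisely when $q>\frac{2\lambda p}{p+2}$.

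Together these yield $\E\|X^{\eps,u}_x-X^{0,u}_x\|^{\tilde p}_{C([0,T];L^p(D))}\le C\eps^{\tilde p/2}$ uniformly over $x\in B^q_R$ and $u\in\A_N$, and Chebyshev's inequality finishes.
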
 
The lower bound for $q$ satisfies $\frac{2\lambda p}{p+2}<\lambda p\leq p$, thus indeed, Theorem \ref{th:Lp} yields an admissible range of $q<p$  for which we have uniform large deviations over $L^q(D)$-bounded -- but  $L^p(D)$-unbounded -- sets of initial data. In particular, we obtain uniformity over $L^1(D)$-bounded initial data for all $p$ if the growth rate $\lambda$ is at most $1/2$, and for a $\lambda$-dependent range of $p$ if $\lambda\in (1/2,1)$:

\begin{remark}
If $\lambda\in(0,1/2]$, then the lower bound for $q$ in Theorem \ref{th:Lp} is automatically satisfied for $q=1$ and any  $p\in[2,\infty)$.  
Thus we have a ULDP on $C([0,T];L^p(D))$ over $L^1(D)$-bounded subsets of initial data in $L^p(D)$ for all $p\in [2,\infty)$. 

If $\lambda\in(1/2,1)$, then we also have the ULDP on $C([0,T];L^p(D))$ with uniformity over $L^1(D)$-bounded subsets of $L^p(D)$, for all $p$ in the non-empty range $[2,\frac{2}{2\lambda-1})$. In particular,  $p=2$ is always admissible.
\end{remark}

We prove the ULDP theorems by studying the convergence of controlled mild solutions. We study
\begin{align*} 
  X_x^{\eps,u}(t,\xi) = [S(t)x](\xi)+V^{\eps,u}_x(t,\xi)+Y_x^{\eps,u}(t,\xi)+\sqrt{\eps}Z_x^{\eps,u}(t,\xi),
\end{align*}
where  
\begin{align*} 
\begin{split}
V^{\eps,u}_x(t,\xi) &=  
  \int_0^t \int_D G(t-s,\xi-\eta)b(X_x^{\eps,u}(s,\eta))\dd \eta\dd s, \\
  Y_x^{\eps,u}(t,\xi) &=  
  \int_0^t \int_D G(t-s,\xi-\eta)\sigma(X_x^{\eps,u}(s,\eta))u(s,\eta)\dd \eta\dd s, \\
  Z_x^{\eps,u}(t,\xi) &=   
  \int_0^t\int_D G(t-s,\xi-\eta)\sigma(X_x^{\eps,u}(s,\eta))W(\ddd\eta\dd s).\end{split}
\end{align*}
Here, $S(t)$ is the heat semigroup and $G(t,\xi)$ is the heat kernel defined in Subsection \ref{sub:heatkernel}. 
A key ingredient in our analysis is the semigroup regularization estimate  
\[\|S(t)x\|_{L^p(D)} \lesssim t^{-\frac{1}{2}(\frac{1}{q} - \frac{1}{p})}\|x\|_{L^q(D)}, \qquad 
 1\leq q< p \leq \infty
.\]
The above time singularity shows that $S(\cdot)x \in L^r([0,T];L^p(D))$ for all $r$ satisfying $\frac{r}{2}(\frac{1}{q} - \frac{1}{p})<1$.
We demonstrate that under the assumptions of Theorem  \ref{th:Lp} or\ \ref{th:C}, the integral terms $V^{\eps,u}_x, Y^{\eps,u}_x$, and $Z^{\eps,u}_x$ all belong to $C([0,T];L^p(D))$ (resp. $C([0,T]\times D)$), and establish important bounds that depend on the $L^q(D)$ norm of the initial data $x$. 
These calculations enable us to prove the ULDP and also yield novel existence and uniqueness results for rough initial data. The latter are presented in Theorems  \ref{th:wpnLp} and \ref{th:wpnC}. Smoothing properties of the SHE with rough initial data have previously been studied in  \cite{CD2014,CD2015,CH2019,CH2023,CCY2024,CK2019,FKN2025,SWZ2026,CX2026}. Our focus differs from  those results because we establish $L^p(D)$ estimates that are uniform over $L^q(D)$-bounded initial data, and we show how the growth rate $\lambda$ of $\sigma$ affects the regularization.

Building on our well-posedness results and the proofs of the ULDP theorems above, we establish in Section \ref{sec:additionalULDPs} additional ULDPs that hold  uniformly over \emph{arbitrary  bounded sets of $L^q(D)$-initial data}. That is, the sets of initial data are no longer assumed to be subsets of  $C(D)$ or $L^p(D)$. Because of this, the solutions no longer take  values in  $C([0,T];C(D))$ or $C([0,T];L^p(D))$, so the ULDP in these topologies is inherently ill defined. However, if we subtract off the irregular semigroup term, we can prove that $(X^\eps_x - S(\cdot)x)$ satisfies a ULDP in the $C([0,T];L^p(D))$ (resp.\ $C([0,T]\times D)$) topology, see Corollary \ref{cor:uldp pure Lq}. As a further consequence,  
we also establish ULDPs for $(X^\eps_x)$ in weaker topologies, such as  $L^{r}( 0,T;L^p(D))$-type topologies, while allowing for initial data belonging only to $L^q(D)$.

The organization of this paper is as follows. 
In Section \ref{sec:prelims}, we review the relevant background theory, specify the solution notion that we adopt, and state  sufficient conditions from \cite{Salins19equivalences} that we will use to prove our ULDP results. 
In Section \ref{sec:wpn estimates}, we establish deterministic and stochastic convolution estimates for the stochastic heat equation and for associated control problems that are relevant for the ULDP. Using these estimates, we establish new well-posedness and regularity results in Theorems \ref{th:wpnLp} and \ref{th:wpnC}.   
In Sections \ref{sec:C} and \ref{sec:Lp}, we prove our main ULDP results (Theorems \ref{th:C} and \ref{th:Lp}) using the estimates of Section \ref{sec:wpn estimates} together with an efficient Gr\"onwall scheme. In Section \ref{sec:additionalULDPs}, we derive further ULDPs based on these results. In particular, we establish ULDPs for initial data belonging only to $L^q(D)$, using alternative topologies.

\subsubsection*{Acknowledgement}
The authors are grateful to Mark Veraar for encouraging and supporting E.T.'s research visit to M.S.\ at Boston University, where this work was carried out. They also thank Boston University for hosting this visit.  

\section{Preliminaries}\label{sec:prelims}

The following notation will be used throughout the paper.
\begin{itemize}
\item The notation `$\lesssim$' means `less than or equal to a constant multiple of'. Subscripts, as in `$\lesssim_{T,p}$', indicate the parameters on which the implicit constant may depend. 
In statements of the form `$\lesssim$ ... a.s.', the constant is uniform with respect to a.e.\ $\om\in\Om$.

Although the measure $|D|=2\pi$ of the torus $D$ is fixed throughout the paper, we retain $|D|$ in such subscripts to emphasize when the finite measure of the domain is used.  
\item $*$ denotes the convolution operator over the domain $D$ for the spatial variable, or over $[0,T]$ for the time variable, as will be clear from the context.  
\item $\diamond$ denotes the stochastic convolution with respect to space-time white noise. 
\end{itemize}  

In this section, we discuss the    theoretical background that is relevant for our ULDP results. From now on, we fix an arbitrary filtered probability space $(\Om,\F,\P,(\F_t))$ satisfying the usual conditions. 

\subsection{Heat semigroup}\label{sub:heatkernel}

Let $G\col[0,\infty)\times D\to \R$ be the periodic heat kernel defined by 
\[
G(t,\xi)\ceqq  \frac{1}{2\pi} + \sum_{k=1}^{\infty} {\frac{1}{\pi}} e^{-|k|^2 t} \cos(k\xi),
\]
and define $S$ through 
\[
[S(t)x](\xi) \ceqq [G(t,\cdot)*x](\xi)=\int_D G(t,\xi-\eta)x(\eta)\dd \eta, \qquad t\in[0,\infty), \xi\in D.
\]
The kernel $G$ satisfies the following (see \cite[Lem.\ 2.1]{Salins25SHE}):  
\begin{align} 
  &\|G(t,\cdot)\|_{L^1(D)}=1,\label{eq:L1}\\
  &\|G(t,\cdot)\|_{L^\infty(D)}\leq C(1+t^{-1/2})\leq C(1+T^{1/2})t^{-1/2} \text{ for all $T\in(0,\infty)$ and $t\in(0,T]$,} \label{eq:Linfty}
\end{align}where $C$ is a constant independent of $T$ and $t$. 

As a consequence of \eqref{eq:L1} and Young's convolution inequality, $S(t)\in \calL(L^p)$ for all $p\in [1,\infty]$. Moreover, one can show that $S$ is a strongly continuous semigroup on $L^p(D)$, and on $C(D)$. For all these domains, we call the resulting semigroup the \emph{heat semigroup} and denote it simply by $S$. 

The kernel bounds above and  interpolation (or H\"older's inequality) yield for all $r\in[1,\infty]$:
\begin{equation}\label{eq:G Lr norm}
    \|G(t,\cdot)\|_{L^r(D)}\leq \|G(t,\cdot)\|_{L^\infty(D)}^{1-\frac1r}\|G(t,\cdot)\|_{L^1(D)}^{\frac1r}
    \lesssim_r (1+T^{\frac12(1-\frac1r)})t^{-\frac12(1-\frac1r)}.
\end{equation}
Combining \eqref{eq:G Lr norm} with Young's convolution inequality, the following smoothing estimate holds for the heat semigroup, for any $T\in(0,\infty)$, $t\in(0,T]$, $1\leq q\leq p_1\leq \infty$ and $x\in L^q(D)$: 
\begin{align}
  \|S(t)x\|_{L^{p_1}(D)} =\|G(t,\cdot)*x\|_{L^{p_1}(D)}  
\leq \|G(t,\cdot)\|_{L^r(D)}\|x\|_{L^q(D)} 
\leq  C_{T,q,p_1} t^{-\frac12(\frac1q-\frac{1}{p_1})}\|x\|_{L^q(D)}, \label{eq:est}
\end{align}
where $r\in[1,\infty]$ is such that $1+\frac{1}{p_1}=\frac1r+\frac1q$.  

Finally, using the semigroup property of $S$ and the fundamental lemma of the calculus of variations for instance, one can show that the kernel $G$ satisfies for all $0\leq s\leq t<\infty$ and  $\xi\in D$: 
\begin{equation}\label{eq:kernel semigrp prop}
  \int_D G(t-s,\xi-\eta)G(s,\eta) \dd\eta = G(t,\xi). 
\end{equation}

\subsection{Space-time white noise}\label{sub:white noise}

We will use both Walsh's definition and the functional analytic definition of space-time white noise. 

Let $T>0$, let $(e_k)$ be an orthonormal basis for $L^2(D)$ and let $(B_k)$ be a sequence of independent standard $(\F_t)$-Brownian motions. 
For any progressively measurable process $\varphi\col \Om\times [0,T]\times D\to \R$ such that $\varphi\in L^2(\Om\times[0,T]\times D)$, we define 
\begin{equation}\label{eq:Walsh int}
\int_0^t\int_D \varphi(s,\eta)W(\ddd\eta\dd s)\ceqq \sum_{k\in \N}\int_0^t\Big(\int_D\varphi(s,\eta)e_k(\eta)\dd \eta\Big)\dd B_k(s), \qquad t\in[0,T],
\end{equation}
where the $\ddd B_k(s)$ integral denotes the It\^o integral. We call $W$ the space-time white noise measure. 
 
We can identify $W$ with an $L^2(D)$-cylindrical Brownian motion $W\in \calL(L^2(0,T;L^2(D)),L^2(\Om))$, where the latter satisfies $W(\one_{(0,t]}\otimes e_k)= B_k(t)$ for all $t\in(0,\infty)$ and $k\in\N$. 

Let $p,\tilde p\in[2,\infty)$ and let $\varphi\col \Om\times[0,T]\times D \to L^p(D)$ be a progressively measurable process such that $\varphi \in L^{\tilde p}(\Omega; L^p(D; L^2(0,T; L^2(D))))$, identifying $\varphi(\om,s,\eta)(\xi)=\varphi(\om)(\xi)(s)(\eta)$.  
We can define the Walsh integral $\int_0^t\int_D\varphi(s,\eta)(\xi)W(\ddd \eta\dd s)$ pointwise in $\xi$ through \eqref{eq:Walsh int} for $t\in[0,T]$. 
Then, the following identity holds a.s.\ in $L^p(D)$ (with respect to $\xi$): 
\[
\int_0^t\int_D\varphi(s,\eta)(\xi)W(\ddd \eta\dd s) =  
\Big[\int_0^t \Phi\dd W\Big](\xi),
\]
where the $\ddd W$ integral is the stochastic integral in the framework of UMD spaces (see \cite{NVW15}), and    
\[
[\Phi(\omega)g](\xi)
\coloneqq
\int_0^T \int_D
\varphi(\om,s,\eta)(\xi)\,g(s,\eta)
\dd \eta\dd s,
\qquad
g\in L^2(0,T;L^2(D)).
\]
By the $\gamma$-Fubini isomorphism (see \cite[(3.1), (3.2)]{NVW15}), $\Phi$ is a random operator such that
\[
\Phi(\omega)
\in
\gamma\bigl(L^2(0,T;L^2(D)),L^p(D)\bigr) 
\] 
for a.e.\ $\om\in\Om$. 
Moreover,
\[
\|\Phi(\omega)\|_{\gamma(L^2(0,T;L^2(D)),L^p(D))}
\simeq_p
\bigg\|
\Big(
\int_0^T \int_D
|\varphi(\om,s,\eta)(\cdot)|^2
\dd \eta\dd s
\Big)^{1/2}
\bigg\|_{L^p(D)}.
\]

The BDG inequality in the  UMD Banach space $L^p(D)$ (see \cite[(5.5)]{NVW15}) and the isomorphism above yield  
\begin{equation}\label{eq:BDG Lp}
 \E \Big[ \sup_{t \in [0,T]} \Big\| \int_0^t \Phi(s)\mathrm{d}W(s) \Big\|_{L^p(D)}^{\tilde p} \Big] \simeq_{p,\tilde p} \E\Big[ \Big(\int_D \Big( \int_0^T \int_D |\varphi(s, \eta)(\xi)|^2 \dd\eta \dd s \Big)^{\frac{p}{2}} \dd \xi\Big)^{\frac{\tilde p}{p}} \Big].
\end{equation}

\subsection{Solution notion}

For $N\in[0,\infty)$, we define
\begin{align}\label{eq:AN}
\begin{split}
\A_N\ceqq \big\{u\in& L^2(\Om\times [0,T]\times D): \\
&\text{ $u$ is progressively measurable and } \P\big(\|u\|_{L^2(0,T;L^2(D))}\leq N\big)=1\big\}. 
\end{split}
\end{align}

For $u\in \A_N$ and $\eps\in[0,\infty)$, we consider the following implicit equation 
\begin{align}\label{eq:split X}
  X_x^{\eps,u}(t,\xi) = [S(t)x](\xi)+V^{\eps,u}_x(t,\xi)+Y_x^{\eps,u}(t,\xi)+\sqrt{\eps}Z_x^{\eps,u}(t,\xi),
\end{align}
where  
\begin{align}\label{eq:split Y Z}
\begin{split}
V^{\eps,u}_x(t,\xi) &=  
  \int_0^t \int_D G(t-s,\xi-\eta)b(X_x^{\eps,u}(s,\eta))\dd \eta\dd s, \\
  Y_x^{\eps,u}(t,\xi) &=  
  \int_0^t \int_D G(t-s,\xi-\eta)\sigma(X_x^{\eps,u}(s,\eta))u(s,\eta)\dd \eta\dd s, \\
  Z_x^{\eps,u}(t,\xi) &=   
  \int_0^t\int_D G(t-s,\xi-\eta)\sigma(X_x^{\eps,u}(s,\eta))W(\ddd\eta\dd s). \end{split}
\end{align}
Here, the $\ddd \eta$ and $\ddd s$ integrals are Lebesgue integrals, and the $W(\ddd\eta\dd s)$ integral is the stochastic integral with respect to the space-time white noise $W$ as defined in Subsection \ref{sub:white noise}. 

Observe that if $u=0$, then the equation for $X_x^{\eps,0}$ corresponds to the mild  formulation of the original stochastic heat equation \eqref{eq:SHE} discussed in the introduction. Furthermore, if $\eps=0$ and $u\in L^2(0,T;L^2(D))$ (so $u$ is $\om$-independent), then the equation for $X_x^{0,u}$ corresponds to  the so-called \emph{skeleton equation} that appears in the rate functions $I_x$ in Theorems \ref{th:C} and \ref{th:Lp} (see \eqref{eq:rate}). 

We define solutions in our framework as follows. 
 
\begin{definition}[Mild solution]\label{def:sol}
Let $\eps,N\in[0,\infty)$, $u\in \A_N$, $x\in L^1(D)$ and let $X_x^{\eps,u}\col \Om\times [0,T]\times D\to \R$ be a progressively measurable  process. 
If a.s., \eqref{eq:split X} and \eqref{eq:split Y Z} are  satisfied for Lebesgue-a.e.\ $t\in[0,T]$ and $\xi\in D$ (in particular, the integrals appearing therein are well-defined), then we call $X_x^{\eps,u}$ a \emph{mild solution to \eqref{eq:split X}}. 

If $u=0$, and $X_x^{\eps,0}$ is a mild solution to \eqref{eq:split X} in the sense above, then we call $X_x^\eps\ceqq X_x^{\eps,0}$ a mild solution to \eqref{eq:SHE}.
\end{definition}

\subsection{ULDP and sufficient criteria}
 
The Freidlin--Wentzell uniform large deviation principle (with speed $\eps$) is defined as follows. 
\begin{definition}[ULDP]\label{def:uldp}
Let $\mathcal{E}$ be a Polish space, completely metrized by a metric $d$. 
Let $(\Omega, \mathcal{F}, \mathbb{P})$ be a probability space.  Let $ \mathcal{E}_0$ be any set, and let $\{X^\eps_x: \eps>0,x\in \mathcal{E}_0\}$ be a family of $\mathcal{E}$-valued random variables. Let $\{I_x : x \in \mathcal{E}_0\}$ be a collection of lower-semicontinuous maps $I_x : \mathcal{E} \to [0,+\infty]$. 
Let $\mathscr{A}$ be a collection of subsets of $\mathcal{E}_0$.  
Then, the family $\{X^\eps_x: \eps>0,x\in \mathcal{E}_0\}$ is said to satisfy a \emph{Freidlin--Wentzell uniform large deviation principle (ULDP) with respect to the rate functions $I_x$ uniformly over $\mathscr{A}$}, if
\begin{itemize}
\item For any $A \in \mathscr{A}$, $s_0 > 0$, and $\delta > 0$,
\[
\liminf_{\eps \to 0} \inf_{x \in A} \inf_{\varphi \in \Phi_x(s_0)} \big(\eps\log \mathbb{P}(d(X^\eps_x, \varphi) < \delta) + I_x(\varphi)\big) \geq 0.
\]
\item For any $A \in \mathscr{A}$, $s_0 > 0$, and $\delta > 0$,
\[
\limsup_{\eps \to 0} \sup_{x \in A} \sup_{s \in [0,s_0]} \big(\eps \log \mathbb{P}(\mathrm{dist}_d(X^\eps_x, \Phi_x(s)) \geq \delta) + s\big) \leq 0,
\]
\end{itemize}
where $\Phi_x(s) \ceqq \{\varphi \in \mathcal{E} : I_x(\varphi) \leq s\}$. 
\end{definition}

From now on, we specialize to $X^\eps_x$ being a solution to the stochastic heat equation \eqref{eq:SHE}, and we will let $\mathcal{E}_0= C(D)$, $\mathcal{E}=C([0,T];C(D))$, or $\mathcal{E}_0= L^p(D)$, $\mathcal{E}=C([0,T];L^p(D))$, respectively. 
In these settings, we will consider large deviation principles that hold uniformly over  families of $L^q$-bounded subsets of $C(D)$ and $L^p(D)$. For  $q\in[1,\infty)$ and $R\in[0,\infty)$, we define 
\begin{align}\label{eq:BqR}
 B_{R}^{q}\ceqq \{x\in L^q(D) :\|x\|_{L^q(D)}\leq R\}. 
\end{align} 
 
The next result follows from \cite[Th.\ 2.13]{Salins19equivalences}. It provides sufficient criteria for the ULDPs stated in Theorems \ref{th:C} and  \ref{th:Lp}, and Corollary \ref{cor:Lp p<2}. We will use these criteria to prove the latter theorems. Here, we denote by $X_x^{\eps,u}$  the unique mild solution that will be provided by Corollary  \ref{cor:wpn classical}.  

\begin{theorem}[{Special case of \cite[Th.\ 2.13]{Salins19equivalences}}]
\label{th:uldp suff}
Let Assumption \ref{asm} hold. Then:
\begin{enumerate}
  \item If $p,q,$ and $\lambda$ are as in Theorem \ref{th:C},   and for all $N,R,\delta\in( 0,\infty)$, 
\[
\lim_{\eps\downarrow 0}\sup_{x\in B_{R}^q \cap C(D)}\sup_{u\in\A_N}\P(\|X_x^{\eps,u}-X_x^{0,u}\|_{C([0,T];\Cper)}>\delta)=0, 
\]
then the family $\{X_x^{\eps,0}:\eps>0,x\in C(D)\}$ satisfies the ULDP uniformly over $\{B_{R}^q\cap C(D):R\in[0,\infty)\}$,  with the rate functions $I_x$  stated in Theorem \ref{th:C}. 
\item  If $p,q,$ and $\lambda$ are as in Theorem \ref{th:Lp} or Corollary  \ref{cor:Lp p<2}, and for all $N,R,\delta\in( 0,\infty)$,  
    \[
\lim_{\eps\downarrow 0}\sup_{x\in B_R^{q}\cap L^p(D)}\sup_{u\in\A_N}\P(\|X_x^{\eps,u}-X_x^{0,u}\|_{C([0,T];L^p(D))}>\delta)=0,  
\] 
then the family $\{X_x^{\eps,0}:\eps>0,x\in L^p(D)\}$ satisfies the ULDP uniformly over $\{B_{R}^{q}\cap L^p(D):R\in[0,\infty) \}$, with the rate functions $I_x$  stated in Theorem \ref{th:Lp}. 
\end{enumerate}
\end{theorem}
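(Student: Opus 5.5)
The plan is to deduce the statement from \cite[Th.\ 2.13]{Salins19equivalences}, the abstract equivalence between the ULDP and uniform convergence in probability of controlled processes, by checking its hypotheses in our setting. In that abstract result one is given a Polish space $\mathcal{E}$ and an index set $\mathcal{E}_0$. For each $\eps\ge0$, $x\in\mathcal{E}_0$ and $u\in\A_N$ one needs a measurable map $\mathcal{G}^\eps_x$ with $X^{\eps,u}_x=\mathcal{G}^\eps_x\big(W+\eps^{-1/2}\int_0^\cdot u\,\dd s\big)$ a.s. For $\eps=0$ one needs a map $\mathcal{G}^0_x$ defined on controls, giving the skeleton solution $X^{0,u}_x$. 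The conclusion is the ULDP with rate $I_x(\varphi)=\frac12\inf\{\|u\|^2:\varphi=\mathcal{G}^0_x(u)\}$, uniformly over a family $\mathscr{A}$ of subsets. The only analytic hypothesis is the uniform convergence in probability $\sup_{x\in A}\sup_{u\in\A_N}\P(d(X^{\eps,u}_x,X^{0,u}_x)>\delta)\to0$ for each $A\in\mathscr{A}$. We take $\mathcal{E}=C([0,T];C(D))$ with $\mathcal{E}_0=C(D)$ in case (1), and $\mathcal{E}=C([0,T];L^p(D))$ with $\mathcal{E}_0=L^p(D)$ in case (2). In both cases $\mathscr{A}=\{B^q_R\cap\mathcal{E}_0\}$.

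\textbf{Step 1: well-posedness and measurable representation.} Using the well-posedness result for controlled equations (Corollary \ref{cor:wpn classical}), for each $x\in\mathcal{E}_0$, $\eps>0$ and $u\in\A_N$ there is a unique mild solution $X^{\eps,u}_x$ of \eqref{eq:split X} with paths in $\mathcal{E}$. For $\eps=0$ and deterministic $u\in L^2(0,T;L^2(D))$ there is a unique skeleton solution $X^{0,u}_x\in\mathcal{E}$.

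We then obtain the representation through Yamada--Watanabe. Strong existence and pathwise uniqueness for the uncontrolled equation give a measurable map $\mathcal{G}^\eps_x$ with $X^\eps_x=\mathcal{G}^\eps_x(W)$. By Girsanov's theorem, $\tilde W=W+\eps^{-1/2}\int_0^\cdot u$ is a cylindrical Brownian motion under an equivalent measure, because $\|u\|_{L^2}\le N$ a.s. makes the Novikov condition hold. Under this measure $\mathcal{G}^\eps_x(\tilde W)$ solves the uncontrolled equation. Uniqueness then identifies $\mathcal{G}^\eps_x(\tilde W)$ with $X^{\eps,u}_x$ under $\P$.

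\textbf{Step 2: the skeleton map.} We need $\mathcal{G}^0_x$ to be well defined, and we need the level sets $\Phi_x(s)$ to be compact, which also gives lower semicontinuity of $I_x$. In the framework of \cite{Salins19equivalences} these follow from continuity of $u\mapsto X^{0,u}_x$ from bounded sets $\{\|u\|\le N\}$ with the weak topology into $\mathcal{E}$. This continuity will come from the deterministic convolution estimates of Section \ref{sec:wpn estimates}. The control term $Y$ defines a compact operator, because the factor $G$ regularizes. Combined with a Gr\"onwall argument for the difference of two skeleton solutions, this upgrades weak convergence of $u_n$ to convergence of $X^{0,u_n}_x$ in $\mathcal{E}$.

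\textbf{Step 3: conclude.} With Steps 1--2 in place, the displayed convergence hypothesis in (1) or (2) is exactly the condition of \cite[Th.\ 2.13]{Salins19equivalences} for $\mathscr{A}=\{B^q_R\cap\mathcal{E}_0\}$. The theorem then yields the ULDP with rate functions \eqref{eq:rate}. The uncontrolled family is recovered by taking $u=0$.

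I expect Step 1 to be the main obstacle. We need Corollary \ref{cor:wpn classical} to supply strong existence and pathwise uniqueness in the right path space for $\mathcal{E}_0$-valued initial data; only then are the measurable maps and the Girsanov identification justified. The rest is a direct transcription of the abstract theorem.
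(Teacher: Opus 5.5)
Your proposal is correct and takes the paper's route. The paper gives no separate argument beyond invoking \cite[Th.\ 2.13]{Salins19equivalences}, using the solutions from Corollary \ref{cor:wpn classical} and measurable solution maps from Yamada--Watanabe (made explicit in the proof of Corollary \ref{cor:uldp pure Lq}); your Girsanov identification of $X_x^{\eps,u}$ with $\G_x^\eps(W+\eps^{-1/2}\int_0^\cdot u\dd s)$ and your weak-to-strong continuity of $u\mapsto X_x^{0,u}$ (giving compact level sets and lower semicontinuity of $I_x$) just spell out details the paper leaves inside that citation.
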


\section{Convolution estimates, existence, uniqueness, and regularity}\label{sec:wpn estimates}

First, in Subsection \ref{sub:convolution estimates}, we prove  deterministic and stochastic convolution results for the heat semigroup, which will then be used for our well-posedness results in Subsection \ref{sub:wpn} and are also essential for the ULDP proofs later on. 

\subsection{Deterministic and stochastic convolution estimates}\label{sub:convolution estimates}

The next lemma demonstrates a regularizing effect -- specifically, an improvement in spatial integrability -- for the mapping $\varphi u\mapsto S*(\varphi u)$. This estimate will serve as a crucial tool for bounding terms $Y_x^{\eps,u}$ and $Y_x^{0,u}$ in our subsequent proofs.

\begin{lemma} 
\label{lem:det conv}
Let $T,N\in(0,\infty)$  and suppose that  
$p_1\in(2,\infty]$, $p_2\in[p_1,\infty]$, or  $p_1=2$,  $p_2 \in [2,\infty)$.   
Let $u\in L^2(0,T;L^2(D))$ be such that $\|u\|_{L^2(0,T;L^2(D))}\leq N$. Let 
\begin{equation}\label{eq:cond tilde p}
    \tilde p\in\Big(\frac{4}{1-\frac{2}{p_1}+\frac{2}{p_2}},\infty\Big)
\end{equation}
and let $\varphi\in L^{\tilde p}(0,T;L^{p_1}(D))$. 
Define 
\[
Y(t,\xi)\ceqq \big[S*[\varphi(\cdot)u(\cdot)]\big](t)(\xi)=\int_0^t \int_{D} G(t-s,\xi-\eta)\varphi(s,\eta)u(s,\eta)d\eta \dd s.\]
Then, it holds that $Y\in C([0,T];L^{p_2}(D))$,   and for any   $t\in [0,T]$,
\[
\|Y(t,\cdot)\|_{L^{p_2}(D)}^{\tilde p}  \lesssim_{{\tilde p}} N^{{\tilde p}}(1+T^{\frac{\tilde p}{2}-1})\int_0^t\|\varphi(s,\cdot)\|_{L^{p_1}(D)}^{{\tilde p}}\dd s.
\]
Moreover, if $p_2=\infty$, then $Y\in C([0,T];C(D))$. 
\end{lemma}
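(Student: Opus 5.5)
The plan is to estimate $\|Y(t,\cdot)\|_{L^{p_2}(D)}$ pointwise in $t$ by Minkowski's integral inequality in time, then apply the smoothing estimate \eqref{eq:est} to the product $\varphi(s)u(s)$ viewed as an element of $L^{r}(D)$ with $\frac1r=\frac1{p_1}+\frac12$. Hölder in space gives $\|\varphi(s)u(s)\|_{L^r(D)}\le \|\varphi(s)\|_{L^{p_1}(D)}\|u(s)\|_{L^2(D)}$, and $r\in[1,2]$ whenever $p_1\ge2$. Since $p_2\ge p_1\ge r$, \eqref{eq:est} yields
\[
\|Y(t)\|_{L^{p_2}(D)}\lesssim \int_0^t (t-s)^{-\beta}\|\varphi(s)\|_{L^{p_1}(D)}\|u(s)\|_{L^2(D)}\dd s,\qquad \beta=\tfrac12\Big(\tfrac12+\tfrac1{p_1}-\tfrac1{p_2}\Big).
\]
The constant here carries a factor like $1+T^{\beta}$. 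In the endpoint case $p_1=2$, $p_2<\infty$, we have $r=1$, and we need $\beta<\frac12$, i.e.\ $p_2<\infty$. This is exactly why that case is excluded at $p_2=\infty$.

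Next apply Hölder in time with three factors: $\|u(s)\|_{L^2}$ in $L^2(0,t)$, $\|\varphi(s)\|_{L^{p_1}}$ in $L^{\tilde p}(0,t)$, and $(t-s)^{-\beta}$ in $L^{m}(0,t)$ with $\frac1m=\frac12-\frac1{\tilde p}$. The kernel factor is finite iff $\beta m<1$, i.e.\ $\beta<\frac12-\frac1{\tilde p}$. This is equivalent to $\frac1{\tilde p}<\frac14(1-\frac2{p_1}+\frac2{p_2})$, which is exactly \eqref{eq:cond tilde p}. The resulting power of $t$ is $t^{(1-\beta m)/m}$, bounded by a power of $T$. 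Raising to the power $\tilde p$ gives the bound $N^{\tilde p}\,C(T)\int_0^t\|\varphi\|_{L^{p_1}}^{\tilde p}\dd s$. One then checks that the $T$-dependence can be absorbed into $1+T^{\frac{\tilde p}{2}-1}$. For $T\le1$ all the $T$-powers are at most $1$. For $T\ge1$ the combined exponent $\tilde p\big(\tfrac1m-\beta\big)+\tilde p\beta$ equals $\tilde p\big(\tfrac12-\tfrac1{\tilde p}\big)=\tfrac{\tilde p}2-1$. So the exponent from Hölder together with the $T^\beta$ from \eqref{eq:Linfty}-type constants gives exactly $T^{\frac{\tilde p}{2}-1}$, up to the bookkeeping of using $1+t^{-1/2}\le (1+T^{1/2})t^{-1/2}$.

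For continuity in $C([0,T];L^{p_2}(D))$, which I expect to be the main technical step, I would argue by density. If $\varphi$ and $u$ are smooth, or simple and bounded, then $Y$ is continuous into $L^{p_2}$ (resp.\ $C(D)$) by standard semigroup arguments: $S$ is strongly continuous on $L^{p_2}$ and on $C(D)$, and the integrand is bounded. For general $\varphi\in L^{\tilde p}(0,T;L^{p_1})$ and $u\in L^2(0,T;L^2)$, approximate by such $\varphi_n,u_n$. The estimate above is bilinear, so it gives
\[
\sup_t\|Y-Y_n\|_{L^{p_2}}\lesssim \|u-u_n\|_{L^2L^2}\|\varphi\|_{L^{\tilde p}L^{p_1}}+\|u_n\|_{L^2L^2}\|\varphi-\varphi_n\|_{L^{\tilde p}L^{p_1}}\to0.
\]
This is a uniform limit of continuous functions, so $Y$ is continuous. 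When $p_2=\infty$, the approximants lie in $C([0,T];C(D))$, and uniform convergence in the sup norm keeps the limit in the closed subspace $C([0,T];C(D))$.
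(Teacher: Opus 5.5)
Your proof is correct and is essentially the paper's argument. Both proofs bound $Y(t)$ at fixed time by a singular kernel coming from the heat-kernel $L^\rho$ estimates, then apply Hölder in time, which produces exactly the threshold \eqref{eq:cond tilde p} and the factor $1+T^{\frac{\tilde p}{2}-1}$. Continuity then follows from the bilinear estimate plus density.

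The only difference is the order in which $u$ is split off. The paper first applies Cauchy--Schwarz jointly in $(s,\eta)$, obtaining $|Y(t,\xi)|\le N\bigl(\int_0^t\int_D G^2(t-s,\xi-\eta)|\varphi(s,\eta)|^2\,\mathrm{d}\eta\,\mathrm{d}s\bigr)^{1/2}$, and then uses Young with $G^2$. You instead use Hölder in space and \eqref{eq:est} with $\frac1r=\frac12+\frac1{p_1}$, followed by a three-factor Hölder in time. Applying Cauchy--Schwarz in $s$ to your bound gives exactly the paper's intermediate inequality $\|Y(t)\|_{L^{p_2}(D)}^2\lesssim N^2\int_0^t(t-s)^{-2\beta}\|\varphi(s)\|_{L^{p_1}(D)}^2\,\mathrm{d}s$, so the two routes are equivalent.

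One small point in your density step needs repair, in the case $p_1=p_2=\infty$.
\begin{itemize}
\item Smooth functions are not dense in $L^{\tilde p}(0,T;L^\infty(D))$: they lie in the closed proper subspace $L^{\tilde p}(0,T;C(D))$.
\item If you use simple $L^\infty(D)$-valued approximants instead, the integrand is not $C(D)$-valued. Strong continuity of $S$ on $C(D)$ then does not directly give $Y_n\in C([0,T];C(D))$.
\end{itemize}
The paper's fix works in your framework as well. Since \eqref{eq:cond tilde p} is an open condition and $D$ is bounded, replace $p_1=\infty$ by a large finite $p_1'$ with $\tilde p>4/(1-2/p_1')$. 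Then $\varphi\in L^{\tilde p}(0,T;L^{p_1'}(D))$, and you can approximate there by continuous functions and apply your estimate with $(p_1',\infty)$.
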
  
\begin{proof}

First, by the Cauchy--Schwarz inequality in $L^2(0,T;L^2(D))$, we have for all $t\in[0,T]$ and $\xi\in D$:
\[
|Y(t,\xi)|\leq\int_0^t \int_D |G(t-s,\xi-\eta)\varphi(s,\eta)u(s,\eta)|\dd\eta \dd s
\leq N\Big(\int_0^t \int_D G^2(t-s,\xi-\eta)|\varphi(s,\eta)|^2\dd\eta \dd s\Big)^{\frac12}.
\]

If $p_1\in[2,\infty)$, then we apply Minkowski's inequality, followed by Young's convolution inequality (with powers $\frac{p_2}{2}, \frac{p_1}{2},$ and 
$\rho\ceqq \frac{1}{1+\frac{2}{p_2}-\frac{2}{p_1}}$) and the bound \eqref{eq:G Lr norm} for $G$, giving
\begin{align*}
   \|Y(t,\cdot)\|_{L^{p_2}(D)}^2  &=\|Y(t,\cdot)^2\|_{L^{\frac{p_2}{2}}(D)}\\
   &\leq N^2 \Big\|\xi\mapsto\int_0^t  \big[G^2(t-s,\cdot)*|\varphi(s,\cdot)|^2\big](\xi)  \dd s\Big\|_{L^{\frac{p_2}{2}}(D)} \\
   &\leq N^2  \int_0^t \|G^2(t-s,\cdot)*|\varphi(s,\cdot)|^2\|_{L^{\frac{p_2}{2}}(D)}\dd s \\
   &\leq  N^2 \int_0^t   \|G(t-s,\cdot)\|_{L^{2\rho}(D)}^2\||\varphi(s,\cdot)|^2\|_{L^{\frac{p_1}{2}}(D)}\dd s\\ 
   &\lesssim N^2 (1+T^{\frac12+\frac{1}{p_1}-\frac{1}{p_2}}) \int_0^t (t-s)^{-\frac12-\frac{1}{p_1}+\frac{1}{p_2}} \|\varphi(s,\cdot)\|_{L^{p_1}(D)}^{2 } \dd s. 
\end{align*} 
Note that $-\frac12-\frac{1}{p_1}+\frac{1}{p_2}>-1$ by the assumed ranges of $p_1$ and $p_2$. 

Consequently, putting $\theta\ceqq \frac12+\frac{1}{p_1}-\frac{1}{p_2}$ and applying H\"older's inequality with $r\ceqq \frac{\tilde p}{\tilde p-2}, r'=\frac{\tilde p}{2}$, we have  for all  $t\in[0,T]$:
\begin{align}
  \|Y(t,\cdot)\|_{L^{p_2}(D)}^{{\tilde p}}   
&\lesssim N^{{\tilde p}}(1+T^{\theta})^{\frac{\tilde p}{2}} \Big(\int_0^t(t-s)^{-\theta}\|\varphi(s,\cdot)\|_{L^{p_1}(D)}^2\dd s\Big)^{\frac{\tilde p}{2}} \notag\\ 
  &\leq N^{{\tilde p}}(1+T^{\theta})^{\frac{\tilde p}{2}} \Big(\int_0^t(t-s)^{-\theta r}\dd s\Big)^{\frac{1}{r}\frac{\tilde p}{2}} \Big(\int_0^t\|\varphi(s,\cdot)\|_{L^{p_1}(D)}^{\tilde p}\dd s\Big)  \notag\\ 
 &\lesssim_{{\tilde p}} N^{{\tilde p}}(1+T^{\theta})^{\frac{\tilde p}{2}}T^{\frac{\tilde p(1-\theta)-2}{2}}\int_0^t\|\varphi(s,\cdot)\|_{L^{p_1}(D)}^{{\tilde p}}\dd s\notag\\
 &\lesssim_{\tilde p} N^{{\tilde p}}(1+T^{\frac{\tilde p}{2}-1})\int_0^t\|\varphi(s,\cdot)\|_{L^{p_1}(D)}^{{\tilde p}}\dd s, \label{eq:Yestreg}
\end{align} 
where we used that $- \theta r>-1$ if and only if  
$$
\tilde p>\frac{2}{1-\theta}=\frac{4}{1-\frac{2}{p_1}+\frac{2}{p_2}},
$$
i.e.\ if and only if \eqref{eq:cond tilde p} holds. 

If $p_1=\infty$, then $p_2=\infty$ and the above still applies with $\frac{\infty}{2}=\infty$ and $\rho=1$. 
Thus, in either case, we conclude that $Y\in L^\infty(0,T;L^{p_2}(D))$ and the claimed bound holds. 
 
Next, we prove that $Y\in C([0,T];L^{p_2}(D))$ if $p_2<\infty$, by approximating $u$ and $\varphi$. 
Assume that $u\in L^\infty([0,T]\times D)$ and $\varphi\in L^{\tilde p}(0,T;L^{p_2}(D))$. 
Then, $\varphi(\cdot)u(\cdot)\in L^{\tilde p}(0,T;L^{p_2}(D))$ and $S$ is a strongly continuous semigroup on $L^{p_2}(D)$, so by classical semigroup theory we have $Y=S*(\varphi u)\in C([0,T];L^{p_2}(D))$. Consequently, the following bilinear operator is well-defined:
\[
\Psi \col L^\infty([0,T]\times D)\times L^{\tilde p}(0,T;L^{p_2}(D))\to C([0,T];L^{p_2}(D))\col (u,\varphi)\mapsto S*(\varphi u).
\]
By the bound \eqref{eq:Yestreg} (take $N=\|u\|_{L^2(0,T;L^2(D))}$),  we have 
\begin{equation}\label{eq:skel conv cont}
\|\Psi (u,\varphi)\|_{C([0,T];L^{p_2}(D))}\lesssim_{\tilde p,T} \|u\|_{L^2(0,T;L^2(D))}\|\varphi\|_{L^{\tilde p}(0,T;L^{p_1}(D))}.
\end{equation}
Thus, by density of  $L^\infty([0,T]\times D)$ in $L^2(0,T;L^2(D))$ and $L^{\tilde p}(0,T;L^{p_2}(D))$ in $L^{\tilde p}(0,T;L^{p_1}(D))$, $\Psi$ extends to a bounded bilinear operator on $L^2(0,T;L^2(D))\times L^{\tilde p}(0,T;L^{p_1}(D))$, proving the claimed continuity for the case $p_2<\infty$. 

It remains to prove that $Y \in C([0,T]; C(D))$ when $p_2 = \infty$. Since $D$ is  bounded, we may assume without loss of generality that $p_1 < \infty$ while \eqref{eq:cond tilde p} is still satisfied and $\varphi\in L^{\tilde p}(0,T;L^{p_1}(D))$. 
For $u,\varphi \in C([0,T]; C(D))$, we have $\Psi(u,\varphi)= S*(\varphi u) \in C([0,T];C(D))$ by classical strongly continuous semigroup theory. 
Then, from  \eqref{eq:Yestreg} and density of  $C([0,T];C(D))$  in $L^2(0,T; L^2(D))$ and $L^{\tilde p}(0,T; L^{p_1}(D))$, it follows that $\Psi$ extends uniquely to a bounded bilinear map $L^2(0,T;L^2(D))\times L^{\tilde p}(0,T;L^{p_1}(D))\to C([0,T]; C(D))$, completing the proof. 
\end{proof}

Next, we prove a $C([0,T];L^p(D))$-norm moment estimate for stochastic convolutions with the heat semigroup. Here, we denote by $\diamond$ the stochastic convolution in time with respect to the space-time white noise $W$, viewed as an $L^2(D)$-cylindrical Brownian motion (see Subsection \ref{sub:white noise}). 

\begin{proposition}\label{prop:stoch conv}  
Let $T\in(0,\infty)$, $p\in[2,\infty)$ and $\tilde p\in(4,\infty)$. Let $\Phi\col (0,T)\times \Om\to L^p(D)$ be a progressively measurable process such that 
$\E[\int_0^{T}\|\Phi(s)\|_{L^p(D)}^{\tilde p}\dd s]<\infty$. 
Let $$Z(t)\ceqq [S\diamond\Phi](t)= \textstyle{\int_0^t} S(t-s)\Phi(s)\dd W(s),$$ 
where $S$ is the heat semigroup on $L^p(D)$. 
Then, $Z\in L^{\tilde p}(\Om;C([0,T];L^p(D)))$,  and for all $T_1\in(0,T]$,  
\[
\E\big[\|Z\|_{C([0,T_1];L^{p}(D))}^{{\tilde p}}\big]\lesssim_{p,\tilde p}(1+T^{\frac{\tilde p}{4}})T^{\frac{\tilde p}{4}-1}\E\Big[\int_0^{T_1}\|\Phi(s)\|_{L^p(D)}^{\tilde p}\dd s\Big]. 
\] 
\end{proposition}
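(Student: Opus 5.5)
The supremum over $t$ is the obstacle here: $t\mapsto\int_0^t S(t-s)\Phi(s)\dd W(s)$ is not a martingale, so the BDG inequality \eqref{eq:BDG Lp} cannot be applied to $Z$ directly. I would therefore use the Da Prato--Kwapie\'n--Zabczyk factorization method. Fix $\alpha\in(0,\frac14)$ with $\alpha>\frac{1}{\tilde p}$; this is possible exactly because $\tilde p>4$. Using the identity $\int_s^t (t-r)^{\alpha-1}(r-s)^{-\alpha}\dd r=\frac{\pi}{\sin \pi\alpha}$ and the semigroup property, write
\[
Z(t)=\frac{\sin\pi\alpha}{\pi}\int_0^t (t-r)^{\alpha-1}S(t-r)Z_\alpha(r)\dd r,\qquad Z_\alpha(r)\ceqq \int_0^r (r-s)^{-\alpha}S(r-s)\Phi(s)\dd W(s).
\]
This identity holds a.s. for each $t$ by the stochastic Fubini theorem, which is justified once the moment bound on $Z_\alpha$ below is in hand.

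\textbf{Moment bound on $Z_\alpha$.} Fix $r$. Then $Z_\alpha(r)$ is the terminal value of the martingale $\rho\mapsto\int_0^\rho (r-s)^{-\alpha}S(r-s)\Phi(s)\dd W(s)$, so \eqref{eq:BDG Lp} applies with integrand $\varphi(s,\eta)(\xi)=(r-s)^{-\alpha}G(r-s,\xi-\eta)\Phi(s)(\eta)$.
\begin{itemize}
\item Pull the $L^{p/2}(D)$ norm inside the $\dd s$ integral by Minkowski.
\item Apply Young's inequality as in the proof of Lemma \ref{lem:det conv}, with $p_1=p_2=p$, so $\rho=1$. This gives $\|G(r-s)^2*|\Phi(s)|^2\|_{L^{p/2}}\le \|G(r-s)\|_{L^2}^2\|\Phi(s)\|_{L^p}^2$.
\item By \eqref{eq:G Lr norm}, $\|G(r-s)\|_{L^2}^2\lesssim (1+T^{1/2})(r-s)^{-1/2}$.
\end{itemize}
Combining these,
\[
\E\|Z_\alpha(r)\|_{L^p}^{\tilde p}\lesssim \E\Big(\int_0^r (r-s)^{-2\alpha-\frac12}(1+T^{\frac12})\|\Phi(s)\|_{L^p}^2\dd s\Big)^{\tilde p/2}.
\]
Next apply H\"older in $s$ with exponents $\frac{\tilde p}{\tilde p-2}$ and $\frac{\tilde p}{2}$. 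The singular factor is integrable because $(2\alpha+\frac12)\frac{\tilde p}{\tilde p-2}<1$, which is equivalent to $\alpha<\frac14-\frac1{\tilde p}$. So $\alpha$ must be chosen in $(\frac1{\tilde p},\frac14-\frac1{\tilde p})$. This interval is nonempty only for $\tilde p>8$.

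For $\tilde p\in(4,8]$ I would first try the time singularity without the $(r-s)^{-\alpha}$ weight, which gives the threshold $\tilde p>4$. This suggests two fixes:
\begin{itemize}
\item handle the time supremum instead by a Kolmogorov-type argument, or
\item apply the factorization with the smoothing placed differently: split $\|G\|_{L^2}^2$ so as to use only $\|G(r-s)\|_{L^\infty}^{1-\frac1p}$-type bounds, in $L^p$ via BDG and then the Young/H\"older step.
\end{itemize}
The cleanest alternative is the sharper form of the factorization: estimate $\sup_t\|\int_0^t (t-r)^{\alpha-1}S(t-r)Z_\alpha(r)\dd r\|$ by H\"older with $\alpha>1/\tilde p$, and bound $\E\int_0^T\|Z_\alpha(r)\|^{\tilde p}\dd r$ using Young's convolution inequality in time rather than pointwise H\"older. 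With $k(s)=s^{-2\alpha-1/2}$ one needs only $k\in L^1$, i.e.\ $\alpha<\frac14$, and Young in time gives
\[
\int_0^{T_1}\Big(\int_0^r k(r-s)\|\Phi(s)\|^2\dd s\Big)^{\tilde p/2}\dd r\le \|k\|_{L^1(0,T)}^{\tilde p/2}\int_0^{T_1}\|\Phi\|^{\tilde p}\dd s.
\]
This yields the whole range $\tilde p>4$ with $\alpha\in(\frac1{\tilde p},\frac14)$. This is the main delicate step, and I would commit to it.

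\textbf{Conclusion.} H\"older in $r$ gives
\[
\sup_{t\le T_1}\|Z(t)\|_{L^p}^{\tilde p}\lesssim \Big(\int_0^T r^{(\alpha-1)\frac{\tilde p}{\tilde p-1}}\dd r\Big)^{\tilde p-1}\int_0^{T_1}\|Z_\alpha(r)\|_{L^p}^{\tilde p}\dd r,
\]
using $\|S(t)\|_{\calL(L^p)}\le1$. Continuity of $Z$ follows because the map $f\mapsto\int_0^\cdot (\cdot-r)^{\alpha-1}S(\cdot-r)f(r)\dd r$ sends $L^{\tilde p}(0,T;L^p)$ into $C([0,T];L^p)$ when $\alpha>1/\tilde p$. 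To see this, check it on step functions, where strong continuity of $S$ applies, and then pass to the limit using the uniform bound. Finally, collect the powers of $T$ by choosing $\alpha$ depending only on $\tilde p$; this gives the stated factor $(1+T^{\tilde p/4})T^{\tilde p/4-1}$, up to constants depending on $p,\tilde p$.
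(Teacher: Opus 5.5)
Your proposal is correct. Once you commit to it, it follows the paper's proof essentially step for step: factorization with $\alpha\in(1/\tilde p,\frac14)$, the BDG inequality \eqref{eq:BDG Lp} for $Z_\alpha$ at fixed time with a $(r-s)^{-1/2}$ kernel bound, Young's convolution inequality in time (needing only $\alpha<\frac14$) instead of pointwise H\"older, and the $T^{\alpha\tilde p-1}$ bound for the factorization operator via H\"older in $r$. The differences are cosmetic: you bound $\|G(r-s,\cdot)\|_{L^2(D)}^2$ directly where the paper uses $G^2\le\|G\|_{L^\infty}G$ with $\|G\|_{L^1}=1$, and you justify stochastic Fubini through the moment bound rather than by first reducing to pointwise bounded $\Phi$. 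Your discarded pointwise-H\"older detour, which only works for $\tilde p>8$, can simply be dropped.
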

\begin{proof} 
It suffices to prove the bound with $T_1=T$, since one can afterwards apply the bound to $\Phi\one_{[0,T_1]}$ for $T_1<T$.  
Furthermore, it suffices to prove the bound for pointwise bounded $\Phi$.  Indeed, $\Phi\mapsto S*\Phi\col L^{\tilde p}(\Om ;L^{\tilde p}(0,T_1;L^p(D)))\to L^{\tilde p}(\Om;C([0,T_1];L^{p}(D)))$ is a linear operator, and pointwise bounded, progressively measurable processes are dense in the class of progressively measurable processes in  $L^{\tilde p}(\Om ;L^{\tilde p}(0,T_1;L^p(D)))$. Frow now on, we assume that $\Phi$ is pointwise bounded. 

We employ the factorization method from \cite{daprato}. We fix an arbitrary $\alpha\in(1/\tilde p, 1/4)$, recalling that $\tilde p>4$.  By \eqref{eq:L1} and Young's convolution inequality, we have 
$\|S(t)x\|_{L^p(D)}\leq \|x\|_{L^p(D)}.$  
Therefore, since ${\tilde p}>1$ and $\alpha>{1}/{\tilde p}$,   \cite[Prop.\ 5.9]{daprato}   
yields that 
\begin{equation}\label{eq:Galpha bnd}
G_\alpha\in \calL\big(L^{{\tilde p}}(0,T;L^p(D)),C([0,T];L^p(D))\big),\quad \|G_{\alpha}\|\lesssim_{\tilde p,\alpha}  T^{ \alpha-\frac{1}{\tilde p}},
\end{equation} 
where $G_\alpha(h)(t)\ceqq \int_0^t(t-s)^{\alpha-1}S(t-s)h(s)\dd s$. 
Furthermore, if $\alpha\in(0,1)$, then a.s.
\[
Z=S\diamond \Phi
=\tfrac{\sin(\alpha\pi)}{\pi} G_\alpha\big((\cdot)^{-\alpha}S\diamond \Phi\big)
=\tfrac{\sin(\alpha\pi)}{\pi} G_\alpha(Z_\alpha),
\]
where $\diamond$ denotes the stochastic convolution with respect to the space-time white noise $w$, and 
$$Z_\alpha(t,\xi)\ceqq [[((\cdot)^{-\alpha}S)\diamond \Phi](t)](\xi). $$ 
Combining  the formula for $S\diamond \Phi$ above with \eqref{eq:Galpha bnd}, we have 
\begin{equation}\label{eq:factor est}
\|S\diamond \Phi\|_{L^{\tilde p}(\Om;C([0,T];L^p(D)))}^{\tilde p}\lesssim_{\tilde p,\alpha}    T^{\tilde p \alpha-1}\|Z_\alpha\|_{L^{\tilde p}(\Om;L^{{\tilde p}}(0,T;L^p(D)))}^{\tilde p}. 
\end{equation}
Now, we first apply the BDG inequality  in the UMD space $L^p(D)$ (see \eqref{eq:BDG Lp}) for fixed $t\in[0,T]$. After that, we apply the bound \eqref{eq:Linfty} for $G$, followed by Minkowski's inequality (using that $p\in [2,\infty)$) and Young's convolution inequality in space, giving 
\begin{align*}
    \|Z_\alpha(t,\cdot)\|_{L^{\tilde p}(\Om;L^p(D))}^{\tilde p}
    &\lesssim_{p,\tilde p} \E\bigg[\Big\|\Big(\int_0^t\int_D (t-s)^{-2\alpha}G^2(t-s,\cdot-\eta)|\Phi(s,\eta)|^2\dd\eta\dd s \Big)^{\frac12}\Big\|_{L^p(D)}^{\tilde p} \bigg]\\
&= \E\bigg[\Big\| \int_0^t\int_D (t-s)^{-2\alpha}G^2(t-s,\cdot-\eta)|\Phi(s,\eta)|^2\dd\eta\dd s \Big\|_{L^{\frac{p}{2}}(D)}^{\frac{\tilde p}{2}} \bigg]\\
&\lesssim (1+T^{\frac12})^{\frac{\tilde p}{2}} \E\bigg[\Big\| \int_0^t (t-s)^{-2\alpha-\frac12}[G (t-s,\cdot)*|\Phi(s,\cdot)|^2] \dd s \Big\|_{L^{\frac{p}{2}}(D)}^{\frac{\tilde p}{2}} \bigg]\\
&\lesssim_{\tilde p} (1+T^{\frac{\tilde p}{4}}) \E\bigg[\Big(  \int_0^t (t-s)^{-2\alpha-\frac12}\big\|G (t-s,\cdot)*|\Phi(s,\cdot)|^2\big\|_{L^{\frac{p}{2}}(D)} \dd s \Big)^{\frac{\tilde p}{2}} \bigg]\\
&\leq (1+T^{\frac{\tilde p}{4}})\E\bigg[\Big(  \int_0^t (t-s)^{-2\alpha-\frac12}\big\|G (t-s,\cdot)\|_{L^1(D)}\||\Phi(s,\cdot)|^2 \|_{L^{\frac{p}{2}}(D)} \dd s \Big)^{\frac{\tilde p}{2}} \bigg]\\
&\lesssim (1+T^{\frac{\tilde p}{4}})\E\bigg[\Big(  \int_0^t (t-s)^{-2\alpha-\frac12} \|\Phi(s,\cdot) \|_{L^{p}(D)}^2 \dd s \Big)^{\frac{\tilde p}{2}} \bigg].
\end{align*}  
Applying  Tonelli's theorem twice, and applying the above, we obtain
\begin{align*}
    \| Z_\alpha\|_{L^{\tilde p}(\Om ;L^{{\tilde p}}(0,T;L^p(D)))}^{{\tilde p}} &= \| \|Z_\alpha \|_{L^{\tilde p}(\Om;L^p(D))}^{\tilde p}\|_{L^{1}(0,T)} \\
    &\lesssim_{p, \tilde p }(1+T^{\frac{\tilde p}{4}}) \int_0^T  \E\bigg[\Big(  \int_0^t (t-s)^{-2\alpha-\frac12} \|\Phi(s,\cdot) \|_{L^{p}(D)}^2 \dd s \Big)^{\frac{\tilde p}{2}} \bigg] \dd t\\
    &= (1+T^{\frac{\tilde p}{4}}) \E\bigg[\int_0^T\Big(  \int_0^t (t-s)^{-2\alpha-\frac12} \|\Phi(s,\cdot) \|_{L^{p}(D)}^2 \dd s \Big)^{\frac{\tilde p}{2}}\dd t \bigg] \\
    &= (1+T^{\frac{\tilde p}{4}})\E\Big[\big\| (\cdot)^{-2\alpha-\frac12}* \|\Phi \|_{L^{p}(D)}^2 \big\|_{L^{\frac{\tilde p}{2}}(0,T)}^ {\frac{\tilde p}{2}}\Big]\\
    &\leq(1+T^{\frac{\tilde p}{4}}) \E\big[ \|(\cdot)^{-2\alpha-\frac12}\|_{L^1(0,T)}^{\frac{\tilde p}{2}} \|\Phi \|_{L^{ {\tilde p} }(0,T;L^p(D))}^{\tilde p }\big]\\
    &\lesssim_\alpha(1+T^{\frac{\tilde p}{4}}) T^{(-2\alpha+\frac12)\frac{\tilde p}{2}}\E\big[\|\Phi \|_{L^{\tilde p}(0,T;L^p(D))}^{\tilde p}\big].
\end{align*}  
In the second-last line, we applied Young's convolution inequality in time with $1+\frac{1}{\tilde p/2}=1+\frac{1}{\tilde p/2}$. In the last line, we used that $\alpha<1/4$. 
Combining the last estimate with \eqref{eq:factor est}, we conclude that
\begin{align*}
\E[\|Z\|_{C([0,T];L^p(D))}^{\tilde p}]
&\lesssim_{\tilde p,\alpha} T^{\tilde p \alpha-1}\E[\|Z_\alpha\|_{L^{\tilde p}([0,T];L^p(D))}^{\tilde p}]
\lesssim_{p,\tilde p,\alpha} (1+T^{\frac{\tilde p}{4}})T^{ \frac{\tilde p}{4}-1}\E\big[\|\Phi \|_{L^{\tilde p}(0,T;L^p(D))}^{\tilde p}\big].
\end{align*}
Choosing, for instance, $\alpha=\frac12(\frac{1}{\tilde p}+\frac14)$, the implicit constant depends only on $p$ and $\tilde p$. 
\end{proof}

The following estimate for the stochastic convolution, uniform in both time and space, is due to \cite[Th.\ 1.2]{Salins25SHE}. 
The continuity of $Z$ in time and space  follows from
the proof of \cite[Th.\ 1.2]{Salins25SHE} and the factorization method    
\cite[Prop.\ 5.9, Th.\ 5.10]{daprato} applied with $E_1=C(D)$, $E_2=L^{\tilde p}(D)$, $p=\tilde p$, and $r=1/(2\tilde p)$. 

\begin{theorem}[{\cite[Th.\ 1.2]{Salins25SHE}}]\label{th:stoch conv C}
    Let ${\tilde p}\in (6,\infty)$. Let $\Phi\col \Om\times [0,T]\to L^{\tilde p}(D)$ be a progressively measurable process  such that
$\E\big[\int_0^T \|\Phi(t,\cdot)\|_{L^{\tilde p}(D)}^{\tilde p}\dd t\big] <  \infty$. 
Let
\begin{equation*}
Z(t,\cdot)\ceqq [S\diamond \Phi](t)=\int_0^tS(t-s)\Phi(s)\dd W(s), 
\end{equation*}
where $S$ is the heat semigroup on $L^{\tilde p}(D)$. 
Then, $Z\in L^{\tilde p}(\Om;C([0,T];C(D)))$, and for all $T_1\in(0,T]$,  
\begin{equation*}
    \E[\|Z\|_{C([0,T_1];C(D))}^{\tilde p}]\lesssim_{\tilde p}T^{\frac{\tilde p}{4} - \frac{3}{2}} \E\Big[ \int_0^{T_1} \|\Phi(s,\cdot)\|_{L^{\tilde p}(D)}^{\tilde p}\dd s\Big].
\end{equation*}
\end{theorem}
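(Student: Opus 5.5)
We follow the factorization method, as in the proof of Proposition \ref{prop:stoch conv}, but now aim for the space-time uniform norm. By density and linearity we may assume $\Phi$ is pointwise bounded and progressively measurable, and it suffices to treat $T_1=T$, applying the result to $\Phi\one_{[0,T_1]}$ afterwards.

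Fix $\alpha\in(0,1)$ and write
\[
Z=\tfrac{\sin(\alpha\pi)}{\pi}G_\alpha(Z_\alpha),\qquad Z_\alpha(t)=\int_0^t (t-s)^{-\alpha}S(t-s)\Phi(s)\dd W(s).
\]
The deterministic step is to show that $G_\alpha$ maps $L^{\tilde p}(0,T;L^{\tilde p}(D))$ boundedly into $C([0,T];C(D))$. We use \eqref{eq:est} with $q=\tilde p$ and $p_1=\infty$, which gives
\[
\|S(t)\|_{\calL(L^{\tilde p},C(D))}\lesssim t^{-\frac{1}{2\tilde p}}.
\]
Hence $G_\alpha h(t)$ is controlled by $\int_0^t (t-s)^{\alpha-1-\frac{1}{2\tilde p}}\|h(s)\|_{L^{\tilde p}}\dd s$. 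By H\"older, this is finite and uniform in $t$ provided
\[
\Big(\alpha-1-\tfrac{1}{2\tilde p}\Big)\tfrac{\tilde p}{\tilde p-1}>-1,\qquad\text{i.e.}\qquad \alpha>\tfrac{3}{2\tilde p}.
\]
This is precisely \cite[Prop.\ 5.9]{daprato} with $E_1=C(D)$, $E_2=L^{\tilde p}(D)$ and $r=1/(2\tilde p)$. Continuity in $(t,\xi)$ follows from \cite[Th.\ 5.10]{daprato}, once we check that $S(t)$ maps $L^{\tilde p}$ into $C(D)$ for $t>0$; this holds because $G(t,\cdot)$ is smooth. Tracking constants gives $\|G_\alpha\|\lesssim T^{\alpha-\frac{3}{2\tilde p}}$.

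For the stochastic step we bound $\E\|Z_\alpha\|_{L^{\tilde p}(0,T;L^{\tilde p}(D))}^{\tilde p}$ exactly as in Proposition \ref{prop:stoch conv}, with $p=\tilde p$. The BDG inequality \eqref{eq:BDG Lp}, combined with \eqref{eq:Linfty}, Minkowski and Young, gives
\[
\E\|Z_\alpha(t)\|_{L^{\tilde p}}^{\tilde p}\lesssim \E\Big(\int_0^t(t-s)^{-2\alpha-\frac12}\|\Phi(s)\|_{L^{\tilde p}}^2\dd s\Big)^{\tilde p/2}.
\]
Integrating in $t$ and applying Young's inequality in time requires $2\alpha+\tfrac12<1$, i.e.\ $\alpha<\tfrac14$. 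It yields a factor $T^{(\frac12-2\alpha)\frac{\tilde p}{2}}$ times $\E\int_0^T\|\Phi\|_{L^{\tilde p}}^{\tilde p}\dd s$.

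The two constraints on $\alpha$ are compatible exactly when $\frac{3}{2\tilde p}<\frac14$, i.e.\ $\tilde p>6$, which explains the hypothesis; we choose $\alpha$ in the interval $(\frac{3}{2\tilde p},\frac14)$. Multiplying the two bounds, the powers of $T$ combine to
\[
T^{\tilde p\alpha-\frac32}\,T^{\frac{\tilde p}{4}-\tilde p\alpha}=T^{\frac{\tilde p}{4}-\frac32},
\]
modulo the $(1+T^{1/2})$ factors from \eqref{eq:Linfty}, which we either absorb into the implicit constant for bounded $T$ or avoid via the sharper small-time heat kernel bound used in \cite{Salins25SHE}. The main obstacle is this exponent bookkeeping and confirming the sharp $C(D)$ version of the factorization lemma. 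The exact $T$-power stated should come from using the precise $t^{-1/2}$ kernel bound in the Gaussian regime rather than \eqref{eq:Linfty}, and this is the step we expect to check most carefully.
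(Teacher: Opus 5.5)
Your proposal is correct and follows the route the paper indicates. You use the factorization method of \cite[Prop.\ 5.9, Th.\ 5.10]{daprato} with $E_1=C(D)$, $E_2=L^{\tilde p}(D)$ and $r=\frac{1}{2\tilde p}$, together with the $Z_\alpha$ estimate of Proposition \ref{prop:stoch conv} at $p=\tilde p$; then $\alpha\in(\frac{3}{2\tilde p},\frac14)$ forces $\tilde p>6$, and the powers of $T$ combine to $T^{\frac{\tilde p}{4}-\frac32}$.

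On the step you flagged, just keep the $(1+T^{1/2})$ factors in a $T$-dependent constant, which is all the paper ever uses ($\lesssim_{T,\tilde p}$ in Lemma \ref{lem:regC} and in the proof of Theorem \ref{th:C}). No sharper kernel bound can remove them on the torus: already for $\Phi\equiv 1$ one has $\E|Z(T,0)|^2=\int_0^T\|G(s,\cdot)\|_{L^2(D)}^2\dd s\geq \frac{T}{2\pi}$, which rules out a constant independent of $T$ for large $T$.
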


\subsection{Existence, uniqueness, and regularity for solutions with initial data in $L^q$}\label{sub:wpn}

With the results of the previous subsection at hand, we are ready to begin our study of the controlled SHE for $u\in \A_N$, i.e. 
\begin{align*} 
  X_x^{\eps,u}(t,\xi) = [S(t)x](\xi)+V^{\eps,u}_x(t,\xi)+Y_x^{\eps,u}(t,\xi)+\sqrt{\eps}Z_x^{\eps,u}(t,\xi),
\end{align*}
where $V^{\eps,u}_x(t,\xi) , 
  Y_x^{\eps,u}(t,\xi),$ and $
  Z_x^{\eps,u}(t,\xi)$ are as in \eqref{eq:split Y Z}.  
In the main results of this subsection, we allow for $\eps\in[0,\infty)$, thereby treating the skeleton equation ($\eps=0$), the controlled SHE ($\eps>0$), and the original SHE ($\eps>0,u=0$) simultaneously.

Throughout this subsection, we let 
\begin{align}\label{eq:defE_TF_T}
\begin{alignedat}{2}
E_T &\ceqq C([0,T];L^p(D)), 
&\qquad \|\cdot\|_{E_T} &\ceqq \|\cdot\|_{C([0,T];L^p(D))},\\
F_T &\ceqq C([0,T];C(D)),
&\qquad \|\cdot\|_{F_T} &\ceqq \|\cdot\|_{C([0,T];C(D))}.
\end{alignedat}
\end{align}
Our goal is to prove well-posedness and regularity results for initial data belonging only to $L^q(D)$, allowing $q<p$ and $q<\infty$. In this case, solutions $X_x^{\eps,u}$ cannot be $E_T$- or $F_T$-valued because of the initial condition. 
However, we will establish well-posedness and regularity by studying $X_x^{\eps,u}-S(\cdot)x$, which exhibits greater regularity.
First, in Theorem \ref{th:wpnLp}, we assume the parameter conditions of Theorem \ref{th:Lp} and establish these properties using the space $E_T$. Then, in Theorem \ref{th:wpnC}, we prove an analogous result using the space $F_T$ under the conditions of Theorem \ref{th:C}. 

For our first theorem, the following lemma will be essential to setting up a suitable Picard iteration. 
  
\begin{lemma}\label{lem:reg_ET}
Let the conditions of Theorem \ref{th:Lp} be satisfied. Let 
\[
\tilde{p}\in
\begin{cases}
(4,\frac{2pq}{\lambda p-q}), \, &\text{if } q<\lambda p,\\
(4,\infty), &\text{if } q\geq \lambda p. 
\end{cases}
\]  
Let  $N\in[0,\infty)$, let $u\in L^2(0,T;L^2(D))$ with $\|u\|_{L^2(0,T;L^2(D))}\leq N$, and let $x\in L^q(D)$. 
Define mappings $\Psi_x^b, \Psi_x^{\sigma,u}\col E_T \to E_T$  and $\Psi_x^\sigma\col L^{\tilde p}(\Om;E_T)\to L^{\tilde p}(\Om;E_T)$ by
\begin{align}\label{eq:defs Psi maps}
\begin{split}
    \Psi_x^b(\phi)(t,\xi) &\ceqq\int_0^t \int_D G(t-s,\xi-\eta) b(S(s)x(\eta) + \phi(s,\eta))\dd\eta \dd s\\
    \Psi_x^{\sigma,u}(\phi)(t,\xi) &\ceqq \int_0^t \int_D G(t-s,\xi-\eta) \sigma(S(s)x(\eta) + \phi(s,\eta) )u(s,\eta) \dd\eta \dd s\\
    \Psi_x^\sigma(\phi)(t,\xi) &\ceqq \int_0^t \int_D G(t-s,\xi-\eta) \sigma(S(s)x(\eta) + \phi(s,\eta))W(\dd\eta \dd s).
    \end{split}
\end{align}  

The mappings above are well-defined, and the following Lipschitz bounds hold for all   $\phi,\psi \in E_T$,   $\tilde\phi,\tilde\psi\in L^{\tilde p}(\Om;E_T)$, and $t\in(0,T]$:
\begin{align}\label{eq:PsiLip}
\begin{split}
\|\Psi_x^{b}(\phi)-\Psi_x^{b}(\psi)\|_{E_t}^{\tilde{p}} 
    &\lesssim_{T,\tilde p} \int_0^t \|\phi-\psi\|_{E_s}^{\tilde{p}}\dd s,\\ 
    \|\Psi_x^{\sigma,u}(\phi)-\Psi_x^{\sigma,u}(\psi)\|_{E_t}^{\tilde p} 
    &\lesssim_{N,T,p,\tilde p} \int_0^t \|\phi-\psi\|_{E_s}^{\tilde p} \dd s,\\ 
    \|\Psi_x^{\sigma}(\tilde\phi)-\Psi_x^{\sigma}(\tilde\psi)\|_{L^{\tilde{p}}(\Om;E_t)}^{\tilde{p}}
    &\lesssim_{T,p,\tilde p} \int_0^t \|\tilde\phi-\tilde\psi\|_{L^{\tilde{p}}(\Om;E_s)}^{\tilde{p}}\dd s.  
 \end{split}
 \end{align}   
Furthermore,  $\Psi_x^b$ and $\Psi_x^{\sigma,u}$ with $u\in \A_N$ (see \eqref{eq:AN}) are also well-defined as mappings $L^{\tilde p}(\Om;E_T)\to L^{\tilde p}(\Om;E_T)$, and the first two bounds in \eqref{eq:PsiLip} hold a.s.\ for $\phi,\psi\in L^{\tilde p}(\Om;E_T)$. 
\end{lemma}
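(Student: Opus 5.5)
Each map splits into a ``Lipschitz part'' (the difference estimate) and a ``well-definedness part'' (the image of a single $\phi$, e.g.\ $\phi=0$, lies in $E_T$). The difference estimates are routine, because $S(s)x$ cancels. For instance $|b(S(s)x+\phi)-b(S(s)x+\psi)|\le L|\phi-\psi|$, and since $S$ is a contraction on $L^p(D)$ this gives
\[
\|\Psi^b_x(\phi)(t)-\Psi^b_x(\psi)(t)\|_{L^p}\le L\int_0^t\|\phi(s)-\psi(s)\|_{L^p}\dd s .
\]
Taking $\sup$ over $[0,t']$, $t'\le t$, and applying H\"older yields the first bound. For $\Psi^{\sigma,u}_x$ I will apply Lemma \ref{lem:det conv} with $p_1=p_2=p$ and $\varphi=\sigma(S x+\phi)-\sigma(Sx+\psi)$, so that $\|\varphi(s)\|_{L^p}\le L\|\phi(s)-\psi(s)\|_{L^p}$. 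Condition \eqref{eq:cond tilde p} reads $\tilde p>4$, which holds, and taking the supremum over $[0,t]$ gives the second bound. For $\Psi^\sigma_x$ I will use Proposition \ref{prop:stoch conv} with $T_1=t$ and $\Phi=\varphi$, using $\|\varphi(s)\|_{L^p}^{\tilde p}\le L^{\tilde p}\|\tilde\phi-\tilde\psi\|^{\tilde p}_{E_s}$. The random versions for $u\in\A_N$ follow pathwise, since Lemma \ref{lem:det conv} is deterministic and $\|u(\om)\|\le N$ a.s.; progressive measurability of the outputs is standard.

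For well-definedness it then suffices, by the Lipschitz bounds, to show $\Psi(0)\in E_T$ (resp.\ $L^{\tilde p}(\Om;E_T)$), because $\Psi(\phi)=\Psi(0)+(\Psi(\phi)-\Psi(0))$. For $b$, linear growth gives $|b(S(s)x)|\lesssim 1+|S(s)x|$. Since $S(\cdot)x\in L^1(0,T;L^p)$ by \eqref{eq:est} (exponent $\frac12(\frac1q-\frac1p)<\frac12$), the term $S*b(S(\cdot)x)$ lies in $C([0,T];L^p)$; here I will approximate $x$ by smooth data and use that the convolution is bounded from $L^1(0,T;L^p)$ into $C([0,T];L^p)$.

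The main obstacle is the $\sigma$ terms, where I need $\varphi_0(s)\ceqq\sigma(S(s)x)\in L^{\tilde p}(0,T;L^p(D))$. Growth gives $|\varphi_0|\lesssim 1+|S(s)x|^\lambda$, so
\[
\|\varphi_0(s)\|_{L^p}\lesssim 1+\|S(s)x\|_{L^{\lambda p}}^\lambda .
\]
If $q\ge\lambda p$ this is bounded by $1+\|x\|_{L^q}^\lambda$ and any $\tilde p>4$ works. If $q<\lambda p$, \eqref{eq:est} with $p_1=\lambda p$ gives the bound $s^{-\frac\lambda2(\frac1q-\frac1{\lambda p})}\|x\|_{L^q}^\lambda$, which is $\tilde p$-integrable iff $\tilde p\frac{\lambda p-q}{2pq}<1$, i.e.\ $\tilde p<\frac{2pq}{\lambda p-q}$. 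This is exactly the stated range. It is nonempty, i.e.\ $\frac{2pq}{\lambda p-q}>4$, iff $q(p+2)>2\lambda p$, which is precisely the hypothesis of Theorem \ref{th:Lp}. (When $\lambda p<1$ I will use $L^{\lambda p}$ as a quasi-norm, or simply bound by $L^1$ using $|D|<\infty$.)

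With $\varphi_0$ deterministic and in $L^{\tilde p}(0,T;L^p)$, Lemma \ref{lem:det conv} gives $\Psi^{\sigma,u}_x(0)\in E_T$. Proposition \ref{prop:stoch conv} gives $\Psi^\sigma_x(\tilde\phi)\in L^{\tilde p}(\Om;E_T)$, with $\varphi=\sigma(Sx+\tilde\phi)$, whose $L^{\tilde p}(\Om;L^{\tilde p}(0,T;L^p))$ norm is finite by the same computation plus $\tilde\phi\in L^{\tilde p}(\Om;E_T)$. All constants depend only on $T,p,\tilde p,N$ and $\|x\|_{L^q}$.
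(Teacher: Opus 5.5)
Your proposal is correct and follows essentially the same route as the paper. It uses contractivity of $S$ for $\Psi_x^b$, Lemma \ref{lem:det conv} with $p_1=p_2=p$ (where \eqref{eq:cond tilde p} reduces to $\tilde p>4$) for $\Psi_x^{\sigma,u}$, and Proposition \ref{prop:stoch conv} for $\Psi_x^\sigma$. It also uses the same case split $q\ge\lambda p$ versus $q<\lambda p$ with \eqref{eq:est} to show that $\sigma(S(\cdot)x)\in L^{\tilde p}(0,T;L^p(D))$ exactly when $\tilde p<\frac{2pq}{\lambda p-q}$.

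The differences are organizational. You reduce well-definedness to $\Psi(0)$ via linearity and the Lipschitz bounds, whereas the paper bounds $\Psi(\phi)$ directly through the growth conditions (using $\|x\|_{L^1(D)}$ for the $b$-term). Your smooth approximation of $x$ in the $b$-step is unnecessary, since $S*f\in C([0,T];L^p(D))$ for every $f\in L^1(0,T;L^p(D))$.
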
 
\begin{proof}  
\textbf{Step 1a ($\Psi_x^b$ is well-defined):}
Let $x \in L^1(D) \supset L^q(D)$ and $\phi \in E_T$. By contractiveness of $S$ on $L^p(D)$, \eqref{eq:est}, and the linear growth in \eqref{eq:growth} for $b$, we have  
\begin{align*}
\|\Psi_x^b(\phi)(t,\cdot)\|_{L^p(D)}
&\leq   \int_0^t  \|b(S(s)x+\phi(s))\|_{L^p(D)}\dd s \\
&\lesssim \int_0^t  \|1+|S(s) x| + |\phi(s)|\|_{L^p(D)}\dd s\\
&\lesssim_{T,p,|D|}\int_0^t  1+ s^{-\frac12(1-\frac1p)}\|x\|_{L^1(D)}+\|\phi(s)\|_{L^p(D)} \dd s\\
&\lesssim_{T,p}  1 +   \|x\|_{L^1(D)} +  \|\phi\|_{C([0,t];L^p(D))},
\end{align*}  
using that $-\frac12(1-\frac1p)>-\frac12>-1$. 
Moreover, by the estimates above, we have $b(S(\cdot)x+\phi)\in L^1([0,T];L^{p}(D))$. By classical continuous semigroup theory, it follows that $\Psi_x^b(\phi)=S*b(S(\cdot)x+\phi)\in C([0,T];L^p(D))= E_T$.  
The estimate above gives
\[
\|\Psi_x^b(\phi)\|_{E_T}\lesssim_{T,p,|D|}1 +   \|x\|_{L^1(D)} +  \|\phi\|_{E_T}, 
\]
proving that $\Psi_x^b\col E_T\to E_T$ is well-defined. 

\textbf{Step 1b ($\Psi_x^{\sigma,u}$ is well-defined):} 
Let $x\in L^q(D)$. First, we show that $S(\cdot)x\in L^{\lambda \tilde p}(0,T;L^{\lambda p}(D))$. 
If $q \leq \lambda p$, then the semigroup bound  \eqref{eq:est} gives 
\[
\|S(t)x\|_{L^{\lambda p}(D)}^{\lambda\tilde p} \lesssim_{T,\lambda, p,\tilde p,q} t^{-\frac{1}{2} (\frac{1}{q}-\frac{1}{\lambda p}) \lambda \tilde{p}}\|x\|_{L^q(D)}^{\lambda\tilde p}.
\]
If $q> \lambda p$, then $L^q(D)\subset L^{\lambda {p}}(D)$ by H\"older's inequality, and using   the contractivity of $S$ on $L^q(D)$,  
\[\|S(t) x\|_{L^{\lambda p}(D)}^{\lambda\tilde p} \lesssim_{\lambda,p,\tilde p, q,|D|} 
\|S(t) x\|_{L^{q}(D)}^{\lambda\tilde p}\leq 
\|x\|_{L^{q}(D)}^{\lambda\tilde p}.\]
Combining both cases, we have for all $T_1\in(0,T]$:
\begin{align}\label{eq:Sx Lp}
\int_0^{T_1}\|S(s)x\|_{L^{\lambda p}(D)}^{\lambda \tilde p}\dd s 
&\lesssim_{T,\lambda,p,\tilde p,q,|D|}\int_0^{T_1}s^{-\frac12((\frac1q-\frac{1}{\lambda p})\vee 0)\lambda \tilde p}\|x\|_{L^{q}(D)}^{\lambda \tilde p}\dd s \lesssim_{T,\lambda,p,\tilde p,q} \|x\|_{L^q(D)}^{\lambda\tilde p},
\end{align} 
provided that  
\[
-\frac12(\frac{1}{q}-\frac{1}{\lambda p})\lambda\tilde p >-1 \quad\text{ if } q\leq \lambda p.
\]
Assuming $\tilde p>4$ and $q\leq\lambda p$, the latter holds if and only if 
\[ 
\frac{2\lambda p}{p+2}<q,  \quad \tilde{p}\in(4,\frac{2pq}{ \lambda p-q}),
\] which coincides exactly with our assumptions for the case $q\leq\lambda p$.
Thus indeed,  $S(\cdot)x\in L^{\lambda \tilde p}(0,T;L^{\lambda p}(D))$. 

Let $\phi\in E_T$. To prove that $\Psi_x^{\sigma,u}(\phi)\in E_T$, we can apply Lemma \ref{lem:det conv}  with $p_1=p_2=p$ and $\varphi=\sigma(S(\cdot)x+\phi)$, provided that $\varphi =\sigma(S(\cdot)x+\phi)\in  L^{\tilde p}( [0,T];L^p(D))$. By the growth bound \eqref{eq:growth} for $\sigma$ and by \eqref{eq:Sx Lp}, we have
\begin{align}\label{eq:varphi moment} 
\int_0^T \|\sigma(S(s)x+\phi(s,\cdot))\|_{L^p(D)}^{\tilde p}\dd s
&\lesssim_{\lambda,\tilde p} \int_0^T \|1\|_{L^p(D)}^{\tilde p}+ \|S(s)x\|_{L^{\lambda p}(D)}^{\lambda \tilde p}+ \|\phi(s,\cdot)\|_{L^{\lambda p}(D)}^{\lambda \tilde p}\dd s\notag\\
&\lesssim_{T,\lambda,p,\tilde p,q,|D|} 1+\|x\|_{L^q(D)}^{\lambda\tilde p}+T\|\phi \|_{E_T}^{\lambda \tilde p}\notag\\
&\lesssim_T 1+\|x\|_{L^q(D)}^{\lambda\tilde p}+\|\phi \|_{E_T}^{\tilde p}<\infty,  
\end{align}
where we used that $\lambda\in(0,1]$. 
Thus, Lemma \ref{lem:det conv} indeed gives  
\[
\|\Psi_x^{\sigma,u}(\phi)\|_{E_T}^{\tilde p}\lesssim_{N,T,\lambda,p,\tilde p,q,|D|} 1+\|x\|_{L^q(D)}^{\lambda\tilde p}+\|\phi \|_{E_T}^{\tilde p} <\infty. 
\]

\textbf{Step 1c ($\Psi_x^\sigma$ is well-defined):} 
Let $\tilde\phi\in L^{\tilde p}(\Om;E_T)$. 
We apply Proposition \ref{prop:stoch conv}  with $p_1=p_2=p$ and    $\varphi=\sigma(S(\cdot)x+\tilde\phi)$. Thanks to this result, it suffices to prove that $\varphi\in L^{\tilde p}(\Om;L^{\tilde p}( [0,T];L^p(D)))$. 
Applying \eqref{eq:varphi moment} pointwise in $\om\in\Om$ and taking the expectation yields 
\begin{align}\label{eq:varphi moment 2}
\E[\int_0^T \|\sigma(S(s)x+\tilde\phi(s,\cdot))\|_{L^p(D)}^{\tilde p}\dd s]
&\lesssim_{T,\lambda,p,\tilde p,q,|D|}   1+\|x\|_{L^q(D)}^{\lambda\tilde p}+ \E[\|\tilde\phi \|_{E_T}^{\tilde p}]
<\infty,  
\end{align} 
thus $\Psi_x^\sigma$ is well-defined.   

\textbf{Step 2a ($\Psi_x^b$ is Lipschitz):}
Let $\phi,\psi\in E_T$. 
Since $S$ is contractive  on $L^p(D)$ and $b$ is Lipschitz continuous, we have  for all $0\leq r\leq t\leq T$:
\begin{align*}
\|\Psi_x^b(\phi)(r)-\Psi_x^b(\psi)(r)\|_{L^p(D)}
&\leq \int_0^r \big\| S(r-s) \big( b(S(s)x+\phi(s))-b(S(s)x+\psi(s)) \big) \big\|_{L^p(D)} \dd s \\
&\leq \int_0^r \| b(S(s)x+\phi(s))-b(S(s)x+\psi(s)) \|_{L^p(D)} \dd s \\
&\lesssim \int_0^t \| \phi(s)-\psi(s) \|_{L^p(D)} \dd s\\
&\leq \int_0^t \| \phi-\psi \|_{E_s} \dd s.
\end{align*} 
Taking the supremum over   $r \in [0,t]$, raising both sides to the $\tilde p$-th power, and applying H\"older's inequality (recall that $\tilde p>4 \geq 1$) yields
\begin{align*}
\|\Psi_x^b(\phi)-\Psi_x^b(\psi)\|_{E_t}^{\tilde p}
&\lesssim \Big(\int_0^t \| \phi-\psi \|_{E_s} \dd s\Big)^{\tilde p}\lesssim_{T,\tilde p}\int_0^t \| \phi-\psi \|_{E_s}^{\tilde p} \dd s.
\end{align*}

\textbf{Step 2b ($\Psi_x^{\sigma,u}$ is Lipschitz):}
Lemma \ref{lem:det conv} and the Lipschitz continuity of $\sigma$ imply that for all $t\in[0,T]$ and $\phi,\psi\in E_T$, 
\begin{align*}
    \|\Psi_x^{\sigma,u}(\phi)-\Psi_x^{\sigma,u}(\psi)\|_{E_t}^{\tilde p} &\lesssim_{N,T,p,\tilde p} \int_0^t \|\sigma(S(s)x+\phi(s,\cdot))-\sigma(S(s)x+\psi(s,\cdot))\|_{L^p(D)}^{\tilde p}\dd s\\
    &\lesssim \int_0^t \| \phi  - \psi  \|_{E_s}^{\tilde p}\dd s.
\end{align*}

\textbf{Step 2c ($\Psi_x^{\sigma}$ is Lipschitz):}
Proposition \ref{prop:stoch conv}, the Lipschitz continuity of $\sigma$, and Tonelli's theorem imply that for all $t\in[0,T]$ and $\tilde \phi, \tilde\psi\in L^{\tilde p}(\Om;E_T)$: 
\begin{align*}
    \|\Psi_{\sigma}(\tilde\phi)-\Psi_{\sigma}(\tilde\psi)\|_{L^{\tilde p}(\Om;E_t)}^{\tilde p} &\lesssim_{T,p,\tilde p} 
     \E\Big[\int_0^t \|\sigma(S(s)x+\tilde\phi(s,\cdot))-\sigma(S(s)x+\tilde\psi(s,\cdot))\|_{L^p(D)}^{\tilde p}\dd s\Big]\\
     &\lesssim 
    \int_0^t \|\tilde\phi  -\tilde\psi \|_{L^{\tilde p}(\Om;E_s)}^{\tilde p}\dd s. 
\end{align*} 

\textbf{Step 3 ($\Psi_x^b$ and $\Psi_x^{\sigma,u}$ on $L^{\tilde p}(\Om;E_T)$):} 
Let $\tilde \phi,\tilde\psi\in L^{\tilde p}(\Om;E_T)$ and $u\in\A_N$. Applying the estimates of Steps 1a and 1b pointwise in $\om\in\Om$ gives $\Psi_x^b(\tilde\phi),\Psi_x^{\sigma,u}(\tilde\phi)\in L^{\tilde p}(\Om;E_T)$, and the Lipschitz estimates of \eqref{eq:PsiLip} apply a.s. 
\end{proof}

Next, we establish  existence and uniqueness for the controlled stochastic heat eaquation \eqref{eq:split X}. 

\begin{theorem}\label{th:wpnLp}
Let the conditions in Theorem \ref{th:Lp} be satisfied. Let $\eps,N\in[0,\infty)$,   $u\in \A_N$ (defined in \eqref{eq:AN}), and let $x\in L^q(D)$. 

Then, there exists a mild solution $X_x^{\eps,u}$ to \eqref{eq:split X} in the sense of Definition \ref{def:sol}, which  satisfies 
\begin{align*}
&X_x^{\eps,u}\in L^{\lambda \tilde p}(0,T;L^{\lambda p}(D))+ L^{\tilde{p}}(\Omega;C([0,T];L^p(D)))\subset L^{\tilde p}(\Om;L^{\lambda \tilde p}(0,T;L^{\lambda p}(D)))\\
&\qquad\text{ for all }\quad \tilde{p}\in
\begin{cases}
(4,\frac{2pq}{\lambda p-q}), \, &\text{if } q<\lambda p,\\
(4,\infty), &\text{if } q\geq \lambda p. 
\end{cases}
\end{align*}
 Furthermore, $V^{\eps,u}_x$, $Y^{\eps,u}_x$, $Z^{\eps,u}_x$ (see  \eqref{eq:split Y Z}), and $X^{\eps,u}_x -S(\cdot)x$   all belong to  $L^{\tilde{p}}(\Omega;C([0,T];L^p(D)))$.  
 Uniqueness holds amongst mild solutions belonging to $\{S(\cdot)x+\phi:\phi\in L^{\tilde p}(\Om;C([0,T];L^p(D)))\}$. 
\end{theorem}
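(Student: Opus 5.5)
We reduce the problem to a fixed-point equation for the remainder $\phi\ceqq X_x^{\eps,u}-S(\cdot)x$. By \eqref{eq:split X}--\eqref{eq:split Y Z}, $X_x^{\eps,u}=S(\cdot)x+\phi$ is a mild solution if and only if
\[
\phi=\Gamma(\phi)\ceqq \Psi_x^b(\phi)+\Psi_x^{\sigma,u}(\phi)+\sqrt{\eps}\,\Psi_x^{\sigma}(\phi),
\]
with the maps from \eqref{eq:defs Psi maps}. We fix $\tilde p$ in the admissible range. By Lemma \ref{lem:reg_ET}, including its Step 3 (which covers random $u\in\A_N$), $\Gamma$ maps $L^{\tilde p}(\Om;E_T)$ into itself. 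It also preserves progressive measurability: the deterministic convolutions are adapted because $u$ is progressively measurable, and the stochastic convolution is adapted by construction. Summing the three bounds in \eqref{eq:PsiLip}, after taking expectations in the first two since they hold a.s., gives for $t\in(0,T]$
\[
\|\Gamma(\phi)-\Gamma(\psi)\|_{L^{\tilde p}(\Om;E_t)}^{\tilde p}\le K\int_0^t\|\phi-\psi\|_{L^{\tilde p}(\Om;E_s)}^{\tilde p}\dd s,
\]
with $K$ depending on $N,T,p,\tilde p$ and $\eps$.

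Existence follows by Picard iteration. Set $\phi_0=0$ and $\phi_{n+1}=\Gamma(\phi_n)$. Iterating the integral inequality gives
\[
\|\phi_{n+1}-\phi_n\|^{\tilde p}_{L^{\tilde p}(\Om;E_T)}\le \frac{(KT)^n}{n!}\,\|\phi_1\|^{\tilde p}_{L^{\tilde p}(\Om;E_T)},
\]
so $(\phi_n)$ is Cauchy in $L^{\tilde p}(\Om;E_T)$. Its limit $\phi$ satisfies $\phi=\Gamma(\phi)$ because $\Gamma$ is Lipschitz. Equivalently, one can use the weighted norm $\sup_t e^{-\beta t}\|\cdot\|_{L^{\tilde p}(\Om;E_t)}^{\tilde p}$ to make $\Gamma$ a contraction. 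With $X\ceqq S(\cdot)x+\phi$, the terms $V,Y,Z$ of \eqref{eq:split Y Z} are exactly $\Psi_x^b(\phi),\Psi_x^{\sigma,u}(\phi),\Psi_x^\sigma(\phi)$. These lie in $L^{\tilde p}(\Om;C([0,T];L^p(D)))$ by Lemma \ref{lem:reg_ET}, and so does $X-S(\cdot)x=\phi$. The identities in \eqref{eq:split X} hold in $E_T$ a.s., hence for a.e.\ $(t,\xi)$.

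For uniqueness, let $S(\cdot)x+\psi$ be another mild solution with $\psi\in L^{\tilde p}(\Om;E_T)$. Then $\psi=\Gamma(\psi)$, and $f(t)\ceqq\|\phi-\psi\|_{L^{\tilde p}(\Om;E_t)}^{\tilde p}$ is bounded and satisfies $f(t)\le K\int_0^tf$, so $f\equiv0$ by Gr\"onwall.

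The one delicate step is the pointwise identification of the Walsh integral $Z$ with $\Psi_x^\sigma(\phi)$ as the UMD stochastic integral. This is exactly the identity recorded in Subsection \ref{sub:white noise}, so it is the step to check carefully rather than a genuine obstacle.

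For the regularity statement, $S(\cdot)x\in L^{\lambda\tilde p}(0,T;L^{\lambda p}(D))$ by \eqref{eq:Sx Lp}, which holds precisely in the admissible range of $\tilde p$. Since $\lambda\le1$ and $D$ is bounded, $C([0,T];L^p(D))\into L^{\lambda\tilde p}(0,T;L^{\lambda p}(D))$, which gives the stated inclusion. The construction above is for one fixed $\tilde p$, whereas the statement claims membership for all admissible $\tilde p$. The fixed point for a larger $\tilde p$ also lies in $L^{\tilde p'}$ for every smaller $\tilde p'$, so by uniqueness in $L^{\tilde p'}(\Om;E_T)$ it coincides with the fixed point constructed there. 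Hence a single solution belongs to every admissible class.
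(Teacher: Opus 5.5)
Your proposal is correct and follows essentially the same route as the paper: a Picard iteration for $\phi = X_x^{\eps,u}-S(\cdot)x$ in $L^{\tilde p}(\Om;C([0,T];L^p(D)))$ driven by the Lipschitz bounds \eqref{eq:PsiLip} of Lemma \ref{lem:reg_ET}, followed by Gr\"onwall's inequality for uniqueness within the class $S(\cdot)x+L^{\tilde p}(\Om;C([0,T];L^p(D)))$. Your additional consistency argument is a worthwhile detail that the paper leaves implicit: solutions built for different admissible $\tilde p$ coincide by uniqueness, so a single solution lies in every admissible class.
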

\begin{proof} Define the space $E_T$ by \eqref{eq:defE_TF_T}. We use a Picard iteration scheme in the space $L^{\tilde p}(\Om;E_T)$. 
Let $\phi_0\ceqq 0$. Using the mappings from Lemma \ref{lem:reg_ET}, define recursively for $n\in\N$:
\begin{align}\label{eq:phi_n recursion}
\phi_n &\ceqq  \Psi_x^b(\phi_{n-1})+\Psi_x^{\sigma,u}(\phi_{n-1})+\sqrt{\eps} \Psi_x^\sigma(\phi_{n-1}). 
\end{align}  
By the Lipschitz bounds of Lemma \ref{lem:reg_ET}, we have for all $n\in\N$ and $t\in[0,T]$:
\begin{align*}
\|\phi_{n+1}-\phi_n\|_{L^{\tilde p}(\Om;E_t)}^{\tilde p} 
&\lesssim_{\tilde p} 
\|\Psi_x^b(\phi_{n})-\Psi_x^b(\phi_{n-1})\|_{L^{\tilde p}(\Om;E_t)}^{\tilde p}\\
&\qquad+\|\Psi_x^{\sigma,u}(\phi_{n})-\Psi_x^{\sigma,u}(\phi_{n-1})\|_{L^{\tilde p}(\Om;E_t)}^{\tilde p}+
\eps^{\tilde p/2}\|\Psi_x^\sigma(\phi_{n})-\Psi_x^\sigma(\phi_{n-1})\|_{L^{\tilde p}(\Om;E_t)}^{\tilde p} \\
&\lesssim_{N,T,p,\tilde p,\eps} \int_0^t \| \phi_n - \phi_{n-1}\|_{L^{\tilde p}(\Om;E_s)}^{\tilde p}\dd s. 
\end{align*}
By induction, using the preceding estimate, we obtain for all $n\in\N_0$ and $t\in[0,T]$:  
\[\|\phi_{n+1} - \phi_n\|_{L^{\tilde{p}}(\Om;E_t)}^{\tilde p} \leq \|\phi_1 - \phi_0\|_{L^{\tilde{p}}(\Om;E_T)}^{\tilde p} \frac{C^n t^n}{n!}, \]
for a constant $C$ depending only on $N,T,p,\tilde p$ and $\eps$. Consequently,
\[
\sum_{n=0}^{\infty}
\|\phi_{n+1}-\phi_n\|_{L^{\tilde p}(\Om;E_T)}
\leq
\|\phi_1-\phi_0\|_{L^{\tilde p}(\Om;E_T)}
\sum_{n=0}^{\infty}
\Big(\frac{C^nT^n}{n!}\Big)^{1/\tilde p}
<\infty.
\]
Since $\|\phi_{m}-\phi_k\|_{L^{\tilde p}(\Om;E_T)}\leq \sum_{n=k}^\infty \|\phi_{n+1}-\phi_n\|_{L^{\tilde p}(\Om;E_T)}$, it follows that   $(\phi_n)_n$ is
Cauchy in $L^{\tilde p}(\Om;E_T)$, and therefore, it has a limit $\phi$ in $L^{\tilde p}(\Om;E_T)$. 
Moreover, 
Tonelli's theorem yields
\begin{align*}
\E\Big[
\sum_{n=0}^{\infty}
\|\phi_{n+1}-\phi_n\|_{E_T}
\Big]=
\sum_{n=0}^{\infty}
\E\Big[
\|\phi_{n+1}-\phi_n\|_{E_T}
\Big] \leq
\sum_{n=0}^{\infty}
\|\phi_{n+1}-\phi_n\|_{L^{\tilde p}(\Om;E_T)}
<\infty.
\end{align*}
Hence, $\sum_{n=0}^{\infty}
\|\phi_{n+1}-\phi_n\|_{E_T}<\infty$ a.s.  
By a telescoping sum, it follows that 
$(\phi_n)_n$ is
Cauchy in $E_T$ a.s., hence $(\phi_n)_n$ converges also a.s.\ to $\phi$ in $E_T$ (by uniqueness of limits in probability).

Define $X\ceqq S(\cdot)x+ \phi$ and note that $X\in L^{\lambda \tilde p}(0,T;L^{\lambda p}(D))+ L^{\tilde p}(\Om;E_T)$ by \eqref{eq:Sx Lp}. We prove that $X$ is a mild solution to \eqref{eq:split X}.  
Since $S(\cdot)x$ is deterministic and Borel measurable, it is progressively measurable.  Moreover, since $\phi_n\to\phi$ a.s.\ in $E_T$ and each $\phi_n$ is adapted, completeness of the filtration implies that $\phi$ has an indistinguishable adapted continuous version, which is progressively measurable. Hence $X$ is indistinguishable from a progressively measurable process. 
Next, we verify that $X$ satisfies the mild solution formula. 
By \eqref{eq:PsiLip} and Tonelli's theorem,  
\begin{align*}
&\|\Psi_x^{b}(\phi_n) -\Psi_x^{b}(\phi)\|_{L^{\tilde p}(\Om;E_T)}+\|\Psi_x^{\sigma,u}(\phi_n)-\Psi_x^{\sigma,u}(\phi)\|_{L^{\tilde p}(\Om;E_T)}+ \|\Psi_x^{\sigma}(\phi_n)-\Psi_x^{\sigma}(\phi)\|_{L^{\tilde p}(\Om;E_T)} \\
&\lesssim_{N,T,p,\tilde p}\textstyle{\int_0^T} \|\phi_n-\phi\|_{L^{\tilde p}(\Om;E_s)}\dd s \leq T\|\phi_n-\phi\|_{L^{\tilde p}(\Om;E_T)}\to 0\qquad   \text{as }n\to\infty.
\end{align*}
In particular, we conclude that 
the following convergences hold in $L^p(D)$, a.s.\ for all $t\in [0,T]$:
\[
\lim_{n\to \infty}\Psi_x^{b}(\phi_n)(t)=\Psi_x^{b}(\phi)(t),\quad
\lim_{n\to \infty}\Psi_x^{\sigma,u}(\phi_n)(t)=\Psi_x^{\sigma,u}(\phi)(t),\quad
\lim_{n\to \infty}\Psi_x^{\sigma}(\phi_n)(t)=\Psi_x^{\sigma}(\phi)(t).
\]
Consequently, we have a.s.  for all $t\in [0,T]$, a.e.\ on $D$: 
\begin{align*}
X(t)=S(t)x+\phi &=S(t)x+\lim_{n\to\infty} {\phi}_n(t)\\
&=S(t)x+\lim_{n\to \infty}\big(\Psi_x^{b}(\phi_n)(t)+\Psi_x^{\sigma,u}(\phi_n)(t)+\sqrt{\eps}\Psi_x^{\sigma}(\phi_n)(t)\big)\\
&=S(t)x+\Psi_x^{b}(\phi)(t)+\Psi_x^{\sigma,u}(\phi)(t)+\sqrt{\eps}\Psi_x^{\sigma}(\phi)(t)\\
&= S(t)x+ \Psi_x^{b}(-S(\cdot)x+X)(t)+\Psi_x^{\sigma,u}(-S(\cdot)x+X)(t)+\sqrt{\eps}\Psi_x^{\sigma}(-S(\cdot)x+X)(t).
\end{align*}
Recalling the definition of $\Psi_x^{b}, \Psi_x^{b},$ and $\Psi_x^{b}$, this proves that $X_x^{\eps,u}\ceqq X$ is a mild solution. 

For the regularity claims, note that by   construction, $X_x^{\eps,u}-S(\cdot)x=\phi\in L^{\tilde p}(\Om;E_T)$. Moreover, 
\[
V_x^{\eps,u}=\Psi_x^{b}(X_x^{\eps,u}-S(\cdot)x), \quad 
Y_x^{\eps,u}=\Psi_x^{\sigma,u}(X_x^{\eps,u}-S(\cdot)x), \quad
Z_x^{\eps,u}=\Psi_x^{\sigma}(X_x^{\eps,u} -S(\cdot)x). 
\]
Since  the $\Psi$ mappings were proved to map   $L^{\tilde p}(\Om;E_T)\to L^{\tilde p}(\Om;E_T)$, we conclude that $V_x^{\eps,u}, Y_x^{\eps,u}$ and $Z_x^{\eps,u}$ belong to $L^{\tilde p}(\Om;E_T)$. 

Finally, we prove uniqueness within $\Gamma\ceqq \{S(\cdot)x+\phi:\phi\in L^{\tilde p}(\Om;E_T)\}$.  If $X_x^{\eps,u,1}, X_x^{\eps,u,2}$ are mild solutions and belong to $\Gamma$, then for $j=1,2$: $X_x^{\eps,u,j}-S(\cdot)x \in L^{\tilde p}(\Om;E_T)$ and 
\[
X_x^{\eps,u,j}-S(\cdot)x=\Psi_x^b(X_x^{\eps,u,j}-S(\cdot)x)+\Psi_x^{\sigma,u}(X_x^{\eps,u,j}-S(\cdot)x)+\sqrt{\eps}\Psi_x^\sigma(X_x^{\eps,u,j}-S(\cdot)x).
\]
Hence, \eqref{eq:PsiLip} applied with $\phi=X_x^{\eps,u,1}-S(\cdot)x$ and $\psi=X_x^{\eps,u,2}-S(\cdot)x$ implies that for all $t\in[0,T]$:  
$$
\|X_x^{\eps,u,1}-{X}_x^{\eps,u,2}\|_{L^{\tilde p}(\Om;E_t)}\lesssim_{N,T,p,\tilde p,\eps}\int_0^t\|X_x^{\eps,u,1}- {X}_x^{\eps,u,2}\|_{L^{\tilde p}(\Om;E_s)}\dd s,
$$ 
and Gr\"onwall's inequality yields $X_x^{\eps,u,1}-{X}_x^{\eps,u,2}=0$ a.s., proving uniqueness. 
\end{proof}

Next, we work toward our second well-posedness and regularity result, Theorem \ref{th:wpnC}, which treats   initial data in $L^q(D)$ under the parameter conditions of Theorem \ref{th:C}. 
As a preliminary step, we derive a suitable analog of Lemma \ref{lem:reg_ET} using the space $F_T$. 

\begin{lemma}\label{lem:regC}
Let the conditions of Theorem \ref{th:C} hold and let $\tilde{p}\in (6,\frac{3q}{\lambda})$.     
Let  $N\in[0,\infty)$, let $u\in L^2(0,T;L^2(D))$ with $\|u\|_{L^2(0,T;L^2(D))}\leq N$, and let $x\in L^q(D)$. 
Define the space $F_T$ as in \eqref{eq:defE_TF_T}, 
and define $\Psi_x^b, \Psi_x^{\sigma,u}\col F_T \to F_T$  and $\Psi_x^\sigma\col L^{\tilde p}(\Om;F_T)\to L^{\tilde p}(\Om;F_T)$ by  the formulas in \eqref{eq:defs Psi maps}. 

Then, these mappings are well-defined and the following Lipschitz bounds hold for all   $\phi,\psi \in F_T$,   $\tilde\phi,\tilde\psi\in L^{\tilde p}(\Om;F_T)$, and $t\in(0,T]$:
\begin{align}\label{eq:PsiLip C}
\begin{split}
\|\Psi_x^{b}(\phi)-\Psi_x^{b}(\psi)\|_{F_t}^{\tilde{p}} 
    &\lesssim_{T,\tilde p} \int_0^t \|\phi-\psi\|_{F_s}^{\tilde{p}}\dd s,\\ 
    \|\Psi_x^{\sigma,u}(\phi)-\Psi_x^{\sigma,u}(\psi)\|_{F_t}^{\tilde p} 
    &\lesssim_{N,T,\tilde p} \int_0^t \|\phi-\psi\|_{F_s}^{\tilde p} \dd s,\\ 
    \|\Psi_x^{\sigma}(\tilde\phi)-\Psi_x^{\sigma}(\tilde\psi)\|_{L^{\tilde{p}}(\Om;F_t)}^{\tilde{p}}
    &\lesssim_{T,\tilde p} \int_0^t \|\tilde\phi-\tilde\psi\|_{L^{\tilde{p}}(\Om;F_s)}^{\tilde{p}}\dd s.  
 \end{split}
 \end{align}  
 Furthermore,  $\Psi_x^b$ and $\Psi_x^{\sigma,u}$ with $u\in \A_N$ (see \eqref{eq:AN}) are also well-defined as mappings $L^{\tilde p}(\Om;F_T)\to L^{\tilde p}(\Om;F_T)$, and the first two bounds in \eqref{eq:PsiLip C} hold a.s.\ for $\phi,\psi\in L^{\tilde p}(\Om;F_T)$. 
\end{lemma}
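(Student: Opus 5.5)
We mirror the proof of Lemma \ref{lem:reg_ET}, replacing $E_T$ by $F_T$, Lemma \ref{lem:det conv} with $p_2=p$ by its case $p_2=\infty$, and Proposition \ref{prop:stoch conv} by Theorem \ref{th:stoch conv C}.

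\textbf{Integrability of the free evolution.} The key preliminary estimate is that $S(\cdot)x\in L^{\lambda\tilde p}(0,T;L^{\tilde p}(D))$, with
\[
\int_0^{T}\|S(s)x\|_{L^{\tilde p}(D)}^{\lambda\tilde p}\dd s\lesssim \|x\|_{L^q(D)}^{\lambda\tilde p}.
\]
If $q\ge\tilde p$ this follows from H\"older and contractivity. Otherwise \eqref{eq:est} gives the integrand $s^{-\frac12(\frac1q-\frac1{\tilde p})\lambda\tilde p}$, which is integrable iff $\lambda\tilde p(\frac1q-\frac1{\tilde p})<2$, i.e.\ $\tilde p<\frac{2q}{\lambda}+1$. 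Since $\tilde p<\frac{3q}{\lambda}$ need not imply this, I expect the exponent bookkeeping here to be the main obstacle. The cure is to measure $\sigma(S(\cdot)x+\phi)$ not in $L^{\tilde p}(D)$ but in $L^{p_1}(D)$ for a suitably small finite $p_1>2$. Then:
\begin{itemize}
\item the singularity is $s^{-\frac{\lambda\tilde p}{2}(\frac1q-\frac{1}{\lambda p_1})}$, integrable when $\tilde p<\frac{2q}{\lambda}\big(1-\frac{q}{\lambda p_1}\big)^{-1}$;
\item Lemma \ref{lem:det conv} with $p_2=\infty$ requires $\tilde p>\frac{4}{1-2/p_1}$.
\end{itemize}
Both hold for $p_1$ near $2$ with the range $\tilde p$ is allowed, which uses $2\lambda<q$. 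I would check carefully that $\tilde p\in(6,\frac{3q}{\lambda})$ suffices; the bound $3q/\lambda$ presumably arises from the stochastic term, where Theorem \ref{th:stoch conv C} needs $L^{\tilde p}(D)$ in space.

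\textbf{Stochastic term.} For $\Psi_x^\sigma$ we need
\[
\E\Big[\int_0^T\|\sigma(S(s)x+\tilde\phi(s))\|_{L^{\tilde p}(D)}^{\tilde p}\dd s\Big]<\infty.
\]
By growth, $\|\sigma(z)\|_{L^{\tilde p}}^{\tilde p}\lesssim 1+\|z\|_{L^{\lambda\tilde p}}^{\lambda\tilde p}$, so it suffices that $\int_0^T\|S(s)x\|_{L^{\lambda\tilde p}(D)}^{\lambda\tilde p}\dd s<\infty$. When $q<\lambda\tilde p$ the exponent is $-\frac12(\frac1q-\frac1{\lambda\tilde p})\lambda\tilde p>-1$, i.e.\ $\lambda\tilde p<3q$, which is exactly $\tilde p<3q/\lambda$. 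The $\tilde\phi$ part is bounded by $\|\tilde\phi\|_{F_T}$ times a constant $|D|$-power.

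\textbf{Drift and control terms.} For $\Psi_x^{\sigma,u}$ I would likewise use $L^{\lambda\tilde p}$-based bounds: apply Lemma \ref{lem:det conv} with $p_1=\tilde p>6>2$, $p_2=\infty$, and $\varphi=\sigma(S(\cdot)x+\phi)\in L^{\tilde p}(0,T;L^{\tilde p}(D))$ (same integrability as above). The condition \eqref{eq:cond tilde p} reads $\tilde p>\frac{4}{1-2/\tilde p}$, i.e.\ $\tilde p>6$, which is satisfied. This resolves the obstacle without needing the $p_1$ choice above. For $\Psi_x^b$, linear growth and \eqref{eq:est} with $p_1=\infty$, $q\ge1$ give the integrable singularity $s^{-1/(2q)}$. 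The map $S*b(\cdot)$ sends $L^1(0,T;L^\infty)$ into $C([0,T];C(D))$ after approximation, which I would get via $L^{\tilde p}(0,T;L^{\tilde p})$ and the $p_1=\tilde p$, $p_2=\infty$ smoothing bound $\|G\|_{L^{\tilde p'}}\lesssim t^{-1/(2\tilde p)}$, plus density.

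\textbf{Lipschitz bounds and random inputs.} These follow as in Steps 2a--2c of Lemma \ref{lem:reg_ET}. For $b$: $\|S(r-s)h\|_\infty\le\|h\|_\infty$, then H\"older. For $u$: Lemma \ref{lem:det conv} with $\|\sigma(\cdot+\phi)-\sigma(\cdot+\psi)\|_{L^{\tilde p}}\lesssim\|\phi-\psi\|_{F_s}$. For the noise: Theorem \ref{th:stoch conv C} applied on $[0,t]$. The extension to $L^{\tilde p}(\Om;F_T)$ is pointwise in $\om$, exactly as in Step 3.
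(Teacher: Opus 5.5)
Your final argument is the paper's proof. You use the $\lambda$-growth of $\sigma$ to reduce everything to $S(\cdot)x\in L^{\lambda\tilde p}([0,T]\times D)$, which is exactly $\tilde p<3q/\lambda$, then apply Lemma~\ref{lem:det conv} with $p_1=\tilde p$, $p_2=\infty$ (needing $\tilde p>6$) and Theorem~\ref{th:stoch conv C}, and handle the Lipschitz bounds and pathwise extension as in Lemma~\ref{lem:reg_ET}. Your opening detour is unnecessary, and its claim that $p_1$ near $2$ works is backwards, since Lemma~\ref{lem:det conv} with $p_2=\infty$ forces $p_1>2\tilde p/(\tilde p-4)$; the choice $p_1=\tilde p$ you settle on is the right one, and your continuity argument for $\Psi_x^b(\phi)$ is fine, though the paper more simply notes that $b(S(\cdot)x+\phi)\in L^1(0,T;C(D))$.
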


\begin{proof}
    The proof of Lemma \ref{lem:reg_ET} can be copied with the small   adjustments indicated below.  
Let $\tilde p,N,u,$ and $x$ be as in the statement. 

\textbf{Step 1a}: 
We have by the linear growth of $b$ and \eqref{eq:est}: 
\[ 
\|b(S(s)x+\phi(s))\|_{L^\infty(D)}
 \lesssim
1+\|S(s)x\|_{L^\infty(D)}
+\|\phi(s)\|_{C(D)} \lesssim_T
1+s^{-1/2}\|x\|_{L^1(D)}
+\|\phi(s)\|_{C(D)}.
\]  
Consequently, $b(S(\cdot)x+\phi) \in L^1(0,T;L^\infty(D))$ and since $S$ is a contractive semigroup on $L^\infty(D)$, 
\[ 
\|\Psi_x^b(\phi)(t)\|_{L^\infty(D)}
 \leq
\int_0^t\|b(S(s)x+\phi(s))\|_{L^\infty(D)} \dd s
 \lesssim_T
1+ \|x\|_{L^1(D)}
+ \|\phi\|_{F_T}. 
\]
For continuity  of $\Psi_x^b(\phi)$ in space and time, note that  $b(S(\cdot)x + \phi)\in L^1(0,T;C(D))$ by the smoothing properties of the heat semigroup and Lipschitz continuity of $b$.  Moreover, $S$ is strongly continuous on $C(D)$, so standard semigroup theory yields $S*b(S(\cdot)x + \phi)\in C([0,T];C(D))$.

\textbf{Step 1b}: 
We  derive an $L^{\lambda \tilde p}$ bound for the term $S(\cdot)x$. 
When $q \leq \lambda \tilde{p}$, \eqref{eq:est} guarantees that
\[
\|S(t)x\|_{L^{\lambda \tilde{p}}(D)} \lesssim_{T,\lambda, \tilde p,q} t^{-\frac{1}{2} (\frac{1}{q} - \frac{1}{\lambda \tilde p})}\|x\|_{L^q(D)}.
\]
When $q> \lambda \tilde{p}$, we have $L^q(D)\subset L^{\lambda \tilde{p}}(D)$  and  
\[\|S(t) x\|_{L^{\lambda \tilde{p}}(D)} \lesssim_{\lambda,\tilde p, q,|D|} 
\|S(t) x\|_{L^{q}(D)}\leq 
\|x\|_{L^{q}(D)}.\]
Combining both cases, we obtain  
\begin{align}\label{eq:Sx bnd C case}
  \|S(t)x\|_{L^{\lambda \tilde p}(D)}  
  \lesssim_{T,\lambda, \tilde p,q,|D|} t^{-\frac12((\frac1q-\frac{1}{\lambda \tilde p})\vee 0)}\|x\|_{L^q(D)}. 
\end{align}
If $q\leq \lambda \tilde p$, we have $S(\cdot)x\in L^{\lambda \tilde p}([0,T]\times D)$ if  $-\frac12(\frac1q-\frac{1}{\lambda \tilde p})\lambda \tilde p>-1$, or equivalently, $\tilde p<\frac{3q}{\lambda}.$ Our assumption that $\tilde p\in (6,\tfrac{3q}{\lambda})$ thus guarantees that $S(\cdot)x\in L^{\lambda \tilde p}([0,T]\times D)$.

Consequently, for $\phi\in F_T$, we have $\varphi\ceqq \sigma(S(\cdot)x+\phi)\in L^{\tilde p}([0,T]\times D)$  by \eqref{eq:varphi moment} applied with $p=\tilde p$ and noting that $F_T\into E_T$. Hence, for well-definedness of $\Psi_x^{\sigma,u}\col F_T\to F_T$,  we can apply Lemma \ref{lem:det conv} with $p_1\ceqq \tilde p$  and $p_2\ceqq\infty$. Note that for these $p_1$ and $p_2$, \eqref{eq:cond tilde p} holds if and only if $\tilde p\in(6,\infty)$, which we assumed. 

\textbf{Step 1c}: 
For well-definedness of $\Psi_x^{\sigma}\col L^{\tilde p}(\Om;F_T)\to L^{\tilde p}(\Om;F_T)$, we apply Theorem \ref{th:stoch conv C} instead of Proposition \ref{prop:stoch conv}.  Note that for $\phi\in L^{\tilde p}(\Om;F_T)$, we have $\varphi\ceqq \sigma(S(\cdot)x+\phi)\in L^{\tilde p}(\Om;L^{\tilde p}([0,T]\times D))$ by \eqref{eq:varphi moment 2} applied with $p=\tilde p$ and since $F_T\into E_T$. 

\textbf{Steps 2a--2c}: In the proofs of these steps, we can replace every $E_t$ by $F_t$ and $L^p(D)$ by $L^\infty(D)$, applying Lemma \ref{lem:det conv}   with $p_1=p_2=\infty$ in Step 2b and applying  Theorem \ref{th:stoch conv C} in Step 2c. 
 
\textbf{Step 3}: In the proof we can replace $E_T$ by $F_T$ and $L^p(D)$, $L^{\lambda p}(D)$ by $L^{\tilde p}(D)$, $L^{\lambda\tilde p}(D)$.  
\end{proof}

\begin{theorem}\label{th:wpnC}
Let the conditions in Theorem \ref{th:C} be satisfied. Let $\eps,N\in[0,\infty)$, $u\in \A_N$ (defined in \eqref{eq:AN}), and let  $x\in L^q(D)$. 

Then, there exists a mild solution $X_x^{\eps,u}$ to \eqref{eq:split X} in the sense of Definition \ref{def:sol}, which  satisfies 
\begin{align*}
&X_x^{\eps,u}\in L^{\lambda \tilde p}(0,T;L^{\lambda \tilde p}(D))+ L^{\tilde{p}}(\Omega;C([0,T];C(D)))\subset L^{\tilde p}(\Om;L^{\lambda \tilde p}(0,T;L^{\lambda \tilde p}(D)))\\
&\qquad\text{ for all }\quad \tilde{p}\in (6,\frac{3q}{\lambda}). 
\end{align*}
 Furthermore, $V^{\eps,u}_x$, $Y^{\eps,u}_x$, $Z^{\eps,u}_x$ (see  \eqref{eq:split Y Z}), and $X^{\eps,u}_x -S(\cdot)x$   all belong to  $L^{\tilde{p}}(\Omega;C([0,T];C(D)))$.  
 Uniqueness holds amongst mild solutions belonging to $\{S(\cdot)x+\phi:\phi\in L^{\tilde p}(\Om;C([0,T];C(D)))\}$. 
\end{theorem}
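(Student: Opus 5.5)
The proof will mirror that of Theorem \ref{th:wpnLp}, with $E_T$ replaced by $F_T=C([0,T];C(D))$ and Lemma \ref{lem:reg_ET} replaced by Lemma \ref{lem:regC}. Fix $\tilde p\in(6,\frac{3q}{\lambda})$; this interval is nonempty because $2\lambda<q$. We run the Picard scheme $\phi_0\ceqq 0$ and
\[
\phi_n\ceqq \Psi_x^b(\phi_{n-1})+\Psi_x^{\sigma,u}(\phi_{n-1})+\sqrt{\eps}\,\Psi_x^\sigma(\phi_{n-1})
\]
in $L^{\tilde p}(\Om;F_T)$.

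Lemma \ref{lem:regC}, including its last assertion about $u\in\A_N$ and random arguments, guarantees that each $\phi_n$ lies in $L^{\tilde p}(\Om;F_T)$. The Lipschitz bounds \eqref{eq:PsiLip C} give
\[
\|\phi_{n+1}-\phi_n\|^{\tilde p}_{L^{\tilde p}(\Om;F_t)}\lesssim\int_0^t\|\phi_n-\phi_{n-1}\|^{\tilde p}_{L^{\tilde p}(\Om;F_s)}\dd s .
\]
Induction then yields the factorial bound $C^nt^n/n!$. Summability of $(C^nT^n/n!)^{1/\tilde p}$ shows that $(\phi_n)$ is Cauchy in $L^{\tilde p}(\Om;F_T)$ with a limit $\phi$. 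By the same Tonelli argument as before, the convergence also holds a.s.\ in $F_T$.

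Set $X\ceqq S(\cdot)x+\phi$. Progressive measurability follows as before. The deterministic term $S(\cdot)x$ is Borel, hence progressive. The limit $\phi$ has an adapted continuous version, since each $\phi_n$ is adapted and the filtration is complete. Passing to the limit in the recursion uses \eqref{eq:PsiLip C}: the three $\Psi$ maps are Lipschitz from $L^{\tilde p}(\Om;F_T)$ to itself, so $\Psi(\phi_n)\to\Psi(\phi)$. This gives, a.s.\ for all $t$ and all $\xi$,
\[
\phi=\Psi_x^b(\phi)+\Psi_x^{\sigma,u}(\phi)+\sqrt\eps\,\Psi_x^\sigma(\phi),
\]
and therefore $X$ is a mild solution. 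The identifications $V=\Psi_x^b(\phi)$, $Y=\Psi_x^{\sigma,u}(\phi)$ and $Z=\Psi_x^\sigma(\phi)$ place these terms in $L^{\tilde p}(\Om;F_T)$. For the integrability claim, \eqref{eq:Sx bnd C case} with $\tilde p<3q/\lambda$ gives $S(\cdot)x\in L^{\lambda\tilde p}(0,T;L^{\lambda\tilde p}(D))$. The inclusion of $L^{\tilde p}(\Om;F_T)$ into $L^{\tilde p}(\Om;L^{\lambda\tilde p}([0,T]\times D))$ holds because $\lambda\le1$ and $D$ is bounded.

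For uniqueness, let $X^1,X^2$ be two solutions of the form $S(\cdot)x+\phi^j$ with $\phi^j\in L^{\tilde p}(\Om;F_T)$. Subtracting the fixed-point identities and applying \eqref{eq:PsiLip C} gives a Gronwall inequality for $t\mapsto\|\phi^1-\phi^2\|^{\tilde p}_{L^{\tilde p}(\Om;F_t)}$, which forces $\phi^1=\phi^2$.

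All the analytic work is contained in Lemma \ref{lem:regC}, so no step here should be a genuine obstacle. The one point to check is that the argument runs for every $\tilde p$ in the allowed range. The constructed solution a priori depends on $\tilde p$. It suffices to build it for $\tilde p$ near $3q/\lambda$ (or any fixed admissible $\tilde p$). Uniqueness in the larger class with a smaller $\tilde p'$ then shows that the solutions for different $\tilde p$ coincide, so the regularity statements hold for all $\tilde p$ simultaneously. Here we use that $L^{\tilde p}$ moments for a larger $\tilde p$ imply those for a smaller one.
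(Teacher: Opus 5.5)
Your proposal is correct and is essentially the paper's proof. The paper likewise reruns the Picard iteration of Theorem~\ref{th:wpnLp} in $L^{\tilde p}(\Om;F_T)$ with Lemma~\ref{lem:regC} in place of Lemma~\ref{lem:reg_ET}, reads off the regularity of $V_x^{\eps,u}$, $Y_x^{\eps,u}$, $Z_x^{\eps,u}$ from their identification with the $\Psi$-maps, and obtains uniqueness by Gr\"onwall. Your closing remark on consistency across $\tilde p$ supplies a detail the paper leaves implicit. Since the range $(6,\frac{3q}{\lambda})$ is open, a construction at one fixed $\tilde p_0$ only gives moments up to $\tilde p_0$, so drop the parenthetical ``or any fixed admissible $\tilde p$'' and construct for every $\tilde p$, then use your coincidence-by-uniqueness argument.
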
 
\begin{proof}
We can copy the Picard iteration scheme from the proof of Theorem \ref{th:wpnLp} with $E_T$ replaced by $F_T$. Since Lemma \ref{lem:regC} fully replaces Lemma  \ref{lem:reg_ET}, all arguments remain valid and yield existence and uniqueness of mild solutions belonging to $\{S(\cdot)x+\phi:\phi\in L^{\tilde p}(\Om;C([0,T];C(D)))\}$. Moreover, the regularity claims can be proved analogously, using the splitting    
$X_x^{\eps,u}=S(\cdot)x+V_x^{\eps,u}+Y_x^{\eps,u}+\sqrt{\eps}Z_x^{\eps,u}$ with $X_x^{\eps,u}-S(\cdot)x\in F_T$. The latter then implies that $V_x^{\eps,u}=\Psi_x^b(X_x^{\eps,u}-S(\cdot)x)\in F_T$, $Y_x^{\eps,u}=\Psi_{\sigma,u}(X_x^{\eps,u}-S(\cdot)x)\in F_T$, and $Z_x^{\eps,u}=\Psi_{\sigma}(X_x^{\eps,u}-S(\cdot)x)\in  F_T$. 
\end{proof}

As a consequence of Theorems \ref{th:wpnLp} and \ref{th:wpnC}, we obtain the following results for classical initial data and solution spaces.  For the uncontrolled case ($u=0$), corresponding results are classical, see e.g.\  \cite{daprato}. For the controlled problem,  part \textit{(2)} below   also follows from \cite[Th.\ 7.1]{Salins21reacdiff}, noting that the boundary conditions do not affect that result.

\begin{corollary}[Well-posedness, classical cases ($q=p$)]\label{cor:wpn classical} 
Suppose that Assumption \ref{asm} holds. Let $\eps,N \in[ 0,\infty)$  and $u\in \A_N$.  
\begin{enumerate}
\item  If $x\in L^p(D)$ and $\tilde p\in(4,\infty)$, then there exists a unique mild solution $X_x^{\eps,u}$ to \eqref{eq:split X} that belongs to $L^{\tilde p}(\Om;C([0,T]; L^p(D)))$. 
\item If $x\in C(D)$ and $\tilde p\in(6,\infty)$, then there exists a unique mild solution $X_x^{\eps,u}$ to \eqref{eq:split X} that belongs to $L^{\tilde p}(\Om;C([0,T];C(D)))$. 
\end{enumerate}
\end{corollary}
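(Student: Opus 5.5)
The plan is to derive Corollary \ref{cor:wpn classical} directly from Theorems \ref{th:wpnLp} and \ref{th:wpnC} by specializing to $q=p$ (resp.\ a large finite $q$). The only extra observation needed is that, for such initial data, $S(\cdot)x$ itself lies in the target path space. This lets us absorb it into the regular part.

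\textbf{Part (1).} Take $x\in L^p(D)$ and set $q\ceqq p$. We check that the hypotheses of Theorem \ref{th:Lp}, hence of Theorem \ref{th:wpnLp}, hold: $2\lambda p/(p+2)<p$ is trivial. Since $\lambda\le 1$, we have $q=p\ge\lambda p$, so the admissible range of $\tilde p$ in Theorem \ref{th:wpnLp} is all of $(4,\infty)$. Theorem \ref{th:wpnLp} then yields a mild solution $X=S(\cdot)x+\phi$ with $\phi\in L^{\tilde p}(\Om;C([0,T];L^p(D)))$. Strong continuity and contractivity of $S$ on $L^p(D)$ give $S(\cdot)x\in C([0,T];L^p(D))$, a deterministic element. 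Therefore $X\in L^{\tilde p}(\Om;C([0,T];L^p(D)))$.

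For uniqueness, let $\tilde X$ be any mild solution in $L^{\tilde p}(\Om;C([0,T];L^p(D)))$. Then $\tilde X-S(\cdot)x$ lies in the same space, so $\tilde X$ belongs to the uniqueness class $\{S(\cdot)x+\phi\}$ of Theorem \ref{th:wpnLp}, and $\tilde X=X$. Conversely, $X$ lies in that class, so the two classes coincide.

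\textbf{Part (2).} Take $x\in C(D)\subset L^q(D)$ for every $q<\infty$. Given $\tilde p\in(6,\infty)$, choose $q\in[1,\infty)$ large enough that $2\lambda<q$ and $\tilde p<3q/\lambda$. Theorem \ref{th:wpnC} then applies and gives a mild solution $X=S(\cdot)x+\phi$ with $\phi\in L^{\tilde p}(\Om;C([0,T];C(D)))$. Since $S$ is a strongly continuous contraction semigroup on $C(D)$, we have $S(\cdot)x\in C([0,T];C(D))$, so $X\in L^{\tilde p}(\Om;C([0,T];C(D)))$. Uniqueness follows exactly as in part (1) from the uniqueness statement of Theorem \ref{th:wpnC}.

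One small point needs attention: the solution should not depend on the auxiliary choice of $q$ (or of $\tilde p$). This follows from the uniqueness just established. Any two such constructions belong to $L^{\tilde p'}(\Om;C([0,T];C(D)))$ with $\tilde p'$ the minimum of the two exponents, so they coincide.

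There is no real obstacle here. The only step requiring care is the matching of parameter ranges, namely verifying that $q=p$ (resp.\ $q$ large) makes the $\tilde p$-range unrestricted, together with the identification of the two uniqueness classes.
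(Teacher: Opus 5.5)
Your proposal is correct and matches the paper's proof essentially step for step. You specialize Theorem \ref{th:wpnLp} to $q=p$, so that the $\tilde p$-range is all of $(4,\infty)$, and Theorem \ref{th:wpnC} to a large finite $q$ with $2\lambda<q$ and $\tilde p<3q/\lambda$; you then absorb the deterministic term $S(\cdot)x$, which lies in the target path space by strong continuity, so that the uniqueness classes coincide. Your extra remark that the solution does not depend on the auxiliary choice of $q$ is a harmless addition that the paper leaves implicit.
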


\begin{proof}
\textit{(1)}: Let $x\in L^p(D)$ and $\tilde p\in(4,\infty)$. The parameter constraint of Theorem \ref{th:Lp} holds for any $\lambda\in(0,1]$ if $q=p$.  Theorem \ref{th:wpnLp} applied with $q\ceqq p\geq\lambda p$ yields the existence of a mild solution $X_x^{\eps,u}$  with $X_x^{\eps,u}-S(\cdot)x\in L^{\tilde p}(\Om;C([0,T]; L^p(D)))$. Since $S$ is a strongly continuous semigroup on $L^p(D)$, we also have $S(\cdot)x\in C([0,T];L^p(D))$, thus $X_x^{\eps,u} \in L^{\tilde p}(\Om;C([0,T]; L^p(D)))$ and uniqueness follows from the  uniqueness  established in Theorem \ref{th:wpnLp}. 

\textit{(2)}: Let $x\in C(D)$ and $\tilde p\in(6,\infty)$. Fix a sufficiently large $q\in[1,\infty)$ such that $\tilde p<3q/\lambda$ and $2\lambda<q$.  
By Theorem \ref{th:wpnC}  we obtain a mild solution $X_x^{\eps,u}$ with $X_x^{\eps,u}-S(\cdot)x\in L^{\tilde p}(\Om;C([0,T];C(D)))$. Moreover, $S(\cdot)x\in C([0,T];C(D))$, thus $X_x^{\eps,u}$ has the stated regularity and uniqueness follows from the uniqueness established in Theorem \ref{th:wpnC}. 
\end{proof}

\section{ULDP on $C([0,T];\Cper)$ over $L^q$-bounded subsets of $\Cper$}\label{sec:C}

In this section we prove Theorem \ref{th:C}, using the sufficient condition stated in Theorem \ref{th:uldp suff}(1). 
To verify that condition, we will establish uniform estimates for $\|X_x^{\eps,u}-X_x^{0,u}\|_{L^{\tilde p}(\Om;C([0,T];\Cper))}$, for a suitable $\tilde p>6$. Note that we can write
\[
X_x^{\eps,u}-X_x^{0,u}=V_x^{\eps,u}-V_x^{0,u}+Y_x^{\eps,u}-Y_x^{0,u}+\sqrt{\eps}Z_x^{\eps,u}. 
\] 
As we will show in the proof of Theorem \ref{th:C}, the differences $V_x^{\eps,u}-V_x^{0,u}$ and $Y_x^{\eps,u}-Y_x^{0,u}$ can be estimated suitably in terms of $X_x^{\eps,u}-X_x^{0,u}$, and will disappear after an application of  Gr\"onwall's inequality, thus it does not lead to any $x$-dependence. In contrast, $Z_x^{\eps,u}(t,\xi)$ does depend on $x$, and we need to derive   estimates  for this term that are uniform with respect to $L^q$-bounded initial data $x$. 

Let us outline how we will deal with the term  $Z_x^{\eps,u}(t,\xi)$. 
Using Theorem \ref{th:stoch conv C}, it will be shown that for a suitable $\tilde p$, we have
\begin{align*}
\E[\|\eps^{1/2}Z_x^{\eps,u} \|_{C([0,T];C(D))}^{\tilde p}]
&\lesssim_{T, \tilde p,\lambda}\eps^{\tilde p/2}\E\Big[\int_0^T\int_D 1+ |X_x^{\eps,u}-X_x^{0,u}|^{\tilde p}+|X_x^{0,u}|^{\lambda \tilde p}\dd\xi\dd t\Big].  
\end{align*} 
Since we study the limit $\eps\downarrow 0$, we can assume that $\eps\leq \eps_0$, and the second term in the sum above will be absorbed through Gr\"onwall's inequality. 
Due to the prefactor $\eps^{\tilde p/2}$ above, it then remains to prove that  
\begin{equation*} 
  \sup_{u\in\A_N}\sup_{x\in B_R^q\cap C(D)} \E\Big[\int_0^T\int_D  |X_x^{0,u}(t,\xi)|^{\lambda \tilde p}\dd\xi\dd t\Big]<\infty.
\end{equation*} 
In Subsection \ref{sub:I2 C}, we will deal with $X_x^{0,u}$, and establish an even stronger, almost sure bound, which is uniform over $x\in B^q_R$. In Subsection \ref{sub:concl C}, we provide the proof of Theorem \ref{th:C}.

\subsection{Uniform $L^{\lambda\tilde p}$-bound for $X_x^{0,u}$}\label{sub:I2 C}

In the next lemma, we prove an almost sure, uniform  bound for solutions $X_x^{0,u}$ to equation \eqref{eq:split X} with $\eps=0$. 
This bound is uniform with respect to $L^2$-bounded (forcing) functions $u$, and uniform with respect to $L^q$-bounded initial data $x$.

\begin{lemma} 
\label{lem:conv Linfty} Let Assumption \ref{asm} hold. Let $N>0$ and $q\in[1,\infty)$. 
  Suppose that $\lambda\in [0,\frac{q}{2})\cap (0,1]$ and $\tilde p\in (6,\frac{3q}{\lambda})$. 
  Then, there exists a constant $M>0$ such that for all $u\in\A_N$ and  $x\in L^q(D)$: 
  \begin{equation}\label{eq:toshow}
\int_0^T\int_D  |X_x^{0,u}(t,\xi)|^{\lambda \tilde p}\dd\xi\dd t \leq M(1+\|x\|_{L^q(D)}^{\lambda \tilde p}) \quad\text{a.s.}
  \end{equation}
  The value of $M$ depends only on $T,\lambda,\tilde p,q,|D|,$ and $N$.
\end{lemma}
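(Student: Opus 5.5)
We work pathwise in $\om$ and use the splitting $X_x^{0,u}=S(\cdot)x+\phi$ with $\phi\ceqq V_x^{0,u}+Y_x^{0,u}$, where $\phi\in F_T$ a.s.\ by Theorem \ref{th:wpnC}. By \eqref{eq:Sx bnd C case} and the constraint $\tilde p<3q/\lambda$, the contribution of the semigroup term is already controlled:
\[
\int_0^T\|S(t)x\|_{L^{\lambda\tilde p}(D)}^{\lambda\tilde p}\dd t\lesssim \|x\|_{L^q(D)}^{\lambda\tilde p}.
\]
Since $|X|^{\lambda\tilde p}\lesssim |S(\cdot)x|^{\lambda\tilde p}+|\phi|^{\lambda\tilde p}$ and $\lambda\le 1$, it then suffices to show
\[
\|\phi\|_{F_T}^{\tilde p}\lesssim 1+\|x\|_{L^q(D)}^{\lambda\tilde p}\quad\text{a.s.},
\]
uniformly in $u\in\A_N$. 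From this, $\int\!\!\int|\phi|^{\lambda\tilde p}\le |D|T\|\phi\|_{F_T}^{\lambda\tilde p}\lesssim 1+\|\phi\|_{F_T}^{\tilde p}$ gives \eqref{eq:toshow}.

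\textbf{Step 1: bounds on $V$ and $Y$.} For $t\in(0,T]$ we estimate $\|\phi\|_{F_t}$ through the two pieces separately.

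For $V_x^{0,u}$, as in Step 1a of Lemma \ref{lem:regC}, we use the linear growth of $b$ and $\|S(s)x\|_{L^\infty}\lesssim s^{-1/2}\|x\|_{L^1}$ to get
\[
\|V_x^{0,u}\|_{F_t}\lesssim 1+\|x\|_{L^1(D)}+\int_0^t\|\phi\|_{F_s}\dd s .
\]
The term $\|x\|_{L^1}$ appears with power one, not $\lambda$, so I would not use this crude bound for $b(S x)$. Instead I would use $|b(S(s)x+\phi)|\le C(1+|S(s)x|+|\phi|)$ and estimate $S*|S(\cdot)x|$ directly. By positivity of the kernel and the semigroup property, $\int_0^t S(t-s)|S(s)x|\dd s\le t\,S(t)|x|$, which is still of size $\|x\|_{L^1}$ in $C(D)$ after time $t$. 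This is therefore genuinely linear in $\|x\|$. The plan is to absorb this via the inequality $\|x\|_{L^1}\lesssim\|x\|_{L^q}$, obtaining a bound of the form $(1+\|x\|_{L^q}^{\lambda\tilde p}+\|x\|_{L^q}^{\tilde p})$ from this term.

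This is the main obstacle. If the stated power $\lambda\tilde p$ must be exact, I would handle $b$ by first taking $\lambda\tilde p$ powers: $|b(z)|\lesssim 1+|z|$, and in the $L^{\lambda\tilde p}$ norm (rather than the sup norm) $\|S*b(Sx)\|_{L^{\lambda\tilde p}_{t,\xi}}\lesssim 1+\|S(\cdot)x\|_{L^{\lambda\tilde p}_{t,\xi}}\lesssim 1+\|x\|_{L^q}$. This gives the power $\lambda\tilde p$ after raising to $\lambda\tilde p$. Accordingly, I would run the whole Gr\"onwall argument with the target quantity $\int_0^T\|X\|_{L^{\lambda\tilde p}}^{\lambda\tilde p}$, or more robustly with $\sup_s\|\phi(s)\|$ measured in $L^{\lambda\tilde p}$ and $F$ together.

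For $Y_x^{0,u}$ we apply Lemma \ref{lem:det conv} with $p_1=\tilde p$, $p_2=\infty$ (admissible since $\tilde p>6$) and $\varphi=\sigma(S(\cdot)x+\phi)$. With the growth bound and \eqref{eq:varphi moment}-type estimates, this gives
\[
\|Y_x^{0,u}\|_{F_t}^{\tilde p}\lesssim_N \int_0^t\big(1+\|S(s)x\|_{L^{\lambda\tilde p}}^{\lambda\tilde p}+\|\phi(s)\|_{C(D)}^{\lambda\tilde p}\big)\dd s\lesssim 1+\|x\|_{L^q}^{\lambda\tilde p}+\int_0^t\|\phi\|_{F_s}^{\tilde p}\dd s .
\]
This term produces exactly the power $\lambda\tilde p$, which explains the form of the claim.

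\textbf{Step 2: Gr\"onwall and uniformity.} Combining, $f(t)\ceqq\|\phi\|_{F_t}^{\tilde p}$ satisfies $f(t)\le K(1+\Lambda(x))+K\int_0^t f(s)\dd s$, where $\Lambda(x)$ collects the initial-data contribution and $K$ depends only on $T,\lambda,\tilde p,q,|D|,N$. Gr\"onwall's inequality gives $f(T)\le Ke^{KT}(1+\Lambda(x))$. Since every constant is uniform in $u\in\A_N$ (only $\|u\|_{L^2}\le N$ enters) and in $x$, the bound holds a.s.\ with a deterministic $M$.

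The delicate point is the $b$-contribution. I expect to resolve it by bounding $V$ in the $L^{\lambda\tilde p}([0,T]\times D)$ norm rather than the sup norm. This is enough for \eqref{eq:toshow}, since the target quantity is itself an $L^{\lambda\tilde p}$ norm; the Gr\"onwall loop closes in the sup norm, where the $b$-terms are Lipschitz in $\phi$.
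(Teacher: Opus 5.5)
\textbf{Verdict: the proposal does not close as written, but the fix is short.} Your first paragraph reduces the lemma to a statement that is false when $\lambda<1$. The bound $\|\phi\|_{F_T}^{\tilde p}\lesssim 1+\|x\|_{L^q(D)}^{\lambda\tilde p}$ cannot hold in general. Take $b(z)=z$, $\sigma\equiv 0$ and $x\equiv c$. Then $X_x^{0,u}(t)=e^t c$ and $\phi(t)=(e^t-1)c$, so $\|\phi\|_{F_T}^{\tilde p}\simeq |c|^{\tilde p}$, while the right-hand side is of order $1+|c|^{\lambda\tilde p}$.

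So the ``obstacle'' you hit with the $b$-term is real for that target, and the argument never reaches the lemma. In Step~2 the quantity $\Lambda(x)$ is never specified. The remedy in your last paragraph is also not coherent. A Gr\"onwall loop for $f(t)=\|\phi\|_{F_t}^{\tilde p}$ needs a sup-norm bound on $V_x^{0,u}$, and an $L^{\lambda\tilde p}([0,T]\times D)$ bound on $V_x^{0,u}$ cannot be fed into it. Moreover, that $L^{\lambda\tilde p}$ bound depends on $X_x^{0,u}$ itself, so it would need its own Gr\"onwall argument in $L^{\lambda\tilde p}$, which is what the paper does.

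The repair is that your target was needlessly strong. When you wrote $\|\phi\|_{F_T}^{\lambda\tilde p}\lesssim 1+\|\phi\|_{F_T}^{\tilde p}$, you threw away the factor $\lambda$. Your own sup-norm estimates, with $\Lambda(x)=\|x\|_{L^q(D)}^{\tilde p}$ (using $\|x\|_{L^1(D)}\lesssim\|x\|_{L^q(D)}$ and $a^{\lambda\tilde p}\le 1+a^{\tilde p}$), give
$f(t)\le K(1+\|x\|_{L^q(D)}^{\tilde p})+K\int_0^t f(s)\dd s$, with $f$ finite by Theorem~\ref{th:wpnC}.
Gr\"onwall then yields $\|\phi\|_{F_T}\lesssim 1+\|x\|_{L^q(D)}$. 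Hence $\int_0^T\int_D|\phi|^{\lambda\tilde p}\dd\xi\dd t\le T|D|\,\|\phi\|_{F_T}^{\lambda\tilde p}\lesssim 1+\|x\|_{L^q(D)}^{\lambda\tilde p}$, and together with your bound on $S(\cdot)x$ this proves the lemma.

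The point you made for the $L^{\lambda\tilde p}$ norm of $S*b(S(\cdot)x)$ applies verbatim in the sup norm. A bound linear in $\|x\|$ becomes the right power once raised to $\lambda\tilde p$, because only the $\lambda\tilde p$-th power of $\phi$ is integrated.

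\textbf{Comparison with the paper.} With this fix your route differs from the paper's. The paper closes Gr\"onwall for $t\mapsto\|X_x^{0,u}\|_{L^{\lambda\tilde p}([0,t]\times D)}^{\lambda\tilde p}$. It uses Lemma~\ref{lem:det conv} with $p_1=p_2=\tilde p$ and raises the $Y$-bound to the power $\lambda$, which is the same use of $\lambda$ your target discarded. It also needs a separate Jensen step when $\lambda\tilde p<1$.

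The corrected sup-norm version uses $p_2=\infty$ instead. It needs no case split, since pointwise $|a+b|^r\lesssim_r |a|^r+|b|^r$ holds for every $r>0$. It also gives the stronger by-product $\|X_x^{0,u}-S(\cdot)x\|_{C([0,T];C(D))}\lesssim 1+\|x\|_{L^q(D)}$, uniformly in $u\in\A_N$.
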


\begin{proof}[Proof of Lemma \ref{lem:conv Linfty}]
The condition $\lambda <{q}/{2}$ is there only to  ensure that the range for $\tilde p$ is non-empty. 
Note that $X_x^{0,u}(\om)=X_x^{0,u(\om)}$ for a.e.\ $\om\in\Om$, i.e.\ stochasticity only enters into $X_x^{0,u}$ through $u(\om)$. Therefore, recalling the definition of $\A_N$, it suffices to prove that for all $u\in L^2(0,T;L^2(D))$ with $\|u\|_{L^2(0,T;L^2(D))}\leq N$, the bound in \eqref{eq:toshow} holds.  

Let $u\in L^2(0,T;L^2(D))$  with $\|u\|_{L^2(0,T;L^2(D))}\leq N$. We will apply Gr\"onwall's inequality to $t\mapsto  \|X_x^{0,u}\|_{L^{\lambda \tilde p}([0,t]\times D)}$.   
Using the notation of \eqref{eq:split Y Z}, we have
\[
  X_x^{0,u}(t,\xi)= 
  [S(t)x](\xi)+V_x^{0,u}(t,\xi)+Y_x^{0,u}(t,\xi). 
\]
By \eqref{eq:Sx bnd C case}, we have for any $\tilde p\in (6,\frac{3q}{\lambda})$:  
\begin{equation}\label{eq:Sx est}
\|S(\cdot)x\|_{L^{\lambda \tilde p}([0,T]\times D)}\lesssim_{T,\lambda,\tilde p,q,|D|} \|x\|_{L^q(D)}.
\end{equation} 
For $V_x^{0,u}$,  assuming for the moment that $\lambda\tilde p\geq 1$, we have 
by Minkowki's inequality, H\"older's inequality and the linear growth of $b$:
\begin{align*}
\|V_x^{0,u}(t,\cdot)\|_{L^{\lambda\tilde p}(D)}^{\lambda\tilde p}
&\leq \Big(\int_0^t\|S(t-s)b(X_x^{0,u}(s,\cdot))\|_{L^{\lambda\tilde p}(D)}\dd s\Big)^{\lambda\tilde p}\\
&\lesssim_{T,\lambda,\tilde p} \int_0^t\|S(t-s)b(X_x^{0,u}(s,\cdot))\|_{L^{\lambda\tilde p}(D)}^{\lambda\tilde p}\dd s \\
&\leq \int_0^t\|b(X_x^{0,u}(s,\cdot))\|_{L^{\lambda\tilde p}(D)}^{\lambda\tilde p}\dd s\\
&\lesssim_{T,\lambda,\tilde p,|D|} 1+ \|X_x^{0,u}\|_{L^{\lambda\tilde p}([0,t]\times D)}^{\lambda\tilde p}.
\end{align*}
For the convolution term $Y_x^{0,u}$, we apply Lemma \ref{lem:det conv}   with $p_1=p_2= \tilde p>6$ and $\varphi\ceqq \sigma(X_x^{0,u})$, and use the growth bound \eqref{eq:growth} for $\sigma$ and the fact that $\lambda \in (0,1]$, giving  for all $t\in[0,T]$:  
\begin{align*}
  \|Y_x^{0,u}(t,\cdot)\|_{L^{\lambda\tilde p}(D)}^{\tilde p}
  \lesssim_{\lambda,\tilde p,|D|}\|Y_x^{0,u}(t,\cdot)\|_{L^{\tilde p}(D)}^{\tilde p} 
  &\lesssim_{\tilde p,N,T} \int_0^t \|\sigma(X_x^{0,u}(s,\cdot))\|_{L^{\tilde p}(D)}^{\tilde p}\dd s \\
  &\lesssim_{\tilde p,|D|} \int_0^t \big(1+\|X_x^{0,u}(s,\cdot)\|_{L^{\lambda \tilde p}(D)}^{\lambda \tilde p}\big)\dd s\\ 
  &\lesssim_T 1 + \|X_x^{0,u}\|_{L^{\lambda \tilde p}([0,t]\times D)}^{\tilde p}.
\end{align*}
Raising this estimate to the power $\lambda$ gives
\begin{align*}
  \|Y_x^{0,u}(t,\cdot)\|_{L^{\lambda\tilde p}(D)}^{\lambda\tilde p} \lesssim_{\lambda,\tilde p,|D|, N,T}   1 + \|X_x^{0,u}\|_{L^{\lambda\tilde p}([0,t]\times D)}^{\lambda\tilde p}.
\end{align*}
For all $t\in[0,T]$, we conclude that 
\begin{align*}
 \|X_x^{0,u}(t,\cdot)\|_{L^{\lambda \tilde p}(D)}^{\lambda \tilde p}
 &\lesssim_{\lambda, \tilde p} \|S(t)x\|_{L^{\lambda \tilde p}(D)}^{\lambda \tilde p}+\|V_x^{0,u}(t,\cdot)\|_{L^{\lambda \tilde p}(D)}^{\lambda \tilde p}+\|Y_x^{0,u}(t,\cdot)\|_{L^{\lambda \tilde p}(D)}^{\lambda \tilde p} \\
 &\lesssim_{\lambda,\tilde p,|D|, N,T} \|S(t)x\|_{L^{\lambda \tilde p}(D)}^{\lambda \tilde p}+ 1 + \|X_x^{0,u} \|_{L^{\lambda \tilde p}([0,t]\times D)}^{\lambda \tilde p}. 
\end{align*}
Hence, using also \eqref{eq:Sx est} and integrating over $t$,  we find  for all $0\leq  T_1\leq T$:
\begin{align*}
\|X_x^{0,u} \|_{L^{\lambda \tilde p}([0,T_1]\times D)}^{\lambda \tilde p}&= \int_0^{T_1}\|X_x^{0,u}(t,\cdot)\|_{L^{\lambda \tilde p}(D)}^{\lambda \tilde p}\dd t\\
 &\lesssim_{T,\lambda,\tilde p,q,|D|, N} \|x\|_{L^q(D)}^{\lambda \tilde p}+ 1 +\int_0^{T_1} \|X_x^{0,u} \|_{L^{\lambda \tilde p}([0,t]\times D)}^{\lambda \tilde p}\dd t.  
\end{align*} 
Gr\"onwall's inequality yields
\[
\|X_x^{0,u} \|_{L^{\lambda \tilde p}([0,T]\times D)}^{\lambda \tilde p}\leq \hat C(\|x\|_{L^q(D)}^{\lambda \tilde p}+ 1)\exp(\hat C),
\]
where $\hat C$ is a constant depending only on $T,\lambda,\tilde p,q,|D|,$ and $N$. Setting  $M\ceqq\hat C \exp(\hat C)$, this concludes the proof if $\lambda\tilde p\geq 1$.  

If $\lambda\tilde p<1$, then we put $\tilde p_*\ceqq 1/\lambda$. Note that $\tilde p_*\lambda=1$ and $\tilde p_*\in(6,\frac{3q}{\lambda})$ since $\tilde p_*>\tilde p>6$ and $q\geq 1$. Thus the proof applies with $\tilde p=\tilde p_*$ and gives for all $u\in\A_N$: 
\[
\int_0^T\int_D  |X_x^{0,u}(t,\xi)| \dd\xi\dd t \leq M(1+\|x\|_{L^q(D)})\quad \text{ a.s.}
\]
Combined with Jensen's inequality (for the concave function $|\cdot|^{\lambda \tilde p}$), we obtain a.s.
\begin{align*}
\int_0^T\int_D  |X_x^{0,u}(t,\xi)|^{\lambda \tilde p}\dd\xi\dd t\lesssim_{|D|,T,\lambda,\tilde p}
    \Big(\int_0^T\int_D  |X_x^{0,u}(t,\xi)| \dd\xi\dd t\Big)^{\lambda\tilde p} 
    &\leq (M(1+\|x\|_{L^q(D)}))^{\lambda\tilde p}\\
    &\leq M^{\lambda\tilde p}(1+\|x\|_{L^q(D)}^{\lambda \tilde p}).
\end{align*} 
Redefining $M$, we again obtain \eqref{eq:toshow}.
\end{proof}

\subsection{Conclusion of the ULDP}\label{sub:concl C}

Combining Lemma \ref{lem:conv Linfty} with   results from Section \ref{sec:wpn estimates}, we can now prove the ULDP of Theorem \ref{th:C}. 

\begin{proof}[Proof of Theorem \ref{th:C}] 
Fix any $\tilde p\in(6,\frac{3q}{\lambda})$ and $N,R\in[0,\infty)$. For all $u\in\A_N$ and $x\in L^q(D)$,  Theorem \ref{th:wpnC} provides a unique mild solution  $X_x^{\eps,u}\in L^{\tilde p}(\Om; L^{\lambda\tilde p}(0,T;L^{\lambda\tilde p}(D)))$ to \eqref{eq:split X}. Using this regularity and the growth condition in \eqref{eq:growth}, we have $\sigma(X_x^{\eps,u})\in  
L^{\tilde p}(\Om;L^{\tilde p}(0,T;L^{\tilde p}(D)))$. 
Therefore, we may apply Theorem \ref{th:stoch conv C} with $\Phi\ceqq \sigma(X_x^{\eps,u})$, giving together with \eqref{eq:growth}:  
\begin{align*}
\E[ \|\eps^{1/2}Z_x^{\eps,u}&\|_{C([0,T];\Cper)}^{\tilde p}] 
\lesssim_{\tilde p,T}\eps^{\tilde p/2}\E\Big[\int_0^T\int_D 1+|X_x^{\eps,u}|^{\lambda \tilde p}\dd\xi\dd t\Big]\\
&\lesssim_{\lambda, \tilde p}\eps^{\tilde p/2}\E\Big[\int_0^T\int_D 1+ |X_x^{\eps,u}-X_x^{0,u}|^{\tilde p}+|X_x^{0,u}|^{\lambda \tilde p}\dd\xi\dd t\Big] \\
&\lesssim_{T,|D|}\eps^{\tilde p/2}\bigg(1+\int_0^T \E\big[\|X_x^{\eps,u}-X_x^{0,u}\|_{C([0,t];\Cper)}^{\tilde p}\big]\dd t+\E\Big[\int_0^T\int_D  |X_x^{0,u}|^{\lambda \tilde p}\dd\xi\dd t\Big]\bigg). 
\end{align*}
In the third line we used that $(|z_1|+|z_2|)^{\lambda \tilde p}\leq 2^{\lambda \tilde p-1}( |z_1|^{\lambda \tilde p}+|z_2|^{\lambda \tilde p})$, and we used that $|z|^\lambda\leq 1+|z|$ since $\lambda\in(0,1]$. 
For the last term in the upper bound above, we have by Lemma \ref{lem:conv Linfty} (see \eqref{eq:BqR}): 
\[
M_*\ceqq\sup_{u\in\A_N}\sup_{x\in B_R^q} \E\Big[\int_0^T\int_D  |X_x^{0,u}(t,\xi)|^{\lambda\tilde p}\dd\xi\dd t\Big]\lesssim_{T,\lambda,\tilde p,q,|D|, N,R}1<\infty.
\]
Furthermore, by \eqref{eq:PsiLip C} of Lemma  \ref{lem:regC} ($V_x^{\eps,u}=\Psi_x^b(X_x^{\eps,u}-S(\cdot)x)$, $Y_x^{\eps,u}=\Psi_x^{\sigma,u}(X_x^{\eps,u}-S(\cdot)x)$ for $\eps\geq 0$) and the Tonelli's theorem, we have for all $u\in\A_N$ and $x\in L^q(D)$: 
\begin{align*}
\E[&\|V_x^{\eps,u}-V_x^{0,u}\|_{C([0,T_1];\Cper)}^{\tilde p}]+
\E[\|Y_x^{\eps,u}-Y_x^{0,u}\|_{C([0,T_1];\Cper)}^{\tilde p}]\\
&\lesssim_{N,T,\tilde p}
\int_0^{T_1}\E[\|X_x^{\eps,u}-X_x^{0,u}\|_{C([0,t];\Cper)}^{\tilde p}]\dd t.
\end{align*}
Combining all estimates above, we find that for all $0\leq T_1\leq T$,   $\eps_0\in(0,\infty)$, $\eps\in(0,\eps_0]$, $u\in\A_N$, and $x\in B_R^q$, it holds that
\begin{align*}
\E[&\|X_x^{\eps,u}-X_x^{0,u}\|_{C([0,T_1];\Cper)}^{\tilde p}] \\&\lesssim_{\tilde p}\E[\|V_x^{\eps,u}-V_x^{0,u}\|_{C([0,T_1];\Cper)}^{\tilde p}] + \E[\|Y_x^{\eps,u}-Y_x^{0,u}\|_{C([0,T_1];\Cper)}^{\tilde p}] + \eps^{\tilde p/2}\E[\|Z_x^{\eps,u}\|_{C([0,T_1];\Cper)}^{\tilde p}] \\ &\lesssim_{T,\lambda,\tilde p,q,|D|, N,R}\int_0^{T_1} (1+\eps_0^{\tilde p/2})\E[\|X_x^{\eps,u}-X_x^{0,u}\|_{C([0,t];\Cper)}^{\tilde p}]\dd t+\eps^{\tilde p/2}(1+M_*). 
\end{align*} 
Gr\"onwall's inequality gives that 
\[
\E[\|X_x^{\eps,u}-X_x^{0,u}\|_{C([0,T];\Cper)}^{\tilde p}]\leq \eps^{\tilde p/2}C_1,
\]
where $C_1$ is a constant that depends only on $T,\lambda,\tilde p,q,|D|, N,R$, and $\eps_0$. Finally,  Chebychev's inequality provides the convergence in probability as $\eps\to 0$ that allows us to apply Theorem \ref{th:uldp suff}(1), completing the proof of the ULDP. 
\end{proof}

\section{ULDP on $C([0,T];L^p(D))$ over $L^q$-bounded subsets of $L^p(D)$}\label{sec:Lp}

In this section, we prove Theorem \ref{th:Lp}.  We will verify the sufficient criterion from Theorem \ref{th:uldp suff}(2), using an analogous series of estimates as in Section \ref{sec:C}, but with $L^p(D)$ replacing $C(D)$. Eventually, we will prove  uniform bounds for $ \| X_x^{\eps,u} - X_x^{0,u} \|_{L^{\tilde p}(\Om;C([0,T];L^{p}(D)))}$ for a suitable ${\tilde p}>4$. 

Recall the splitting  $X_x^{\eps,u}-X_x^{0,u}=V_x^{\eps,u}-V_x^{0,u}+Y_x^{\eps,u}-Y_x^{0,u}+\sqrt{\eps}Z_x^{\eps,u}$ (see \eqref{eq:split X}). 
Again, we will absorb the differences $V_x^{\eps,u}-V_x^{0,u}$ and $Y_x^{\eps,u}-Y_x^{0,u}$ in our Gr\"onwall estimate, and are left with bounding $Z_x^{\eps,u}$. As in Section \ref{sec:C}, we will reduce this problem to bounding $X_x^{0,u}$ uniformly in $u$, $x$, and $\eps$. The latter will be achieved in Subsection \ref{sub:I2}.

\subsection{Uniform $L^{\lambda\tilde p}\times L^p$-bound for $X_x^{0,u}$}\label{sub:I2} 

The following result will be crucial for controlling the term $Z^{\eps,u}_x$ in the proof of our main result. It can be viewed as an analogue of Lemma \ref{lem:conv Linfty}. 
We note that the stated range for $\tilde p$ is always non-empty: this follows from \eqref{eq:I1 final constraints} when $q<\lambda p$, and holds trivially when $q\geq \lambda p$.

\begin{lemma}\label{lem:I2}
Let Assumption \ref{asm} hold. 
 Let $N>0$. Suppose that $p\in[2,\infty), q\in[1,\infty)$ and $\lambda\in(0,1]$ satisfy 
\begin{equation}\label{eq:I1 final constraints}
\frac{2\lambda p}{p+2}< q.
\end{equation} 
Suppose that $\tilde{p}\in(4,\frac{2pq}{\lambda p-q})$ if $q< \lambda p$, and $\tilde{p}\in(4,\infty)$ if  $q\geq \lambda p$. 

Then, there exists a constant $M\in[0,\infty)$ such that for all $u\in\A_N$ and $x\in L^q(D)$: 
\begin{equation*} 
\int_0^T\|X_x^{0,u}(s)\|_{L^{\lambda p}(D)}^{\lambda\tilde p}\dd s \leq M (1+\|x\|_{L^q(D)}^{\lambda\tilde p})\quad \text{a.s.}
\end{equation*}
The constant $M$ depends only on $T,\lambda,p,\tilde p,q,|D|,$ and $N$.
\end{lemma}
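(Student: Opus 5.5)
We mirror the proof of Lemma \ref{lem:conv Linfty}, replacing the space–time norm $L^{\lambda\tilde p}([0,T]\times D)$ by the mixed norm $L^{\lambda\tilde p}(0,T;L^{\lambda p}(D))$. Since $X_x^{0,u}(\om)=X_x^{0,u(\om)}$, it suffices to fix a deterministic $u$ with $\|u\|_{L^2(0,T;L^2(D))}\le N$. We then apply Gr\"onwall's inequality to
\[
t\mapsto \int_0^t\|X_x^{0,u}(s)\|_{L^{\lambda p}(D)}^{\lambda\tilde p}\dd s ,
\]
using the splitting $X_x^{0,u}=S(\cdot)x+V_x^{0,u}+Y_x^{0,u}$. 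Finiteness of this quantity is guaranteed by Theorem \ref{th:wpnLp}. The semigroup term is already handled by \eqref{eq:Sx Lp}, which gives $\int_0^T\|S(s)x\|_{L^{\lambda p}(D)}^{\lambda\tilde p}\dd s\lesssim \|x\|_{L^q(D)}^{\lambda\tilde p}$ precisely under the stated ranges of $q$ and $\tilde p$.

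Suppose first that $\lambda p\ge 1$ and $\lambda\tilde p\ge1$. For $V_x^{0,u}$ we combine Minkowski's inequality, the contractivity of $S$ on $L^{\lambda p}(D)$, H\"older's inequality in time and the linear growth of $b$. This gives
\[
\|V_x^{0,u}(t)\|_{L^{\lambda p}}^{\lambda\tilde p}\lesssim 1+\int_0^t\|X_x^{0,u}(s)\|_{L^{\lambda p}}^{\lambda\tilde p}\dd s .
\]
For $Y_x^{0,u}$ we use $\|\cdot\|_{L^{\lambda p}}\lesssim\|\cdot\|_{L^p}$ and apply Lemma \ref{lem:det conv} with $p_1=p_2=p$ and $\varphi=\sigma(X_x^{0,u})$; condition \eqref{eq:cond tilde p} reads $\tilde p>4$. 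This yields
\[
\|Y_x^{0,u}(t)\|_{L^p}^{\tilde p}\lesssim_{N,T}\int_0^t\|\sigma(X_x^{0,u}(s))\|_{L^p}^{\tilde p}\dd s\lesssim 1+\int_0^t\|X_x^{0,u}(s)\|_{L^{\lambda p}}^{\lambda\tilde p}\dd s ,
\]
where the last step uses $|\sigma(z)|^{p}\lesssim 1+|z|^{\lambda p}$. Raising to the power $\lambda\le1$ and using $(1+a)^\lambda\le 1+a$ gives the same bound for $\|Y_x^{0,u}(t)\|_{L^{\lambda p}}^{\lambda\tilde p}$. Summing the three terms and integrating over $t\in[0,T_1]$ produces
\[
\int_0^{T_1}\|X_x^{0,u}\|_{L^{\lambda p}}^{\lambda\tilde p}\lesssim \|x\|_{L^q}^{\lambda\tilde p}+1+\int_0^{T_1}\int_0^t\|X_x^{0,u}(s)\|_{L^{\lambda p}}^{\lambda\tilde p}\dd s\dd t ,
\]
and Gr\"onwall's inequality concludes with a constant depending only on $T,\lambda,p,\tilde p,q,|D|,N$.

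The main obstacle is the degenerate case where $\lambda p<1$ or $\lambda\tilde p<1$. There $\|\cdot\|_{L^{\lambda p}}$ is only a quasi-norm, so Minkowski's inequality fails for $V_x^{0,u}$, and H\"older's inequality in time needs $\lambda\tilde p\ge1$. We would handle this as in Lemma \ref{lem:conv Linfty}: replace the exponents by larger ones $p_*\ge p$ and $\tilde p_*\ge\tilde p$ with $\lambda p_*\ge1$ and $\lambda\tilde p_*\ge1$, prove the bound for those, and go back down using H\"older/Jensen on the finite measure spaces $D$ and $[0,T]$. The delicate point is checking that the enlarged exponents still satisfy the admissibility constraints. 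The condition $\frac{2\lambda p_*}{p_*+2}<q$ is safe when $q\ge1$ and $\lambda\le1$, since $\frac{2\lambda p_*}{p_*+2}<2\lambda$ and in the relevant regime $\lambda p_*$ can be taken equal to $1\le q$. In that regime $q\ge\lambda p_*$, so there is no upper restriction on $\tilde p_*$. Alternatively, one can avoid the quasi-norm issue by estimating $V_x^{0,u}$ and $Y_x^{0,u}$ directly in $L^p(D)$, which is a genuine norm since $p\ge2$, and only then passing to $L^{\lambda p}$ by H\"older.
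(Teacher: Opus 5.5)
Your main argument is the paper's proof. It has the same reduction to deterministic $u$ and the same Gr\"onwall functional $T_1\mapsto\int_0^{T_1}\|X_x^{0,u}(s)\|_{L^{\lambda p}(D)}^{\lambda\tilde p}\dd s$. The three terms are handled as in the paper: $S(\cdot)x$ via \eqref{eq:Sx Lp}, $V_x^{0,u}$ via Minkowski, contractivity and H\"older, and $Y_x^{0,u}$ via Lemma~\ref{lem:det conv} with $p_1=p_2=p$. The degenerate case uses the same enlargement of exponents followed by Jensen. There is, however, a hole in the step you yourself flag as delicate: checking that the enlarged exponents are admissible.

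\textbf{The missing subcase.} You only treat the regime $\lambda p<1$. There $p_*=1/\lambda$, and $q\ge 1=\lambda p_*$ removes the upper constraint on $\tilde p_*$. But the degenerate case also contains $\lambda p\ge 1>\lambda\tilde p$, which occurs when $\lambda<\frac14$ and $p\ge\frac1\lambda$. In that subcase you must keep $p_*=p$, since Jensen only lets you pass from larger to smaller exponents. So possibly $q<\lambda p_*$, and \eqref{eq:Sx Lp} with exponents $(\lambda p_*,\lambda\tilde p_*)$ requires $\tilde p_*=\frac1\lambda<\frac{2pq}{\lambda p-q}$. This is not covered by your remark that ``$\lambda p_*$ can be taken equal to $1$''. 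It is true, though: since $q\ge 1$, we have $\lambda p-q\le\lambda pq-q<\lambda pq$, hence $\frac{2pq}{\lambda p-q}>\frac{2}{\lambda}>\frac1\lambda$. The paper records exactly this check as $\tilde p_*\le\tilde p\vee\frac2\lambda<\frac{2pq}{\lambda p-q}$. The constraint $\frac{2\lambda p_*}{p_*+2}<q$ is indeed harmless: it is the hypothesis when $p_*=p$, and it is $<\lambda p_*=1\le q$ when $p_*=1/\lambda$.

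\textbf{Your alternative route does not work.} You suggest estimating $V_x^{0,u}$ and $Y_x^{0,u}$ directly in $L^p(D)$ and then passing to $L^{\lambda p}$. By the linear growth of $b$, $\|V_x^{0,u}(t)\|_{L^p(D)}\lesssim\int_0^t\bigl(1+\|X_x^{0,u}(s)\|_{L^p(D)}\bigr)\dd s$. The quantity $\|X_x^{0,u}(s)\|_{L^p(D)}$ is not controlled by your Gr\"onwall functional, which only involves $\|X_x^{0,u}\|_{L^{\lambda p}(D)}$ with possibly $\lambda<1$. Moreover, changing the spatial norm does nothing about the time exponent. When $\lambda\tilde p<1$, the inequality $\bigl(\int_0^t f\bigr)^{\lambda\tilde p}\lesssim\int_0^t f^{\lambda\tilde p}$ fails, and Jensen goes the other way. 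So the exponent enlargement cannot be bypassed this way. It is genuinely needed, and with the admissibility check above completed, your proof is complete.
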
 
\begin{proof}
Let $N\in[0,\infty)$ and $x\in L^q(D)$. 
Since $X_x^{0,u}(\om)=X_x^{0,u(\om)}$ for a.e.\ $\om\in\Om$, and by the definition of $\A_N$, it suffices to prove the bound for $u\in L^2(0,T;L^2(D))$ with $\|u\|_{L^2(0,T;L^2(D))}\leq N$. 
We will apply Gr\"onwall's inequality to $T_1\mapsto \|X_x^{0,u}\|_{L^{\lambda\tilde p}(0,T_1;L^{\lambda p}(D))}^{\lambda\tilde p}$. Recall that $X_x^{0,u}=S(\cdot)x+V_x^{0,u}+Y_x^{0,u}$. We estimate these terms separately.  

Thanks to the parameter assumptions, we can apply \eqref{eq:Sx Lp}, giving for all $T_1\in(0,T]$:
\[
\int_0^{T_1}\|S(s)x\|_{L^{\lambda p}(D)}^{\lambda \tilde p}\dd s 
 \lesssim_{T,\lambda,p,\tilde p,q,|D|} \|x\|_{L^q(D)}^{\lambda\tilde p}.
\]  
Moreover, assuming for the moment that $\lambda p\geq 1$ and $\lambda\tilde p\geq 1$, we have by Minkowki's inequality,  H\"older's inequality and the linear growth of $b$:
\begin{align*}
\|V_x^{0,u}(t,\cdot)\|_{L^{\lambda p}(D)}^{\lambda\tilde p}
&\leq \Big(\int_0^t\|S(t-s)b(X_x^{0,u}(s,\cdot))\|_{L^{\lambda p}(D)}\dd s\Big)^{\lambda\tilde p}\\
&\lesssim_{T,\lambda,\tilde p} \int_0^t\|S(t-s)b(X_x^{0,u}(s,\cdot))\|_{L^{\lambda p}(D)}^{\lambda\tilde p}\dd s \\
&\leq \int_0^t\|b(X_x^{0,u}(s,\cdot))\|_{L^{\lambda p}(D)}^{\lambda\tilde p}\dd s\\
&\lesssim_{T,\lambda,p,\tilde p,|D|} 1+ \|X_x^{0,u}\|_{L^{\lambda\tilde p}(0,t;L^{\lambda p}(D))}^{\lambda\tilde p}.
\end{align*}
Integrating over $t$, we conclude that for all $T_1\in(0,T]$:
\begin{align*}
\|V_x^{0,u}\|_{L^{\lambda\tilde p}(0,T_1;L^{\lambda p}(D))}^{\lambda\tilde p}
&\lesssim_{T,\lambda,p,\tilde p,|D|} 1+
\int_0^{T_1}\|X_x^{0,u}\|_{L^{\lambda\tilde p}(0,t;L^{\lambda p}(D))}^{\lambda\tilde p} \dd t.
\end{align*} 
By the last part of Lemma \ref{lem:det conv} applied with $p_1=p_2= p\geq 2$, $\varphi\ceqq \sigma(X_x^{0,u})$, and $\tilde p>4$, the growth bound \eqref{eq:growth} for $\sigma$, and the fact that $\lambda \in (0,1]$, we have for all $t\in[0,T]$:
\begin{align*}
\|Y_x^{0,u}(t,\cdot)\|_{L^{\lambda p}(D)}^{\tilde p} \lesssim_{\lambda,p,\tilde p,|D|}
  \|Y_x^{0,u}(t,\cdot)\|_{L^{p}(D)}^{\tilde p} 
  &\lesssim_{N,T,\tilde p} \int_0^t \|\sigma(X_x^{0,u}(s,\cdot))\|_{L^{p}(D)}^{\tilde p}\dd s\\
  &\lesssim_{\tilde p,|D|} \int_0^t \big(1+\|X_x^{0,u}(s,\cdot)\|_{L^{\lambda p}(D)}^{\lambda \tilde p}\big)\dd s\\
  &\lesssim_{T } 1 +  \|X_x^{0,u}\|_{L^{\lambda\tilde p}(0,t; L^{\lambda p}(D))}^{  \tilde p}.
\end{align*}  
Raising the estimate to the power $\lambda$ and integrating over $t$, we conclude that for all $T_1\in(0,T]$:
\begin{align*}
\|Y_x^{0,u}\|_{L^{\lambda\tilde p}(0,T_1;L^{\lambda p}(D))}^{\lambda\tilde p}
=\int_0^{T_1}\|Y_x^{0,u}(t)\|_{L^{\lambda p}(D)}^{\lambda\tilde p}\dd t
&\lesssim_{\lambda,p,\tilde p,|D|,N,T} 1+
\int_0^{T_1}\|X_x^{0,u}\|_{L^{\lambda\tilde p}(0,t;L^{\lambda p}(D))}^{\lambda\tilde p} \dd t.
\end{align*} 
Combining the estimates above, if $\lambda p\geq 1$ and $\lambda\tilde p\geq 1$, we obtain for all $T_1\in(0,T]$:  
\begin{align*}
  \|&X_x^{0,u}\|_{L^{\lambda\tilde p}(0,{T_1};L^{\lambda p}(D))}^{\lambda\tilde p}\\
  &\lesssim_{\lambda,\tilde p}\|S(\cdot)x\|_{L^{\lambda\tilde p}(0,{T_1};L^{\lambda p}(D))}^{\lambda\tilde p}
  +\|V_x^{0,u}\|_{L^{\lambda\tilde p}(0,{T_1};L^{\lambda p}(D))}^{\lambda\tilde p}+\|Y_x^{0,u}\|_{L^{\lambda\tilde p}(0,{T_1};L^{\lambda p}(D))}^{\lambda\tilde p}\\
  &\lesssim_{T,\lambda,p,\tilde p,q,|D|,N}\|x\|_{L^q(D)}^{\lambda\tilde p}+ 1+
\int_0^{T_1}\|X_x^{0,u}\|_{L^{\lambda\tilde p}(0,t;L^{\lambda p}(D))}^{\lambda\tilde p} \dd t.
\end{align*}
Gr\"onwall's inequality thus gives  
\[
 \|X_x^{0,u}\|_{L^{\lambda\tilde p}(0,T;L^{\lambda p}(D))}^{\lambda\tilde p} 
  \lesssim_{T,\lambda,p,\tilde p,q,|D|,N}\|x\|_{L^q(D)}^{\lambda\tilde p}+ 1,
\]
concluding the proof if $\lambda p\geq 1$ and $\lambda\tilde p\geq 1$. 

If $\lambda p<1$ or $\lambda\tilde p<1$, then we define  $p_*\ceqq p\vee \frac{1}{\lambda}$ and $\tilde p_*\ceqq \tilde p\vee \frac{1}{\lambda}$. We verify that the parameter conditions also hold with $(p_*,\tilde p_*)$. Clearly, $p_*\geq p\geq 2$ and $\tilde p_*\geq \tilde p>4$. Moreover, if $q<\lambda p_*$, then $p_*\neq 1/\lambda$ (since $q\geq 1$), hence $p_*=p$ and $\tilde p_*=\tilde p \vee \frac{1}{\lambda}\leq \tilde p\vee \frac{2}{\lambda}<\frac{2pq}{\lambda p-q}=\frac{2p_*q}{\lambda p_*-q}$. Also, \eqref{eq:cond tilde p} is satisfied for $p_*$: if $p_*=p$, then by assumption, and if $p_*=1/\lambda$, then  $\frac{2\lambda p_*}{p_*+2}=\frac{2\lambda}{1+2\lambda}<1\leq q$. 
Consequently, our result can be applied with $p_*$ and $\tilde p_*$, giving a.s.
\[
\int_0^T \|X_x^{0,u}(s)\|_{L^{\lambda p_*}(D)}^{\lambda \tilde p_*}\dd s\leq M(1+\|x\|_{L^q(D)}^{\lambda\tilde p_*}).
\]
Applying Jensen's inequality twice (for $|\cdot|^{ p/ p_*}$ on $D$ and for $|\cdot|^{\tilde p/\tilde p_*}$ on $[0,T]$), we find 
\begin{align*}
\int_0^T \|X_x^{0,u}(s)\|_{L^{\lambda p}(D)}^{\lambda \tilde p}\dd s
    \lesssim_{T,p,p_*,\tilde p,\tilde p_*,|D|}\Big(\int_0^T \|X_x^{0,u}(s)\|_{L^{\lambda p_*}(D)}^{\lambda \tilde p_*}\dd s\Big)^{\tilde p/\tilde p_*}
    &\leq (M(1+\|x\|_{L^q(D)}^{\lambda\tilde p_*}))^{\tilde p/\tilde p_*}\\
    &\leq M^{\tilde p/\tilde p_*}(1+\|x\|_{L^q(D)}^{\lambda\tilde p}).
\end{align*}
Redefining $M$, and noting that $p_*,\tilde p_*$ depend only on $\lambda,p,$ and $\tilde p$, this proves the result for the general case. 
\end{proof}

\subsection{Conclusion of the ULDP}\label{sub:concl Lp}

By employing a Gr\"onwall scheme and applying Lemma \ref{lem:I2}, we now establish Theorem \ref{th:Lp}. 

\begin{proof}[Proof of Theorem \ref{th:Lp}] 
First, we observe that it suffices to prove the theorem for $q \in(\frac{2\lambda p}{p+2},\lambda p)$. 
Indeed, $\frac{2\lambda p}{p+2}<\lambda p$, so if $  q\geq\lambda p$, then we can pick $\tilde q\in(\frac{2\lambda p}{p+2},\lambda p)$  and note that  $L^{q}(D)$-bounded sets are also $L^{\tilde q}(D)$ bounded (since $D$ is bounded). Thus the  ULDP follows by applying the stated ULDP with $\tilde q$. 

From now on, let $q \in(\frac{2\lambda p}{p+2},\lambda p)$ and $x\in L^q(D)$.  
We will apply Gr\"onwall's inequality to the mapping $T_1\mapsto \E[\| X_x^{\eps,u} - X_x^{0,u} \|_{C([0,T_1];L^{p}(D))}^{{\tilde p}}]$. 
For all $t\in[0,T]$, we have 
\[
\|X_x^{\eps,u}(t)-X_x^{0,u}(t)\|_{L^p(D)}^{{\tilde p}}\lesssim_{{\tilde p}} \|V_x^{\eps,u}(t)-V_x^{0,u}(t)\|_{L^p(D)}^{{\tilde p}}+ \|Y_x^{\eps,u}(t)-Y_x^{0,u}(t)\|_{L^p(D)}^{{\tilde p}}+\eps^{p/2}\|Z_x^{\eps,u}(t)\|_{L^p(D)}^{{\tilde p}}.
\]  
Note that the difference $V_x^{\eps,u}-V_x^{0,u}$ equals $\Psi_x^b(X_x^{\eps,u}-S(\cdot)x)-\Psi_x^b(X_x^{0,u}-S(\cdot)x)$, thus the last part of Lemma \ref{lem:reg_ET} and H\"older's inequality yield for all $T_1\in(0,T]$:  
\begin{align*}
 \E[\|V_x^{\eps,u} - V_x^{0,u} \|_{C([0,T_1];L^{p}(D))}^{\tilde p}]
&\lesssim_{T,\tilde p} 
\Big(\int_0^{T_1} \| X_x^{\eps,u} - X_x^{0,u} \|_{L^{\tilde p}(\Om;C([0,s];L^{p}(D)))}\dd s\Big)^{\tilde p} \\
&\lesssim_{T,\tilde p} \int_0^{T_1} \E\big[ \|X_x^{\eps,u}-X_x^{0,u} \|_{C([0,s];L^{p}(D))}^{{\tilde p}}\big]\dd s.   
\end{align*}  
Similarly, $Y_x^{\eps,u}-Y_x^{0,u}=\Psi_x^{\sigma,u}(X_x^{\eps,u}-S(\cdot)x)-\Psi_x^{\sigma,u}(X_x^{0,u}-S(\cdot)x)$, so the last part of Lemma \ref{lem:reg_ET}   and H\"older's inequality yield 
\begin{align*}
 \E[\| Y_x^{\eps,u} - Y_x^{0,u} \|_{C([0,T_1];L^{p}(D))}^{\tilde p}]
&\lesssim_{N,T,p,\tilde p} 
\int_0^{T_1} \E\big[ \|X_x^{\eps,u}-X_x^{0,u} \|_{C([0,s];L^{p}(D))}^{{\tilde p}}\big]\dd s.   
\end{align*}  
For $Z_x^{\eps,u}$, we can apply Proposition \ref{prop:stoch conv}    with $\Phi\ceqq \sigma(X_x^{\eps,u})$ (so $Z=Z^{\eps,u}_x$). The latter, combined with the growth bound \eqref{eq:growth} for $\sigma$ and the fact that $\lambda\in(0,1]$, gives
\begin{align*}
\E[\|Z_x^{\eps,u}&\|_{C([0,T_1];L^{p}(D))}^{{\tilde p}}]
\lesssim_{T,p,\tilde p,|D|} \E\Big[\int_0^{T_1}(1+\|X_x^{\eps,u}(s)\|_{L^{\lambda p}(D)}^{\lambda\tilde p})\dd s\Big]\\
  &\lesssim_{T,\lambda,p,\tilde p,|D|} \int_0^{T_1}  \E\big[ \|X_x^{\eps,u}-X_x^{0,u} \|_{C([0,s];L^{p}(D))}^{{\tilde p}}\big]\dd s  + \Big(1+\E\Big[\int_0^{T_1}\|X_x^{0,u}(s)\|_{L^{\lambda p}(D)}^{{\lambda\tilde p}}\dd s\Big]\Big). 
  \end{align*} 
Combining the estimates above, we find that for any fixed $\eps_0>0$, for all $\eps\in(0,\eps_0]$, and for all $T_1\in(0,T]$, 
\begin{align*}
  \E[&\| X_x^{\eps,u} - X_x^{0,u} \|_{C([0,T_1];L^{p}(D))}^{{\tilde p}}] \\
  &\lesssim_{{\tilde p}} \E[\| V_x^{\eps,u} - V_x^{0,u} \|_{C([0,T_1];L^{p}(D))}^{{\tilde p}}]+\E[\| Y_x^{\eps,u} - Y_x^{0,u} \|_{C([0,T_1];L^{p}(D))}^{{\tilde p}}]+\eps^{\frac{{\tilde p}}{2}}\E[\|Z_x^{\eps,u}\|_{C([0,T_1];L^{p}(D))}^{{\tilde p}}]\\
  &\lesssim_{N,T,\lambda, p,\tilde p,|D|} \int_0^{T_1} (1+\eps_0^{\frac{\tilde p}{2}})\E\big[ \|X_x^{\eps,u}-X_x^{0,u} \|_{C([0,s];L^{p}(D))}^{{\tilde p}}\big]\dd s  +\eps^{\frac{{\tilde p}}{2}}\Big(1+\E\Big[\int_0^{T_1}\|X_x^{0,u}(s)\|_{L^{\lambda p}(D)}^{{\lambda\tilde p}}\dd s\Big]\Big). 
  \end{align*}
Now, Gr\"onwall's inequality gives 
\begin{equation}\label{eq:gronwallLp}
\E[\| X_x^{\eps,u} - X_x^{0,u} \|_{C([0,T];L^{p}(D))}^{{\tilde p}}]\leq C_1\eps^{\frac{{\tilde p}}{2}}\Big(1+\E\Big[\int_0^T\|X_x^{0,u}(s)\|_{L^{\lambda p}(D)}^{\lambda\tilde p}\dd s\Big]\Big),
\end{equation}
for a constant $C_1$ that depends on $N,T,\lambda, p,\tilde p,|D|,$ and $\eps_0$. 
Moreover, Lemma \ref{lem:I2} gives for all $\tilde{p}\in(4,\frac{2pq}{\lambda p-q})$ (see \eqref{eq:BqR}):  
\begin{equation}\label{eq:toshowYx0eps}
  \sup_{u\in\A_N}\sup_{x\in B_R^q} \E\Big[\int_0^T\|X_x^{0,u}(s)\|_{L^{\lambda p}(D)}^{\lambda\tilde p}\dd s\Big]<\infty.
\end{equation}  
Fixing any $\tilde p$ in the (non-empty) range above, and combining \eqref{eq:gronwallLp} and \eqref{eq:toshowYx0eps} with  Chebychev's inequality proves that the condition of Theorem \ref{th:uldp suff}(2) is satisfied, thus the latter yields the ULDP.  
\end{proof}

\begin{remark}\label{rem:Lq initial data}
In fact, the proofs of Theorem \ref{th:C} and \ref{th:Lp} establish  that the corresponding  condition in Theorem \ref{th:uldp suff} holds uniformly over all $x\in B_R^q$. Nevertheless, the ULDP is stated only for $x\in B_R^q\cap C(D)$ and  $x\in B_R^q\cap L^p(D)$ respectively, because for $x\in L^q(D)\setminus C(D)$ or $x\in L^q(D)\setminus L^p(D)$, the solution $X_x^\eps$ to the SHE may fail to belong to $C([0,T];C(D))$ or $C([0,T];L^p(D))$, respectively. 
\end{remark}

\section{Further ULDPs}\label{sec:additionalULDPs}

In this section, we derive further ULDPs using our previous results and the arguments developed in their proofs. 
Our next  corollary establishes ULDPs for initial data that need only belong to $L^q(D)$, without the additional requirement of belonging to $L^p(D)$, as in Theorem \ref{th:Lp}.  

\begin{corollary}[ULDPs with initial data in $L^q(D)$]\label{cor:uldp pure Lq}  
Let the conditions of Theorem \ref{th:Lp} hold. For $\eps>0$ and $x\in L^q(D)$, let $X_x^{\eps}=X_x^{\eps,0}$ be the mild solution to \eqref{eq:SHE} provided by Theorem \ref{th:wpnLp}. Then:  
\begin{enumerate}
    \item 
    The family $\{X_x^{\eps}-S(\cdot)x:\eps>0,x\in L^q(D)\}$  satisfies the ULDP on $C([0,T];L^{p}(D))$  uniformly over bounded subsets of $L^q(D)$, with rate functions $J_x\col C([0,T];L^{p}(D))\to [0,+\infty]$ given by 
\begin{align*}
J_x(\varphi)\ceqq \frac12\inf\{\|u&\|_{L^2(0,T;L^2(D))}^2 : u\in L^2(0,T;L^2(D)),\, \varphi=\tilde X_x^{0,u} \}, 
\end{align*}
where $\inf\varnothing\ceqq +\infty$ and $\tilde X_x^{0,u}$ is the mild solution to 
\begin{equation*} 
\begin{cases}
\frac{\partial \tilde X}{\partial t}(t, \xi) = \Delta \tilde X(t, \xi) +b\big(\tilde X(t,\xi)+S(t)x(\xi)\big)\\
\qquad\qquad\quad+ \sigma\big(\tilde X(t, \xi)+S(t)x(\xi)\big)u(t,\xi), \quad  \xi \in D, t \in(0,T], \\ 
\tilde X(0, \xi) = 0, \quad\xi\in D.
\end{cases} 
\end{equation*}  
\item If $\mathcal{J}$ is any subset of $L^q(D)$, and $(\mathcal{E},d)$ is any Polish topological vector space such that 
\begin{align}\label{eq:E inclusionJ}
\{S(\cdot)x:x\in \mathcal{J}\}\subset \mathcal{E}, \quad
C([0,T];L^p(D))\subset \mathcal{E},    
\end{align}
and there exists a constant $K$ such that  for all $f,g\in \mathcal{E}$ with $f-g\in C([0,T];L^p(D))$,   
\begin{align}\label{eq:Lip metricJ}
d(f,g)\leq K\|f-g\|_{C([0,T];L^p(D))},  
\end{align} 
then the family  $\{X_x^{\eps} :\eps>0,x\in \mathcal{J}\}$  satisfies the ULDP on $\mathcal{E}$  uniformly over $L^q(D)$-bounded subsets of $\mathcal{J}$, with rate functions $I_x\col \mathcal{E}\to [0,+\infty]$ defined by the formula \eqref{eq:rate}. 
\end{enumerate}
\end{corollary}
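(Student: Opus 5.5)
For part (1), the plan is to apply the abstract criterion of \cite[Th.\ 2.13]{Salins19equivalences} directly to the shifted family $\tilde X_x^{\eps,u}\ceqq X_x^{\eps,u}-S(\cdot)x$, with index set $\mathcal{E}_0=L^q(D)$ and state space $\mathcal{E}=C([0,T];L^p(D))$. By Theorem \ref{th:wpnLp}, $\tilde X_x^{\eps,u}$ is the unique element of $L^{\tilde p}(\Om;C([0,T];L^p(D)))$ solving
\[
\tilde X=\Psi_x^b(\tilde X)+\Psi_x^{\sigma,u}(\tilde X)+\sqrt{\eps}\,\Psi_x^{\sigma}(\tilde X).
\]
This is a mild equation of the same structure as \eqref{eq:split X}: zero initial datum, and coefficients $b(\cdot+S(t)x)$, $\sigma(\cdot+S(t)x)$ that are time-dependent but still Lipschitz uniformly in $t$ and $x$. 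So $\tilde X_x^{\eps,u}$ is a measurable functional of the noise shifted by the control, and the variational (Budhiraja--Dupuis) representation underlying \cite{Salins19equivalences} applies verbatim. The skeleton equation is the one defining $\tilde X_x^{0,u}$, which gives exactly the rate functions $J_x$. The sufficient condition to verify is
\[
\lim_{\eps\downarrow0}\sup_{x\in B_R^q}\sup_{u\in\A_N}\P\big(\|\tilde X_x^{\eps,u}-\tilde X_x^{0,u}\|_{C([0,T];L^p(D))}>\delta\big)=0.
\]
Since $\tilde X_x^{\eps,u}-\tilde X_x^{0,u}=X_x^{\eps,u}-X_x^{0,u}$, this is exactly what the proof of Theorem \ref{th:Lp} delivers. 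Indeed, \eqref{eq:gronwallLp} combined with Lemma \ref{lem:I2} holds for all $x\in B_R^q$, not only $x\in B_R^q\cap L^p(D)$, as noted in Remark \ref{rem:Lq initial data}. Chebyshev's inequality then gives the limit. The reduction to $q<\lambda p$ carries over unchanged, since $B_R^q\subset B_{R'}^{\tilde q}$.

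The one point needing care is that the criterion in \cite{Salins19equivalences} is formulated for a fixed equation rather than for $x$-dependent coefficients. I would check that its proof only uses three ingredients: (a) well-posedness of the controlled equations for each $x$, supplied by Theorem \ref{th:wpnLp}; (b) that $\tilde X_x^{\eps,u}=\mathcal{G}_x^\eps(\sqrt\eps W+\int_0^\cdot u)$ for measurable maps $\mathcal{G}_x^\eps$, by Girsanov and uniqueness; and (c) compactness of the level sets of $J_x$, or at least the equivalence used there. Since \cite{Salins19equivalences} is abstract in $x$, I expect (a)--(c) to be the only requirements. Lower semicontinuity of $J_x$ should follow from continuity of $u\mapsto\tilde X_x^{0,u}$ from the weak topology on $\{\|u\|\le N\}$ to $C([0,T];L^p(D))$, obtained via Lemma \ref{lem:det conv} by the usual compactness argument. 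I regard this step as the main obstacle, but a routine one.

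For part (2), I will transfer the ULDP from part (1) through the map $F_x\col C([0,T];L^p(D))\to\mathcal{E}$, $F_x(\psi)=\psi+S(\cdot)x$. This map is well-defined by \eqref{eq:E inclusionJ} together with the vector space structure of $\mathcal{E}$. By \eqref{eq:Lip metricJ} it is $K$-Lipschitz, uniformly in $x$: $d(F_x\psi_1,F_x\psi_2)\le K\|\psi_1-\psi_2\|$. Moreover $X_x^\eps=F_x(\tilde X_x^\eps)$, and $I_x\circ F_x=J_x$ with $I_x=+\infty$ off the range of $F_x$.

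Checking the two bounds in Definition \ref{def:uldp} is then direct. For the lower bound, take $\varphi=F_x(\psi)$. Then $\{\|\tilde X_x^\eps-\psi\|<\delta/K\}\subset\{d(X_x^\eps,\varphi)<\delta\}$, and the right-hand side is what the lower bound requires. For the upper bound, $\Phi^I_x(s)=F_x(\Phi^J_x(s))$, so $\mathrm{dist}_d(X_x^\eps,\Phi^I_x(s))\le K\,\mathrm{dist}(\tilde X_x^\eps,\Phi^J_x(s))$. Both estimates are uniform in $x\in\mathcal{J}\cap B_R^q$, so uniformity passes from part (1), with $\delta$ replaced by $\delta/K$. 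This is the standard uniform contraction principle for uniformly Lipschitz maps.
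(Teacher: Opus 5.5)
Your part (1) follows the paper's argument. You apply \cite[Th.\ 2.13]{Salins19equivalences} on $C([0,T];L^p(D))$ to the shifted family and note that $\tilde X_x^{\eps,u}-\tilde X_x^{0,u}=X_x^{\eps,u}-X_x^{0,u}$. Then \eqref{eq:gronwallLp} and Lemma \ref{lem:I2}, which hold for all $x\in B_R^q$ by Remark \ref{rem:Lq initial data}, verify the criterion. One precision point: the paper gets the measurable solution maps $\tilde{\G}_x^\eps$ from pathwise uniqueness in Theorem \ref{th:wpnLp} via the Yamada--Watanabe theorem. Girsanov and uniqueness are what then identify the controlled solutions with $\tilde{\G}_x^\eps$ evaluated at the shifted noise; they do not produce the maps themselves.

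For part (2) you take a genuinely different route. You transfer the ULDP of part (1) directly through Definition \ref{def:uldp}, using that the translations $F_x$ are $K$-Lipschitz uniformly in $x$. The paper mentions this contraction argument as an option but does not carry it out. Instead it re-applies the abstract criterion of \cite{Salins19equivalences} to the composed maps $\G_x^\eps=\Phi_x\circ\tilde{\G}_x^\eps$ on $\mathcal{E}$. It checks Borel measurability through continuity of translation, and the convergence condition through \eqref{eq:lim2} and \eqref{eq:Lip metricJ}.

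Your route is more elementary and more general. It shows that any ULDP is stable under uniformly Lipschitz, $x$-dependent translations, and it needs no second pass through the variational machinery. Your two event inclusions, for the lower and upper bounds with $\delta$ replaced by $\delta/K$ (taking $K>0$ without loss of generality), are correct.

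What your route leaves you to check, and the paper's route delegates to the abstract theorem, is that $I_x$ is lower semicontinuous on $\mathcal{E}$, as Definition \ref{def:uldp} requires. This does follow. The level sets of $J_x$ are compact in $C([0,T];L^p(D))$ (your ingredient (c)), and $F_x$ is continuous, so $\Phi_x^I(s)=F_x(\Phi_x^J(s))$ is compact in $\mathcal{E}$. Closedness of the level sets of $J_x$ alone would not be enough. The topology that $d$ induces on $C([0,T];L^p(D))$ can be strictly coarser than the norm topology, for example when $\mathcal{E}=L^r(0,T;L^p(D))$, so a norm-closed set need not be closed in $\mathcal{E}$.
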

\begin{proof}
\textbf{Proof of \textit{(1)}:} 
We apply \cite[Th.\ 2.13]{Salins19equivalences} with the  Polish space $C([0,T];L^p(D))$. 
We have existence of measurable solution maps $\tilde{\G}_x^\eps\col C([0,T];\R^\infty)\to C([0,T];L^p(D))$ such that $\tilde{\G}_x^\eps(W)=X_x^{\eps}-S(\cdot)x$ a.s.,  thanks to Theorem \ref{th:wpnLp} and the Yamada--Watanabe theorem. Indeed, for each fixed $x\in L^q(D)$,  $  X_x^\eps-S(\cdot)x$  is a mild solution to 
\begin{equation}\label{eq:SHE2}
\begin{cases}
\frac{\partial \tilde X}{\partial t}(t, \xi) = \Delta \tilde X(t,\xi) + b\big(S(t)x(\xi)+\tilde X(t,\xi)\big)\\
\qquad\qquad\quad+ \sqrt{\eps}  \sigma\big(S(t)x(\xi)+\tilde X(t, \xi)\big)\dot{W}(t,\xi), \quad  \xi \in D, t \in(0,T], \\ 
\tilde X_x^{\eps}(0, \xi) = 0, \quad\xi\in D.
\end{cases}  
\end{equation} 
Because $\tilde X$ is a mild solution to \eqref{eq:SHE2} if and only if $\tilde X+S(\cdot)x$ is a mild solution to \eqref{eq:SHE}, the pathwise uniqueness stated in Theorem \ref{th:wpnLp} implies that  $X_x^\eps-S(\cdot)x$ is the unique mild solution to \eqref{eq:SHE2}  within $L^{\tilde p}(\Om;C([0,T];L^p(D)))$. The Yamada--Watanabe theorem (see \cite{ondrejat}) yields measurable mappings $\tilde{\G}_x^{\eps}\col C([0,T];\R^\infty)\to C([0,T];L^p(D))$ such that $\tilde{\G}_x^{\eps}(W)=\tilde X_x^{\eps}-S(\cdot)x$ a.s., where we identify $W=(B_k)_{k\in\N}$ as in  \eqref{eq:Walsh int}.  

Taking into account Remark \ref{rem:Lq initial data}, the proof of Theorem \ref{th:Lp}    gives immediately 
\begin{equation}\label{eq:lim2}
\lim_{\eps\downarrow 0}\sup_{x\in B_R^q}\sup_{u\in\A_N}\P(\|X_x^{\eps,u}-X_x^{0,u}\|_{C([0,T];L^p(D))}>\delta)=0, 
\end{equation} Since $X_x^{\eps,u}-S(\cdot)x-(X_x^{0,u}-S(\cdot)x)=X_x^{\eps,u}-X_x^{0,u}$,  
the ULDP now follows by \cite[Th.\ 2.13]{Salins19equivalences}.

\textbf{Proof of \textit{(2)}:} One can combine part \textit{(1)} either with a contraction argument and the definition of the ULDP, or with another application of \cite[Th.\ 3.12]{Salins19equivalences}. Here, we use the latter approach. 

Let $F\ceqq C([0,T];L^p(D))$. The proof of \textit{(1)} yielded    measurable maps 
$\tilde{\G}_x^{\eps}\col C([0,T];\R^\infty)\to F$
such that $\tilde{\G}_x^{\eps}(W)=X_x^{\eps}-S(\cdot)x$ a.s. Now, for any $x\in\mathcal{J}$, we have $S(\cdot)x\in\mathcal{E}$ by assumption, and $\Phi_x\col F\to \mathcal{E}\col \phi\mapsto S(\cdot)x+\phi$ is continuous since $F\into \mathcal{E}$ and since $\mathcal{E}$ is a topological vector space (so translations are continuous). Thus, the mappings ${\G}_x^\eps\ceqq \Phi_x\circ \tilde{\G}_x^\eps\col C([0,T];\R^\infty)\to \mathcal{E}$ are Borel measurable and satisfy $\G_x^{\eps}(W)= X_x^{\eps}$ a.s. 

Furthermore, for $\eps,N\in[0,\infty)$, and $u\in \A_N$, we have  $X_x^{\eps,u}=S(\cdot)x+(X_x^{\eps,u}-S(\cdot)x)\in \mathcal{E}$, using \eqref{eq:E inclusionJ} and the $C([0,T];L^p(D))$-regularity of $X_x^{\eps,u}-S(\cdot)x$ from  Theorem \ref{th:wpnLp}. Moreover, by combining \eqref{eq:lim2} with \eqref{eq:Lip metricJ}, we obtain
\[
\lim_{\eps\downarrow 0}\sup_{x\in B_R^q\cap\mathcal{J}}\sup_{u\in\A_N}\P(d(X_x^{\eps,u},X_x^{0,u})>\delta)=0, \]
thus the stated ULDP thus follows from \cite[Th.\ 3.12]{Salins19equivalences}. 
\end{proof}

Analogous ULDP results can also be established under the conditions of Theorem \ref{th:C}.  

\begin{remark}\label{rem:more uldps}
Corollary \ref{cor:uldp pure Lq} holds analogously   under the conditions of Theorem \ref{th:C}, with every instance of $L^p(D)$ replaced by $C(D)$. The proof is completely analogous, noting that well-posedness was provided by Theorem \ref{th:wpnC} and  taking again note of Remark \ref{rem:Lq initial data}. 
\end{remark}

The following can be derived as a special case of Corollary \ref{cor:uldp pure Lq}\textit{(2)}.

\begin{corollary}[ULDP  in $L^r(0,T;L^p(D))$]\label{cor:uldp LrLp}  
Let the conditions of Theorem \ref{th:Lp} hold. For $\eps>0$ and $x\in L^q(D)$, let $X_x^{\eps}=X_x^{\eps,0}$ be the mild solution to \eqref{eq:SHE} provided by Theorem \ref{th:wpnLp}.  Let $r \in [1,\infty)$ be such that  $\frac{r}{2}(\frac1q-\frac{1}{p})<1$. 

Then, the family  $\{X_x^{\eps} :\eps>0,x\in L^q(D)\}$  satisfies the ULDP on $L^r(0,T;L^{p}(D))$  uniformly over bounded subsets of $L^q(D)$, with rate functions $I_x\col L^r(0,T;L^{p}(D))\to [0,+\infty]$ defined by the formula \eqref{eq:rate}. 
\end{corollary}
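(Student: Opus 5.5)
The plan is to obtain the statement as a special case of Corollary \ref{cor:uldp pure Lq}\textit{(2)}, with $\mathcal{J}\ceqq L^q(D)$ and $\mathcal{E}\ceqq L^r(0,T;L^p(D))$, metrized by its norm $d(f,g)\ceqq\|f-g\|_{L^r(0,T;L^p(D))}$. This space is a separable Banach space because $r<\infty$ and $p<\infty$. Hence it is a Polish topological vector space, and translations are continuous. It therefore remains to verify the two inclusions in \eqref{eq:E inclusionJ} and the Lipschitz bound \eqref{eq:Lip metricJ}.

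\textbf{Step 1: $S(\cdot)x\in\mathcal{E}$ for every $x\in L^q(D)$.} There are two cases.
\begin{itemize}
\item If $q<p$, the smoothing estimate \eqref{eq:est} with $p_1=p$ gives $\|S(t)x\|_{L^p(D)}\lesssim_{T,q,p} t^{-\frac12(\frac1q-\frac1p)}\|x\|_{L^q(D)}$ for $t\in(0,T]$. Consequently
\[
\int_0^T\|S(t)x\|_{L^p(D)}^r\dd t\lesssim \|x\|_{L^q(D)}^r\int_0^T t^{-\frac r2(\frac1q-\frac1p)}\dd t<\infty,
\]
and the time integral is finite exactly because of the hypothesis $\frac r2(\frac1q-\frac1p)<1$.
\item If $q\geq p$, then $x\in L^p(D)$ because $D$ is bounded. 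Since $S$ is contractive on $L^p(D)$, $S(\cdot)x$ is bounded in $L^p(D)$ on $[0,T]$.
\end{itemize}
In both cases we also need measurability of $t\mapsto S(t)x$ into $L^p(D)$. This holds because the map is continuous on $(0,T]$: write $S(t)x=S(t-t_0)S(t_0)x$ for $t>t_0>0$, note that $S(t_0)x\in L^p(D)$ by \eqref{eq:est}, and use strong continuity of $S$ on $L^p(D)$. Strong measurability then holds, so $S(\cdot)x\in L^r(0,T;L^p(D))$.

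\textbf{Step 2: $C([0,T];L^p(D))\subset\mathcal{E}$ and the Lipschitz bound.} The inclusion is clear. For $f,g\in\mathcal{E}$ with $f-g\in C([0,T];L^p(D))$ we have
\[
d(f,g)=\Big(\int_0^T\|f(t)-g(t)\|_{L^p(D)}^r\dd t\Big)^{1/r}\leq T^{1/r}\|f-g\|_{C([0,T];L^p(D))},
\]
so \eqref{eq:Lip metricJ} holds with $K=T^{1/r}$.

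\textbf{Step 3: conclusion.} Corollary \ref{cor:uldp pure Lq}\textit{(2)} now yields the ULDP on $L^r(0,T;L^p(D))$, uniformly over $L^q(D)$-bounded subsets of $\mathcal{J}=L^q(D)$, that is, over bounded subsets of $L^q(D)$, with rate functions given by \eqref{eq:rate}. In \eqref{eq:rate} the identity $\varphi=X_x^{0,u}$ is read in $\mathcal{E}$; this makes sense because $X_x^{0,u}=S(\cdot)x+(X_x^{0,u}-S(\cdot)x)\in\mathcal{E}$ by Theorem \ref{th:wpnLp} and Step 1.

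The only genuinely non-routine point is Step 1, namely the integrability of the semigroup singularity at $t=0$. This is precisely where the hypothesis on $r$ enters, together with the care needed about strong measurability of $S(\cdot)x$ in $L^p(D)$ when $x\notin L^p(D)$. Everything else is a direct check of the hypotheses of Corollary \ref{cor:uldp pure Lq}\textit{(2)}.
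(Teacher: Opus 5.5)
Your proposal is correct and is the paper's proof: apply Corollary~\ref{cor:uldp pure Lq}\textit{(2)} with $\mathcal{J}=L^q(D)$ and $\mathcal{E}=L^r(0,T;L^p(D))$. Then check $S(\cdot)x\in\mathcal{E}$ using \eqref{eq:est} together with the hypothesis $\frac r2(\frac1q-\frac1p)<1$, and use the continuous embedding $C([0,T];L^p(D))\hookrightarrow L^r(0,T;L^p(D))$ for \eqref{eq:E inclusionJ} and \eqref{eq:Lip metricJ}. Your separate treatment of the case $q\geq p$ and of the strong measurability of $S(\cdot)x$ only fills in details the paper leaves implicit.
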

\begin{proof}
We apply Corollary \ref{cor:uldp pure Lq}\textit{(2)} with $\mathcal{J}=L^q(D)$. 
    For any $x\in L^q(D)$, we have  $S(\cdot)x\in L^r(0,T;L^{p}(D))$, due to the assumptions on $r$ and the bound \eqref{eq:est}. Furthermore, we have a continuous embedding $C([0,T];L^p(D))\into L^r(0,T;L^{p}(D))$, so both \eqref{eq:E inclusionJ} and \eqref{eq:Lip metricJ} are satisfied. 
\end{proof}

Our final result shows that a ULDP over $L^1(D)$-bounded sets of initial data always holds  in the  $C([0,T];L^r(D))$ topology for $r\leq 2$, whenever the noise has sublinear growth ($\lambda\in(0,1)$). If the noise has linear growth ($\lambda=1$), we obtain the ULDP over $L^q(D)$-bounded sets of initial data for any $q>1$. 

\begin{corollary}[{ULDP on $C([0,T];L^r(D))$ for $r\in[1,2]$}]\label{cor:Lp p<2}  
Let Assumption \ref{asm} hold. Let $r\in[1,2]$ and let $q=1$ if $\lambda<1$, and $q\in(1,r]$ if $\lambda=1$.   

Then, the family $\{X_x^{\eps}:\eps>0,x\in L^r(D)\}$ of mild solutions to \eqref{eq:SHE}  satisfies the ULDP on $C([0,T];L^r(D))$, uniformly over initial data in $L^q(D)$-bounded subsets of $L^r(D)$,  
with the rate functions $I_x$ from Theorem \ref{th:Lp}. 
\end{corollary}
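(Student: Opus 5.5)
The plan is to deduce the statement from Corollary \ref{cor:uldp pure Lq}\textit{(2)}, applied with the exponent $p=2$ and the Polish space $\mathcal{E}\ceqq C([0,T];L^r(D))$. On $\mathcal{E}$ we take the norm metric $d(f,g)=\|f-g\|_{C([0,T];L^r(D))}$, and we take $\mathcal{J}\ceqq L^r(D)$. The whole argument consists of checking the hypotheses of that corollary, so no new estimates should be needed.

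\textbf{Step 1: parameter check.} For $p=2$ the constraint of Theorem \ref{th:Lp} reads $\frac{2\lambda p}{p+2}=\lambda<q$.
\begin{itemize}
\item If $\lambda<1$ and $q=1$, this holds.
\item If $\lambda=1$ and $q\in(1,r]$, it also holds.
\end{itemize}
Hence the conditions of Theorem \ref{th:Lp} are satisfied with $p=2$ and the given $q$. This is exactly why $\lambda=1$ forces $q>1$, and it also forces $r>1$ in that case, consistent with the statement. Since $q\le r$ and $|D|<\infty$, we have $\mathcal{J}=L^r(D)\subset L^q(D)$. Thus Theorem \ref{th:wpnLp} provides, for every $x\in L^r(D)$, the mild solution $X_x^{\eps,u}$ with $X_x^{\eps,u}-S(\cdot)x\in L^{\tilde p}(\Om;C([0,T];L^2(D)))$.

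\textbf{Step 2: the inclusions \eqref{eq:E inclusionJ}.}
\begin{itemize}
\item For $x\in L^r(D)$ we have $S(\cdot)x\in C([0,T];L^r(D))=\mathcal{E}$, because $S$ is a strongly continuous semigroup on $L^r(D)$ for $r\in[1,\infty)$.
\item By H\"older's inequality on the finite-measure torus, $L^2(D)\into L^r(D)$ continuously for $r\le 2$. Hence $C([0,T];L^2(D))\into\mathcal{E}$, with $\|f\|_{C([0,T];L^r(D))}\le |D|^{\frac1r-\frac12}\|f\|_{C([0,T];L^2(D))}$.
\end{itemize}

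\textbf{Step 3: the Lipschitz condition \eqref{eq:Lip metricJ}.} The bound in Step 2 gives \eqref{eq:Lip metricJ} with $K=|D|^{\frac1r-\frac12}$. Moreover, $\mathcal{E}$ is a Banach space, hence a Polish topological vector space.

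\textbf{Step 4: conclusion.} Corollary \ref{cor:uldp pure Lq}\textit{(2)} now yields the ULDP for $\{X_x^\eps:\eps>0,\,x\in L^r(D)\}$ on $C([0,T];L^r(D))$. The uniformity holds over $L^q(D)$-bounded subsets of $L^r(D)$, and the rate functions are given by \eqref{eq:rate}. By Step 2 the solutions indeed take values in $C([0,T];L^r(D))$, so this is the ULDP claimed. Note that the rate functions are computed in $\mathcal{E}$, via the skeleton solutions $X_x^{0,u}=S(\cdot)x+(X_x^{0,u}-S(\cdot)x)\in\mathcal{E}$.

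The only real content is the uniform convergence \eqref{eq:lim2} for $p=2$, which is inherited from the proof of Theorem \ref{th:Lp}. The main point to double-check is that Remark \ref{rem:Lq initial data} indeed covers initial data $x\in L^r(D)$ that need not lie in $L^2(D)$. This holds because the Gr\"onwall estimate \eqref{eq:gronwallLp} and Lemma \ref{lem:I2} only use $\|x\|_{L^q(D)}$, and never use $x\in L^p(D)$.
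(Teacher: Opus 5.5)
Your proposal is correct and follows the paper's proof: the paper also applies Corollary~\ref{cor:uldp pure Lq}\textit{(2)} with $p=2$, $\mathcal{J}=L^r(D)$ and $\mathcal{E}=C([0,T];L^r(D))$. It checks the same three things: $\frac{2\lambda p}{p+2}=\lambda<q$, the embedding $C([0,T];L^2(D))\into C([0,T];L^r(D))$, and $S(\cdot)x\in C([0,T];L^r(D))$ by strong continuity of $S$ on $L^r(D)$. Your extra checks are details the paper leaves implicit: the inclusion $L^r(D)\subset L^q(D)$ from $q\le r$, the explicit constant $K=|D|^{\frac1r-\frac12}$, and the fact that \eqref{eq:lim2} holds over all of $B_R^q$ by Remark~\ref{rem:Lq initial data}.
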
 
\begin{proof}[Proof of Corollary \ref{cor:Lp p<2}]
We apply Corollary \ref{cor:uldp pure Lq}\textit{(2)} with $p=2$, $\mathcal{J}\ceqq L^r(D)\subset L^q(D)$  and $\mathcal{E}=C([0,T];L^r(D))$. 
    Because $\lambda<q$, $p=2$ satisfies the parameter condition $\frac{2\lambda  p}{ p+2}=\lambda< q$  of Theorem \ref{th:Lp}. Furthermore, $C([0,T];L^p(D))\into C([0,T];L^r(D))$ since $p=2\geq r$, and it holds that $S(\cdot)x\in L^r(D)$ for all $x\in \mathcal{J}$ since $S$ is a strongly continuous semigroup on $L^r(D)$.  
\end{proof}

\bibliographystyle{plain}
\bibliography{literature}

\begin{thebibliography}{10}

\bibitem{LDP6}
H.~Bessaih and A.~Millet.
\newblock Large deviation principle and inviscid shell models.
\newblock {\em Electron. J. Probab.}, 14, 2009.

\bibitem{LDP7}
H.~Bessaih and A.~Millet.
\newblock Large deviations and the zero viscosity limit for {2D} stochastic {Navier}–{Stokes} equations with free boundary.
\newblock {\em SIAM J. Math. Anal.}, 44(3):1861--1893, 2012.

\bibitem{ULDP1}
A.~Biswas and A.~Budhiraja.
\newblock Exit time and invariant measure asymptotics for small noise constrained diffusions.
\newblock {\em Stoch. Process. Appl.}, 121(5):899--924, 2011.

\bibitem{LDP1}
Z.~Brzezniak, Q.~Li, and T.~Zhang.
\newblock Large deviation principle of stochastic evolution equations with reflection.
\newblock {\em J. Evol. Equ.}, 24(4):91, 2024.

\bibitem{BDM08}
A.~Budhiraja, P.~Dupuis, and V.~Maroulas.
\newblock Large deviations for infinite dimensional stochastic dynamical systems.
\newblock {\em Ann. Probab.}, 36(4), 2008.

\bibitem{ULDP2}
A.~Budhiraja, P.~Dupuis, and V.~Maroulas.
\newblock Large deviations for stochastic flows of diffeomorphisms.
\newblock {\em Bernoulli}, 16(1), 2010.

\bibitem{CCY2024}
D.~Candil, L.~Chen, and C.Y. Lee.
\newblock Parabolic stochastic {PDE}s on bounded domains with rough initial conditions: moment and correlation bounds.
\newblock {\em Stoch. Partial Differ. Equ.: Anal. Comput.}, 12(3):1507--1573, 2024.

\bibitem{LDP8}
C.~Cardon-Weber.
\newblock Large deviations for a {Burgers}’-type {SPDE}.
\newblock {\em Stoch. Process. Appl.}, 84(1):53--70, 1999.

\bibitem{CR04}
S.~Cerrai and M.~Röckner.
\newblock Large deviations for stochastic reaction-diffusion systems with multiplicative noise and non-{Lipshitz} reaction term.
\newblock {\em Ann. Probab.}, 32(1B), 2004.

\bibitem{CD2014}
L.~Chen and R.C. Dalang.
\newblock H{\"o}lder-continuity for the nonlinear stochastic heat equation with rough initial conditions.
\newblock {\em Stoch. Partial Differ. Equ.: Anal. Comput.}, 2(3):316--352, 2014.

\bibitem{CD2015}
L.~Chen and R.C. Dalang.
\newblock Moments and growth indices for the nonlinear stochastic heat equation with rough initial conditions.
\newblock {\em Ann. Probab.}, 43(6):3006 -- 3051, 2015.

\bibitem{CH2019}
L.~Chen and J.~Huang.
\newblock Comparison principle for stochastic heat equation on {$\R^d$}.
\newblock {\em Ann. Probab.}, 47(2):989--1035, 2019.

\bibitem{CH2023}
L.~Chen and J.~Huang.
\newblock Superlinear stochastic heat equation on {$\R^d$}.
\newblock {\em Proc. Am. Math. Soc.}, 151(09):4063--4078, 2023.

\bibitem{CK2019}
L.~Chen and K.~Kim.
\newblock Nonlinear stochastic heat equation driven by spatially colored noise: moments and intermittency.
\newblock {\em Acta Math. Sci.}, 39(3):645--668, 2019.

\bibitem{CX2026}
L.~Chen and P.~Xia.
\newblock Asymptotic properties of stochastic partial differential equations in the sublinear regime.
\newblock {\em Ann. Probab.}, 54(4):1686--1714, 2026.

\bibitem{LDP9}
I.~Chueshov and A.~Millet.
\newblock Stochastic {2D} hydrodynamical type systems: well posedness and large deviations.
\newblock {\em Appl. Math. Optim.}, 61(3):379--420, 2010.

\bibitem{daprato}
G.~Da~Prato and J.~Zabczyk.
\newblock {\em Stochastic equations in infinite dimensions}.
\newblock Number 152 in Encyclopedia of mathematics and its applications. Cambridge university press, Cambridge, 2nd edition, 2014.

\bibitem{DZ10}
A.~Dembo and O.~Zeitouni.
\newblock {\em Large {Deviations} {Techniques} and {Applications}}, volume~38 of {\em Stochastic {Modelling} and {Applied} {Probability}}.
\newblock Springer, Berlin, Heidelberg, 2010.

\bibitem{LDP10}
J.~Duan and A.~Millet.
\newblock Large deviations for the {Boussinesq} equations under random influences.
\newblock {\em Stoch. Process. Appl.}, 119(6):2052--2081, 2009.

\bibitem{FKN2025}
M.~Foondun, D.~Khoshnevisan, and E.~Nualart.
\newblock On the local well-posedness of randomly forced reaction-diffusion equations with {$L^2$} initial data and a superlinear reaction term.
\newblock {\em Probability Theory and Related Fields}, pages 1--35, 2026.

\bibitem{ULDP4}
M.~Foondun and L.~Setayeshgar.
\newblock Large deviations for a class of semilinear stochastic partial differential equations.
\newblock {\em Stat. Probab. Lett.}, 121:143--151, 2017.

\bibitem{freidlin88}
M.I. Freidlin.
\newblock Random perturbations of reaction-diffusion equations: {T}he quasi-deterministic approximation.
\newblock {\em Trans. Am. Math. Soc.}, 305(2):665--697, 1988.

\bibitem{FW3rdedition}
M.I. Freidlin and A.D. Wentzell.
\newblock {\em Random {Perturbations} of {Dynamical} {Systems}}, volume 260 of {\em Grundlehren der mathematischen {Wissenschaften}}.
\newblock Springer Berlin Heidelberg, Berlin, Heidelberg, 2012.

\bibitem{ULDP3}
E.~Gautier.
\newblock Uniform large deviations for the nonlinear {Schrödinger} equation with multiplicative noise.
\newblock {\em Stoch. Process. Appl.}, 115(12):1904--1927, 2005.

\bibitem{LDP4}
W.~Hu, M.~Salins, and K.~Spiliopoulos.
\newblock Large deviations and averaging for systems of slow-fast stochastic reaction–diffusion equations.
\newblock {\em Stoch. PDE: Anal. Comp.}, 7(4):808--874, 2019.

\bibitem{KX96}
G.~Kallianpur and J.~Xiong.
\newblock Large deviations for a class of stochastic partial differential equations.
\newblock {\em Ann. Probab.}, 24(1):320--345, 1996.

\bibitem{LDP3}
A.~Kumar and M.T. Mohan.
\newblock Small time asymptotics for a class of stochastic partial differential equations with fully monotone coefficients forced by multiplicative {Gaussian} noise.
\newblock {\em Stoch. Dyn.}, 25(05):2550021, 2025.

\bibitem{LDP2}
R.~Li, R.~Wang, and B.~Zhang.
\newblock A large deviation principle for the stochastic heat equation with general rough noise.
\newblock {\em J. Theor. Probab.}, 37(1):251--306, 2024.

\bibitem{LDP11}
W.~Liu.
\newblock Large deviations for stochastic evolution equations with small multiplicative noise.
\newblock {\em Appl. Math. Optim.}, 61(1):27--56, 2010.

\bibitem{LDP12}
W.~Liu, M.~Röckner, and X.~Zhu.
\newblock Large deviation principles for the stochastic quasi-geostrophic equations.
\newblock {\em Stoch. Process. Appl.}, 123(8):3299--3327, 2013.

\bibitem{LDP13}
Y.~Lv and A.J. Roberts.
\newblock Large deviation principle for singularly perturbed stochastic damped wave equations.
\newblock {\em Stoch. Anal. Appl.}, 32(1):50--60, 2014.

\bibitem{LDP14}
C.~Mo and J.~Luo.
\newblock Large deviations for stochastic differential delay equations.
\newblock {\em Nonlinear Anal. Theory Methods Appl.}, 80:202--210, 2013.

\bibitem{ondrejat}
M.~Ondrej{\'a}t.
\newblock Uniqueness for stochastic evolution equations in {B}anach spaces.
\newblock {\em Diss. Math.}, 426:1--63, 2004.

\bibitem{LDP15}
V.~Ortiz-López and M.~Sanz-Solé.
\newblock A {L}aplace principle for a stochastic wave equation in spatial dimension three.
\newblock In D.~Crisan, editor, {\em Stochastic {Analysis} 2010}, pages 31--49. Springer Berlin Heidelberg, Berlin, Heidelberg, 2011.

\bibitem{LDP23}
T.~Pan, S.~Shang, J.~Zhai, and T.~Zhang.
\newblock Large deviations of fully local monotone stochastic partial differential equations driven by gradient-dependent noise.
\newblock {\em Bernoulli}, 32(1):249--273, 2026.

\bibitem{LDP16}
J.~Ren, S.~Xu, and X.~Zhang.
\newblock Large deviations for multivalued stochastic differential equations.
\newblock {\em J. Theor. Probab.}, 23(4):1142--1156, 2010.

\bibitem{LDP17}
J.~Ren and X.~Zhang.
\newblock Freidlin–{Wentzell}'s large deviations for stochastic evolution equations.
\newblock {\em J. Funct. Anal.}, 254(12):3148--3172, 2008.

\bibitem{LDP18}
M.~Röckner, T.~Zhang, and X.~Zhang.
\newblock Large deviations for stochastic tamed {3D} {Navier}-{Stokes} equations.
\newblock {\em Appl Math Optim}, 61(2):267--285, 2010.

\bibitem{Salins19equivalences}
M.~Salins.
\newblock Equivalences and counterexamples between several definitions of the uniform large deviations principle.
\newblock {\em Probab. Surveys}, 16:99--142, 2019.

\bibitem{Salins21reacdiff}
M.~Salins.
\newblock Systems of small-noise stochastic reaction-diffusion equations satisfy a large deviations principle that is uniform over all initial data.
\newblock {\em Stoch. Process. Appl.}, 142:159--194, 2021.

\bibitem{Salins25SHE}
M.~Salins.
\newblock Solutions to the stochastic heat equation with polynomially growing multiplicative noise do not explode in the critical regime.
\newblock {\em Ann. Probab.}, 53(1):223--238, 2025.

\bibitem{SWZ2026}
S.~Shang, P.~Wang, and T.~Zhang.
\newblock {$L^2$}-solutions to stochastic reaction-diffusion equations with superlinear drifts driven by space-time white noise.
\newblock {\em Stoch. Process. Appl.}, page 104991, 2026.

\bibitem{Sowers92}
R.B. Sowers.
\newblock Large deviations for a reaction-diffusion equation with non-{G}aussian perturbations.
\newblock {\em Ann. Probab.}, 20(1):504--537, 1992.

\bibitem{LDP19}
S.S. Sritharan and P.~Sundar.
\newblock Large deviations for the two-dimensional {Navier}–{Stokes} equations with multiplicative noise.
\newblock {\em Stoch. Process. Appl.}, 116(11):1636--1659, 2006.

\bibitem{LDP20}
C.~Sun, H.~Gao, J.~Duan, and B.~Schmalfuß.
\newblock Rare events in the {Boussinesq} system with fluctuating dynamical boundary conditions.
\newblock {\em J. Differ. Equ.}, 248(6):1269--1296, 2010.

\bibitem{LDP5}
E.~Theewis and M.~Veraar.
\newblock Large deviations for stochastic evolution equations in the critical variational setting.
\newblock {\em Stoch. Process. Appl.}, 196:104898, 2026.

\bibitem{NVW15}
J.~van Neerven, M.~Veraar, and L.~Weis.
\newblock Stochastic integration in {B}anach spaces -- a survey.
\newblock In {\em Stochastic analysis: a series of lectures}, volume~68, pages 297--332. Springer, Basel, 2015.

\bibitem{LDP21}
T.~Xu and T.~Zhang.
\newblock White noise driven {SPDEs} with reflection: {Existence}, uniqueness and large deviation principles.
\newblock {\em Stoch. Process. Appl.}, 119(10):3453--3470, 2009.

\bibitem{LDP22}
J.~Zhai and T.~Zhang.
\newblock Large deviations for stochastic models of two-dimensional second grade fluids.
\newblock {\em Appl. Math. Optim.}, 75(3):471--498, 2017.

\end{thebibliography}

\end{document}